
\documentclass[A4paper, 12pt]{article} 
\usepackage[T1]{fontenc}
\usepackage[utf8]{inputenc} 
\usepackage[all,cmtip]{xy}

\usepackage[margin=0.8in, top=1in, bottom=1in, footskip=0.5in]{geometry}
\geometry{a4paper} 


\usepackage{graphicx} 


\usepackage{booktabs} 
\usepackage{array} 
\usepackage{paralist} 
\usepackage{verbatim} 
\usepackage{subfig} 
\usepackage{mathtools}
\usepackage{amssymb}
\usepackage{amsmath}
\usepackage{amsthm}
\usepackage{mathrsfs}
\usepackage{enumitem}
\usepackage[colorlinks, allcolors=blue]{hyperref}
\usepackage{appendix}
\usepackage{stmaryrd}
\usepackage{hyperref}
\usepackage[bottom]{footmisc}
\hypersetup{colorlinks,linkcolor={blue},citecolor={blue},urlcolor={red}}

\usepackage{subfiles}


\usepackage{fancyhdr} 
\pagestyle{fancy} 
\fancyhf{}
\fancyhead[ER]{\footnotesize \scshape  Title1}
\fancyhead[OL]{\footnotesize \scshape Title2}
\fancyfoot[C]{\thepage}

\usepackage[tightpage,psfixbb]{preview}
\setlength{\PreviewBorder}{1ex}

%
\usepackage{sectsty}

\sectionfont{\normalsize}
\subsectionfont{\normalfont\itshape}
\subsubsectionfont{\normalfont\itshape}
\makeatletter


\usepackage[nottoc,notlof,notlot]{tocbibind} 
\usepackage[titles,subfigure]{tocloft} 
\setcounter{tocdepth}{2}





\theoremstyle{sltheoremstyle}
\newtheorem{theorem}{Theorem}[section]
\newtheorem{lemma}[theorem]{Lemma}
\newtheorem{corollary}[theorem]{Corollary}
\newtheorem{question}[theorem]{Question}
\newtheorem{proposition}[theorem]{Proposition}

\newtheorem*{corollary-non}{Corollary}
\newtheorem*{lemma-non}{Lemma}
\newtheorem*{theorem-non}{Theorem}
\newtheorem*{proposition-non}{Proposition}
\newtheorem*{condition-non}{Condition}
\newtheorem*{conditions-non}{Conditions}

\theoremstyle{definition}

\newtheorem{remark}[theorem]{Remark}
\newtheorem{remarks}[theorem]{Remarks}
\newtheorem{definition}[theorem]{Definition}

\usepackage[backend=bibtex,style=alphabetic]{biblatex}


\newcommand{\wh}[1]{{\widehat{#1}}}


\makeatletter
\newcommand{\extp}{\@ifnextchar^\@extp{\@extp^{\,}}}
\def\@extp^#1{\mathop{\bigwedge\nolimits^{\!#1}}}
\makeatother

\newcommand*{\img}[1]{%
    \raisebox{-.0\baselineskip}{%
        \includegraphics[
        height=\baselineskip,
        width=\baselineskip,
        keepaspectratio,
        ]{#1}%
    }%
}

\newcommand{\etale}{\textnormal{\'et}}




\newcommand{\CC}{\mathbb{C}}

\newcommand{\OO}{\mathcal{O}}

\newcommand{\QQ}{\mathbb{Q}}

\newcommand{\et}{\textnormal{\'et}}

\newcommand{\ch}{\textnormal{ch}}
\newcommand{\Hdg}{\textnormal{Hdg}}
\newcommand{\GL}{\textnormal{GL}}

\newcommand{\RR}{\mathbb{R}}

\newcommand{\ZZ}{\mathbb{Z}}

\newcommand{\CH}{\textnormal{CH}}

\newcommand{\Pic}{\textnormal{Pic}}

\newcommand{\Coker}{\textnormal{Coker}}
\newcommand{\Ker}{\textnormal{Ker}}
\newcommand{\NS}{\textnormal{NS}}

\newcommand{\id}{\textnormal{id}}

\newcommand{\Gal}{\textnormal{Gal}}

\newcommand{\Spec}{\textnormal{Spec }}

\newcommand{\Sp}{\textnormal{Sp}}

\newcommand{\ca}[1]{{\mathcal{#1}}}
\newcommand{\bb}[1]{{\mathbb{#1}}}
\newcommand{\msf}[1]{{\mathsf{#1}}}
\newcommand{\mr}[1]{{\mathscr{#1}}}
\newcommand{\mf}[1]{{\mathfrak{#1}}}
\newcommand{\tn}[1]{{\textnormal{#1}}}
\let\rm\relax 
\newcommand{\rm}[1]{{\mathrm{#1}}}








\DeclareSymbolFont{bbm}{U}{bbm}{m}{n}
\DeclareSymbolFontAlphabet{\mathbbm}{bbm}


\addbibresource{references}


\title{\textsc{\Large{Integral Fourier transforms and the integral Hodge conjecture for one-cycles on abelian varieties}}}
\author{Thorsten Beckmann $\quad \& \quad$ Olivier de Gaay Fortman}
\date{\small{\today}} 


\begin{document}

\newgeometry{left=0.6in, right=0.6in, top=0.8in, bottom=0.8in}

\pagestyle{plain}

\maketitle 

\abstract{\centering{\footnotesize{\noindent
We prove the integral Hodge conjecture for one-cycles on a principally polarized complex abelian variety whose minimal class is algebraic. In particular, the Jacobian of a smooth projective curve over the complex numbers satisfies the integral Hodge conjecture for one-cycles. The main ingredient is a lift of the Fourier transform to integral Chow groups. Similarly, we prove the integral Tate conjecture for one-cycles on the Jacobian of a smooth projective curve over the separable closure of a finitely generated field. Furthermore, abelian varieties satisfying such a conjecture are dense in their moduli space.
} 
}
}

\section{Introduction}
\label{introduction}


Let $g$ be a positive integer and let $A$ be an abelian variety of dimension $g$ over a field $k$ with dual abelian variety $\wh A$. The correspondence attached to the Poincar\'e bundle $\ca P_A$ on $A \times \wh A$ defines a powerful duality between the derived categories, rational Chow groups and cohomology of $A$ and $\wh A$ \cite{mukaiduality,beauvillefourier,huybrechtsfouriermukai}.
We shall refer to such morphisms as \textit{Fourier transforms}.

On the level of cohomology, the Fourier transform preserves integral $\ell$-adic \'etale cohomology when $k = k_s$ and integral Betti cohomology when $k = \CC$. It is thus natural to ask whether the Fourier transform on rational Chow groups preserves integral cycles modulo torsion or, more generally, lifts to a homomorphism between integral Chow groups. This question was raised by Moonen--Polishchuk \cite{moonenpolishchuk} and Totaro \cite{totaroIHCthreefolds}. More precisely, Moonen and Polishchuk gave a counterexample for abelian varieties over non-closed fields and asked about the case of algebraically closed fields.  

In this paper we further investigate this question with a view towards applications concerning the integral Hodge conjecture for one-cycles when $A$ is defined over $\CC$. To state our main result, we recall that whenever $\iota \colon C \hookrightarrow A$ is a smooth curve, the image of the fundamental class under the pushforward map $\iota_\ast \colon \rm H_{2}(C, \ZZ) \to \rm  H_{2}(A,\ZZ) \cong \rm  H^{2g-2}(A, \ZZ)$ defines a cohomology class $[C] \in \rm H^{2g-2}(A, \ZZ)$. This construction extends to one-cycles and factors modulo rational equivalence. As such, it induces a canonical homomorphism, called the \textit{cycle class map},
\[
cl\colon \CH_1(A) \to \textnormal{Hdg}^{2g-2}(A, \ZZ),
\]
which is a direct summand of a natural graded ring homomorphism $cl\colon \CH(A) \to \rm H^\bullet(A, \ZZ)$. 

The liftability of the Fourier transform turns out to have important consequences for the image of the cycle class map. Recall that an element $\alpha \in \rm H^{\bullet}(A, \ZZ)$ is called \textit{algebraic} if it is in the image of $cl$, and that $A$ satisfies the \textit{integral Hodge conjecture for $k$-cycles} if all Hodge classes in $\rm H^{2g-2k}(A,\ZZ)$ are algebraic. Although the integral Hodge conjecture fails in general \cite{atiyahintegralhodge, trento,totarocobordism}, it is an open question for abelian varieties. Our main result is as follows.

\newpage


\begin{theorem} \label{maintheorem}
Let $A$ be a complex abelian variety of dimension $g$ with Poincar\'e bundle $\ca P_A$. The following three statements are equivalent:
\begin{enumerate}[wide, labelwidth=!, labelindent=0pt]
    \item \label{introitem:minimalpoincare} The cohomology class $c_1(\ca P_A)^{2g-1}/(2g-1)! \in \rm H^{4g-2}(A \times \wh A, \ZZ)$ is algebraic. 
    \item \label{introitem:integralpoincare} The Chern character $\ch(\ca P_A) = \exp(c_1(\ca P_A)) \in \rm H^\bullet(A \times \wh A, \ZZ)$ is algebraic.
    \item \label{introitem:integralhodgeforproduct} The integral Hodge conjecture for one-cycles holds for $A \times \wh A$. 
\end{enumerate}
Any of these statements implies that
\begin{enumerate}[wide, labelwidth=!, labelindent=0pt]
\setcounter{enumi}{3}
    \item \label{introitem:IHC} The integral Hodge conjecture for one-cycles holds for $A$ and $\wh A$. 
\end{enumerate}
Suppose that $A$ is principally polarized by $\theta \in \Hdg^2(A,\ZZ)$ 
and consider the following statements:
\begin{enumerate}[wide, labelwidth=!, labelindent=0pt]
\setcounter{enumi}{4}
\item \label{introitem:minimalclass} The minimal cohomology class $\gamma_\theta \coloneqq \theta^{g-1}/(g-1)! \in \rm H^{2g-2}(A, \ZZ)$ is algebraic. 
\item \label{introitem:minimalpoincare2} The cohomology class $c_1(\ca P_A)^{2g-2}/(2g-2)! \in \rm H^{4g-4}(A \times \wh A, \ZZ)$ is algebraic. 
\item \label{introitem:last} For every algebraic cohomology class $\alpha \in \rm H^{> 0}(A, \ZZ)$ and every $i \in \ZZ_{\geq 1}$, the cohomology class $\alpha^i/i! \in \rm H^{\bullet}(A, \ZZ)$ is algebraic.  
\end{enumerate}
Then statements $\ref{introitem:minimalpoincare} - \ref{introitem:last}$ are equivalent. 
\end{theorem}
\noindent
Remark that Condition~\ref{introitem:minimalclass} is stable under products, so a product of principally polarized abelian varieties satisfies the integral Hodge conjecture for one-cycles if and only if each of the factors does. More importantly, if $J(C)$ is the Jacobian of a smooth projective curve $C$ of genus $g$, then 
every integral Hodge class of degree $2g-2$ on $J(C)$ is a $\ZZ$-linear combination of curves classes:

\begin{theorem} \label{introth:IHCforjacobians}
Let $C_1, \dotsc ,C_n$ be smooth projective curves over $\CC$. Then the integral Hodge conjecture for one-cycles holds for the product of Jacobians $J(C_1) \times \cdots \times J(C_n)$. 
\end{theorem}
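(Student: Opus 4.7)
The plan is to reduce the statement to a single Jacobian and then apply Theorem \ref{maintheorem}.

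The first step is to invoke the remark immediately following Theorem \ref{maintheorem}: since condition \ref{introitem:minimalclass} is stable under products of principally polarized abelian varieties, so is the integral Hodge conjecture for one-cycles. Hence it suffices to establish the statement when $n = 1$, i.e., for a single Jacobian $J(C)$ of a smooth projective curve $C$ of genus $g$, equipped with its canonical principal polarization $\theta$.

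The key remaining input is the algebraicity of the minimal cohomology class
\[
\gamma_\theta = \frac{\theta^{g-1}}{(g-1)!} \in \rm H^{2g-2}(J(C), \ZZ).
\]
This is a classical consequence of Poincaré's formula: after fixing a base point $c_0 \in C$, the Abel–Jacobi embedding $\iota \colon C \hookrightarrow J(C)$, $c \mapsto [\OO_C(c - c_0)]$, satisfies $\iota_\ast[C] = \gamma_\theta$ in integral cohomology. In particular, $\gamma_\theta$ lies in the image of the cycle class map.

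Applying the implication \ref{introitem:minimalclass} $\Rightarrow$ \ref{introitem:IHC} of Theorem \ref{maintheorem} then yields the integral Hodge conjecture for one-cycles on $J(C)$, and combined with the product reduction this finishes the proof. Since Theorem \ref{maintheorem} does all of the substantive work, there is no real obstacle to this deduction; the only ingredient needed from outside that theorem is the classical algebraicity of the minimal class on a Jacobian via Poincaré's formula.
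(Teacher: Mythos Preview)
Your proof is correct and follows essentially the same route as the paper: reduce to a single factor via the stability of condition \ref{introitem:minimalclass} under products (the paper packages this as Corollary \ref{cor:IHCforproducts}), then invoke Poincar\'e's formula to get algebraicity of $\gamma_\theta$ on $J(C)$ and apply Theorem \ref{maintheorem}.
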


\noindent
See Remark \ref{symmetricpowerremark}.\ref{symmetricremarkone} for another approach towards Theorem \ref{introth:IHCforjacobians} in the case $n=1$. A second consequence of Theorem \ref{maintheorem} is that the integral Hodge conjecture for one-cycles on principally polarized complex abelian varieties is stable under specialization, see Corollary \ref{complexspecialization}. An application of somewhat different nature is the following density result, proven in Section \ref{subsec:density}:

\begin{theorem} \label{introth:density}
Let $\delta = (\delta_1, \dotsc, \delta_g)$ be positive integers such that $\delta_i | \delta_{i+1}$ and let $\msf A_{g,\delta}(\CC)$ be the coarse moduli space of polarized abelian varieties over $\CC$ with polarization type $\delta$. There is a countable union $X\subset \msf A_{g,\delta}(\CC)$ of closed algebraic subvarieties of dimension at least $g$, that satisfies the following property: $X$ is dense in the analytic topology and the integral Hodge conjecture for one-cycles holds for those polarized abelian varieties whose isomorphism class lies in $X$. 
\end{theorem}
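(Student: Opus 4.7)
The plan is to build $X$ as a countable union of Hecke translates, inside $\msf A_{g,\delta}(\CC)$, of the Jacobian locus, and to verify the integral Hodge conjecture for one-cycles on each such translate by combining Theorem~\ref{introth:IHCforjacobians} with an isogeny-invariance statement for the algebraicity condition on $\ch(\ca P_A)$ appearing in Theorem~\ref{maintheorem}. For $g = 1$ every elliptic curve is its own Jacobian, so one may take $X = \msf A_{1,\delta}(\CC)$ directly; assume $g \geq 2$ from now on.

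First I set up the Hecke correspondences. Let $\Gamma_\delta \subset \GSp_{2g}(\QQ)$ be the arithmetic subgroup uniformizing $\msf A_{g,\delta}(\CC)$ via the Siegel upper half-space, and similarly $\Gamma_1$ for the principal polarization. For each double coset $\Gamma_\delta \alpha \Gamma_1$ with $\alpha \in \GSp_{2g}(\QQ)$ there is an algebraic correspondence
\[
Z_\alpha \subset \msf A_{g,\delta}(\CC) \times \msf A_{g,1}(\CC)
\]
with generically finite projections, parametrizing pairs of polarized abelian varieties related by an isogeny of the prescribed type. Let $\msf J_g \subset \msf A_{g,1}(\CC)$ denote the Jacobian locus, of dimension $3g - 3$, and define $X_\alpha \subset \msf A_{g,\delta}(\CC)$ as the image of $Z_\alpha \cap (\msf A_{g,\delta}(\CC) \times \msf J_g)$ under the first projection. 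Each $X_\alpha$ is a closed algebraic subvariety of dimension $3g - 3 \geq g$, and I set $X \coloneqq \bigcup_\alpha X_\alpha$, a countable union of the required type.

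I then show that every $[A, L] \in X$ satisfies the integral Hodge conjecture for one-cycles. By construction $(A, L)$ admits a polarized isogeny from a Jacobian, say $\pi \colon J \to A$, with dual isogeny $\wh\pi \colon \wh A \to \wh J$. The Poincar\'e bundles are related by the standard identity $(\pi \times \id)^\ast \ca P_A \cong (\id \times \wh\pi)^\ast \ca P_J$ on $J \times \wh A$. By Theorem~\ref{introth:IHCforjacobians}, $J \times \wh J$ satisfies the integral Hodge conjecture for one-cycles, so Theorem~\ref{maintheorem} yields that $\ch(\ca P_J)$ is algebraic; combining this identity with pullback and pushforward along $\pi \times \id$ and $\id \times \wh\pi$, together with the lift of the Fourier transform to integral Chow groups developed earlier in the paper, algebraicity transfers to $\ch(\ca P_A)$. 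Applying Theorem~\ref{maintheorem} in reverse then yields the integral Hodge conjecture for one-cycles on $A \times \wh A$, hence on $A$.

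Finally, density of $X$ reduces to density of Hecke orbits. I pick a very general Jacobian $[J_0] \in \msf J_g$ whose Mumford--Tate group equals $\GSp_{2g}$; such points form the complement of a countable union of proper subvarieties of $\msf J_g$, and are therefore abundant. By the standard fact that the $\GSp_{2g}(\QQ)$-orbit of a point with full Mumford--Tate group is dense in the corresponding Shimura variety, the Hecke orbit of $[J_0]$ is dense in $\msf A_{g,\delta}(\CC)$, and this orbit is contained in $X$. The main obstacle I foresee is the isogeny-invariance step of the previous paragraph: a na\"\i ve application of pullback and pushforward only transfers algebraicity of the rationalized Chern character up to a factor of $\deg \pi$, so preserving the integral class genuinely requires the integral Fourier transform rather than its classical rational counterpart.
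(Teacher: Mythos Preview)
Your isogeny-invariance step is a genuine gap, and the appeal to ``the lift of the Fourier transform to integral Chow groups developed earlier in the paper'' does not close it. From $(\pi\times\id)^\ast\ca P_A\cong(\id\times\wh\pi)^\ast\ca P_J$ you correctly get that $(\pi\times\id)^\ast\ch(\ca P_A)$ is algebraic on $J\times\wh A$, but pushing forward along $\pi\times\id$ only yields algebraicity of $\deg(\pi)\cdot\ch(\ca P_A)$ on $A\times\wh A$, exactly as you feared. The integral Fourier transform the paper constructs is a transform \emph{for $J$}, a homomorphism $\CH(J)\to\CH(\wh J)$; it does not furnish a way to descend integral classes along an arbitrary isogeny $J\to A$ without picking up the degree. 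Invoking an integral Fourier transform \emph{for $A$} would be circular, since by Theorem~\ref{th:motivic} and Proposition~\ref{prop:motivicetale-new} its existence is equivalent to the very algebraicity of $\ch(\ca P_A)$ you are trying to establish.

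The paper's remedy is to control the prime factors of $\deg(\pi)$. One fixes a prime $\ell>(2g-1)!$ and restricts to Hecke correspondences coming from $\Sp_{2g}^\delta(\ZZ[1/\ell])$, so that every isogeny in play has degree a power of $\ell$. Then the pushforward argument gives algebraicity of $\ell^{N}\cdot\rho_A$, while $(2g-1)!\cdot\rho_A=c_1(\ca P_A)^{2g-1}$ is trivially algebraic; since $\gcd(\ell^{N},(2g-1)!)=1$, a B\'ezout combination yields algebraicity of $\rho_A$ itself (this is the content of Proposition~\ref{proposition:IHCONELEMMA}). Density of this restricted Hecke orbit then requires strong approximation for $\Sp_{2g}$ at $\ell$, rather than the easier real approximation implicit in your $\GSp_{2g}(\QQ)$-orbit argument. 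Incidentally, the paper uses the $g$-dimensional locus of products of elliptic curves (which embeds in $\msf A_{g,\delta}$ for every $\delta$) rather than the Jacobian locus; your choice of $\msf J_g$ would give the larger dimension $3g-3$, but only after passing through a Hecke correspondence to $\msf A_{g,1}$, cf.\ Remark~\ref{rem:dimensionimprovement}.
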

\begin{remark}
The lower bound that we obtain on the dimension of the components of $X$ actually depends on $\delta$ and is often greater than $g$. For instance, when $\delta = 1$ and $g\geq 2$, there is a set $X$ as in the theorem, whose elements are prime-power isogenous to products of Jacobians of curves. Therefore, the components of $X$ have dimension $3g-3$ in this case, c.f. Remark \ref{rem:dimensionimprovement}.
\end{remark}


One could compare Theorem \ref{maintheorem} with the following statement, proven by Grabowski \cite{grabowski}: if $g$ is a positive integer such that the minimal cohomology class $\gamma_\theta = \theta^{g-1}/(g-1)!$ of every principally polarized abelian variety of dimension $g$ is algebraic, then every abelian variety of dimension $g$ satisfies the integral Hodge conjecture for one-cycles. In this way, he proved the integral Hodge conjecture for abelian threefolds, a result which has been extended to smooth projective threefolds $X$ with $K_X = 0$ by Voisin and Totaro \cite{voisinIHCuniruled,totaroIHCthreefolds}. For abelian varieties of dimension greater than three, not many unconditional statements seem to have been known. 
\\
\\
The idea behind the proof of Theorem \ref{maintheorem} is the following. Let $A$ be a complex abelian variety of dimension $g$ and let $i \geq 0$ be an integer. Then Poincar\'e duality induces a canonical isomorphism $\varphi\colon \rm H^{2i}(A, \ZZ) \cong \rm H^{2g-2i}(A, \ZZ)^\vee \cong \rm H^{2g-2i}(\wh A, \ZZ)$. The map $\varphi$ respects the Hodge structures and thus induces an isomorphism $\Hdg^{2i}(A, \ZZ) \cong \Hdg^{2g-2i}(\wh A, \ZZ)$. However, it is unclear a priori whether $\varphi$ sends algebraic classes to algebraic classes. We prove that 
the algebraicity of $c_1(\ca P_A)^{2g-1}/(2g-1)! $ forces $\varphi$ to be algebraic, i.e.\ to be induced by a correspondence $\Gamma \in \CH(A \times \wh A)$. In particular, one then has $\rm Z^{2i}(A) \coloneqq \Hdg^{2i}(A, \ZZ) / \rm H^{2i}(A, \ZZ)_{\textnormal{alg}} \cong \rm Z^{2g-2i}(\wh A)$. To prove this, we lift the cohomological Fourier transform to a homomorphism between integral Chow groups whenever $c_1(\ca P_A)^{2g-1}/(2g-1)!$ is algebraic. For this we use a theorem of Moonen--Polishchuk saying that the ideal of positive dimensional cycles in the Chow ring with Pontryagin product of an abelian variety admits a divided power structure \cite[Theorem 1.6]{moonenpolishchuk}.
\\
\\
In Section \ref{sec:integralhodgeuptofactorn}, we consider an abelian variety $A_{/\CC}$ of dimension $g$ and ask: if $n \in \ZZ_{\geq 1}$ is such that $n \cdot c_1(\ca P_A)^{2g-1}/(2g-1)! \in \rm H^{4g-2}(A \times \wh A, \ZZ)_{\textnormal{alg}}$, can we bound the order of $\rm Z^{2g-2}(A)$ in terms of $g$ and $n$? For a smooth complex projective $d$-dimensional variety $X$, $\rm Z^{2d-2}(X)$ is called the degree $2d-2$ \textit{Voisin group} of $X$ \cite{perry2020integral}, is a stably birational invariant \cite[Lemma 2.20]{voisinstablyrational}, and related to the unramified cohomology groups by Colliot-Thélène--Voisin and Schreieder \cite{colliotthelenevoisin, schreieder2021refined}. 
We prove that if $n \cdot c_1(\ca P_A)^{2g-1}/(2g-1)!$ is algebraic, then $\gcd( n^2, (2g-2)!) \cdot \rm Z^{2g-2}(A) = (0)$. In particular, $(2g-2)! \cdot \rm Z^{2g-2}(A) = (0)$ for any $g$-dimensional complex abelian variety $A$.
Moreover, if $A$ is principally polarized by $\theta \in \textnormal{NS}(A)$ and if $n \cdot \gamma_\theta \in \rm H^{2g-2}(A,\ZZ)$ is algebraic, then $n \cdot c_1(\ca P_A)^{2g-1}/(2g-1)!$ is algebraic. Since it is well known that for Prym varieties, the Hodge class $2 \cdot \gamma_\theta$ is algebraic, these observations lead to the following result (see also Theorem~\ref{theorem:integralhodgeuptofactor}).
\begin{theorem} \label{introtheorem:prym}
Let $A$ be a $g$-dimensional Prym variety over $\CC$. Then $4 \cdot \rm Z^{2g-2}(A) = (0)$. 
\end{theorem}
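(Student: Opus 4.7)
The plan is to combine the two quantitative statements flagged in the paragraph just above the theorem with the classical algebraicity of twice the minimal class on a Prym variety, and then specialise to $n = 2$.

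First, I would invoke the classical fact, recalled in the discussion preceding the theorem, that on any $g$-dimensional Prym variety $(A,\theta)$ the Hodge class $2\,\gamma_\theta = 2\,\theta^{g-1}/(g-1)! \in \h^{2g-2}(A,\ZZ)$ is algebraic. Concretely, writing $A$ as the Prym of an \'etale double cover $\pi \colon \widetilde{C} \to C$ of smooth projective curves, the Abel--Prym morphism $f \colon \widetilde{C} \to A$, obtained by composing an Abel--Jacobi embedding $\widetilde{C} \hookrightarrow J(\widetilde{C})$ with the projection $J(\widetilde{C}) \to A$, satisfies $f_*[\widetilde{C}] = 2\,\gamma_\theta$. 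Hence the value $n = 2$ is admissible for $A$.

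Next, I would chain the two quantitative statements recalled just above the theorem. The first says that the algebraicity of $n\,\gamma_\theta$ forces the algebraicity of $n \cdot c_1(\ca P_A)^{2g-1}/(2g-1)! \in \h^{4g-2}(A \times \wh A, \ZZ)$; applied with $n = 2$ this yields the algebraicity of $2 \cdot c_1(\ca P_A)^{2g-1}/(2g-1)!$. The second result then produces
\[
\gcd\!\bigl(n^2,\, (2g-2)!\bigr) \cdot \rm Z^{2g-2}(A) = 0.
\]
Substituting $n = 2$ and noting that $\gcd(4,(2g-2)!)$ divides $4$, we conclude $4 \cdot \rm Z^{2g-2}(A) = 0$, as desired.

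The main obstacle is not this short Prym-specific deduction, which is a one-line application of the material recalled above, but rather the quantitative refinement $\gcd(n^2,(2g-2)!) \cdot \rm Z^{2g-2}(A) = 0$ that powers the argument. Its proof requires carefully tracking the denominators arising when one lifts the cohomological Fourier transform to a homomorphism of integral Chow groups on $A \times \wh A$, and exploiting the Moonen--Polishchuk divided-power structure on the Pontryagin ideal of positive-dimensional cycles; those ingredients are developed earlier in the paper and are used here as a black box.
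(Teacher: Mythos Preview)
Your proposal is correct and follows essentially the same approach as the paper: the paper's proof (given as part~\ref{item:ppavnphodgealg-last} of Theorem~\ref{theorem:integralhodgeuptofactor}) likewise chains the implication $n\cdot\gamma_\theta$ algebraic $\Rightarrow$ $n\cdot\rho_A$ algebraic with the bound $\gcd(n^2,(2g-2)!)\cdot \rm Z^{2g-2}(A)=0$, and then specialises to $n=2$ using the well-known algebraicity of $2\gamma_\theta$ for Prym varieties.
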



For the study of the liftability of the Fourier transform, which was initiated by Moonen and Polishchuk in \cite{moonenpolishchuk}, it is more natural to consider abelian varieties defined over arbitrary fields. For this reason we define and study integral Fourier transforms in this generality, see Section \ref{sec:two}. We provide, for an abelian variety principally polarized by a symmetric ample line bundle, necessary and sufficient conditions for an integral Fourier transform to exist, see Theorem \ref{th:motivic}. 

This generality also allows to obtain the analogue of Theorem \ref{maintheorem} over the separable closure $k$ of a finitely generated field. Recall that a smooth projective variety $X$ of dimension $d$ over $k$ satisfies the \textit{integral Tate conjecture for one-cycles over $k$} if, for every prime number $\ell$ different from $\textnormal{char}(k)$ and for some finitely generated field of definition $k_0 \subset k$ of $X$, the cycle class map 
\begin{equation} \label{eq:integraltateconjecture}
    cl\colon \CH_1(X)_{\ZZ_\ell} = \CH_1(X)\otimes_\ZZ{\ZZ_\ell} \to \bigcup_U\rm H_{\textnormal{\'{e}t}}^{2d-2}(X, \ZZ_\ell(d-1))^U
\end{equation}
is surjective, where $U$ ranges over the open subgroups of $\Gal(k/k_0)$. 

\begin{theorem} \label{introth:integraltate} Let $A$ be an abelian variety of dimension $g$ over the separable closure $k$ of a finitely generated field. The following assertions are true:
\begin{enumerate}[wide, labelwidth=!, labelindent=0pt]
\item 
The abelian variety $A$ satisfies the integral Tate conjecture for one-cycles over $k$ if the cohomology class $$c_1(\ca P_A)^{2g-1}/(2g-1)! \in \rm H_{\textnormal{\'et}}^{4g-2}(A \times \wh A, \ZZ_\ell(2g-1))$$ is the class of a one-cycle with $\ZZ_\ell$-coefficients for every prime number $\ell < (2g-1)!$ unequal to $\textnormal{char}(k)$. 
    \item 
Suppose that $A$ is principally polarized and let $\theta_\ell \in \rm H^2_{\textnormal{\'et}}(A, \ZZ_\ell(1))$ be the class of the polarization. The map (\ref{eq:integraltateconjecture}) is surjective if $\gamma_{\theta_\ell} \coloneqq  \theta_\ell^{g-1}/(g-1)! \in \rm H_{\textnormal{\'{e}t}}^{2g-2}(A, \ZZ_\ell(g-1))$ is in its image. In particular, if $\ell > (g-1)!$ then this always holds. Thus $A$ satisfies the integral Tate conjecture for one-cycles if $\gamma_{\theta_\ell}$ is in the image of (\ref{eq:integraltateconjecture}) for every prime number $\ell < (g-1)!$ unequal to $\textnormal{char}(k)$. 
\end{enumerate}
\end{theorem}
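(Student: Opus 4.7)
The plan is to prove the theorem prime-by-prime, establishing for each prime $\ell \neq \charac(k)$ an $\ell$-adic analogue of the implication Theorem \ref{maintheorem}(\ref{introitem:minimalpoincare})$\Rightarrow$(\ref{introitem:IHC}), whose proof runs parallel to the complex case but uses $\ZZ_\ell$-coefficient \'etale cohomology in place of Betti cohomology. For the first assertion, fix a prime $\ell \neq \charac(k)$. If $\ell \geq (2g-1)!$, then every prime factor of $(2g-1)!$ is at most $2g-1$, hence strictly less than $\ell$, so $(2g-1)!$ is a unit in $\ZZ_\ell$; thus $c_1(\ca P_A)^{2g-1}/(2g-1)! = ((2g-1)!)^{-1} c_1(\ca P_A)^{2g-1}$ is automatically the class of a $\ZZ_\ell$-algebraic cycle, and the hypothesis holds trivially at $\ell$. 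So only primes $\ell < (2g-1)!$ require the assumption, and for each such $\ell$ I would proceed in two steps. First, upgrade $\ZZ_\ell$-algebraicity of the minimal Poincar\'e class to $\ZZ_\ell$-algebraicity of the full Chern character $\ch(\ca P_A)$ via the $\ell$-adic analogue of (\ref{introitem:minimalpoincare})$\Rightarrow$(\ref{introitem:integralpoincare}) of Theorem \ref{maintheorem}; this uses the Moonen--Polishchuk divided power structure on the Pontryagin ideal of positive dimensional cycles, whose construction is integral and therefore compatible with $\ZZ_\ell$-coefficients. Second, the resulting algebraic Chern character defines an $\ell$-adic integral Fourier transform $\CH_\ast(A)_{\ZZ_\ell} \to \CH_\ast(\wh A)_{\ZZ_\ell}$ lifting the cohomological one; composed with Poincar\'e duality on $\wh A$, it produces an algebraic correspondence inducing a Galois-equivariant isomorphism $\rm H^{2g-2}_\et(A, \ZZ_\ell(g-1)) \cong \rm H^2_\et(\wh A, \ZZ_\ell(1))$. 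Since the right hand side is integrally generated by classes of divisors on $\wh A$---by the (integral) Tate conjecture for divisors on abelian varieties over finitely generated fields, via Kummer theory---every Tate one-cycle class on $A$ is the image under this algebraic correspondence of a $\ZZ_\ell$-algebraic divisor class on $\wh A$, and hence is itself $\ZZ_\ell$-algebraic.

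For the second assertion, let $\theta$ be a principal polarization inducing $\varphi_\theta\colon A \xrightarrow{\sim} \wh A$. Then $(\id \times \varphi_\theta)^\ast c_1(\ca P_A) = m^\ast \theta - p_1^\ast \theta - p_2^\ast \theta$ on $A \times A$, where $m$ denotes multiplication and $p_1, p_2$ the projections. Expanding the $(2g-1)$-th power by the multinomial theorem and using $\theta^g = g! \cdot [\pt]$, the minimal Poincar\'e class is expressible as an integral polynomial in pullbacks of $\gamma_{\theta_\ell}$. Hence $\ZZ_\ell$-algebraicity of $\gamma_{\theta_\ell}$ implies $\ZZ_\ell$-algebraicity of the minimal Poincar\'e class, and the first assertion yields surjectivity of the cycle class map at $\ell$. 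For $\ell > (g-1)!$, the factor $(g-1)!$ is a unit in $\ZZ_\ell$ by the same reasoning as above, so $\gamma_{\theta_\ell} = ((g-1)!)^{-1} \theta_\ell^{g-1}$ is automatically $\ZZ_\ell$-algebraic, yielding the final claim.

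The main obstacle is step one of the first assertion: transferring the divided-power machinery and the construction of the integral Fourier transform from the complex Betti setting to the $\ZZ_\ell$-adic \'etale setting while maintaining Galois-equivariance, so that the resulting algebraic correspondence descends to the finitely generated subfield $k_0 \subset k$ over which $A$ is defined. A secondary subtlety is invoking a sufficiently integral form of the Tate conjecture for divisors on $\wh A$ over $k_0$, but this is standard.
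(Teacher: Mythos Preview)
Your overall strategy is correct and essentially matches the paper's proof: work prime by prime, use the Moonen--Polishchuk PD-structure (extended to $\ZZ_\ell$-coefficients by flatness of $\ZZ \to \ZZ_\ell$, which is exactly Proposition~\ref{remarketalebetti}.\ref{remark3.12.2}) to pass from algebraicity of the minimal Poincar\'e class to algebraicity of $\ch(\ca P_A)$, then combine the resulting $\ell$-adic integral Fourier transform with the integral Tate conjecture for divisors (Tate, Faltings, Zarhin plus Totaro's lemma).

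There is, however, a gap in your argument for the second bullet. The multinomial expansion of $(m^\ast\theta - p_1^\ast\theta - p_2^\ast\theta)^{2g-1}/(2g-1)!$ produces terms of the form $m^\ast(\theta^i/i!)\cdot p_1^\ast(\theta^j/j!)\cdot p_2^\ast(\theta^k/k!)$ with $i+j+k=2g-1$ and $i,j,k\leq g$; this involves \emph{all} the $\theta^a/a!$ for $0\leq a\leq g$, not only $\gamma_{\theta_\ell}=\theta^{g-1}/(g-1)!$ and $\theta^g/g!=[\pt]$. For instance the term with $(i,j,k)=(g,1,g-2)$ contains $\theta^{g-2}/(g-2)!$. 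So your expansion does not by itself reduce to algebraicity of $\gamma_{\theta_\ell}$. The paper avoids this via Lemma~\ref{lemma:minimalclasspoincarecomparison}, which proves (by a Fourier-transform computation, not by direct expansion) the identity
\[
(-1)^{g+1}\cdot \mr R_A \;=\; j_{1,\ast}(\Gamma_\Theta) + j_{2,\ast}(\Gamma_{\wh\Theta}) - (\id,\lambda)_\ast(\Gamma_\Theta)
\]
in $\CH_1(A\times\wh A)_\QQ$, expressing $\rho_A$ purely in terms of pushforwards of $\Gamma_\Theta$. Alternatively you could first argue that all $\theta^i/i!$ are $\ZZ_\ell$-algebraic using Beauville's relation $\theta^i/i! = \gamma_\theta^{\star(g-i)}/(g-i)!$ and the PD-structure, but you should make that step explicit.

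Your ``main obstacle'' is a non-issue. The algebraic correspondence $\Gamma\in\CH(A\times\wh A)_{\ZZ_\ell}$ lives over $k$ and need not descend to $k_0$. What matters is only that its cycle class equals $\ch(\ca P_A)$, and $\ch(\ca P_A)$ is Galois-invariant because $\ca P_A$ is defined over $k_0$; hence the induced map on \'etale cohomology is $\mr F_A$, which is Galois-equivariant and therefore carries Tate classes to Tate classes. No descent of $\Gamma$ itself is required.
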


Theorem \ref{introth:integraltate} implies that products of Jacobians of smooth projective curves over $k$ satisfy the integral Tate conjecture for one-cycles over $k$. Moreover, for a principally polarized abelian variety $A_K$ over a number field $K \subset \CC$, the integral Hodge conjecture for one-cycles on $A_\CC$ is equivalent to the integral Tate conjecture for one-cycles on $A_{\bar K}$ (Corollary \ref{hodgetatecomparison}), which in turn implies the integral Tate conjecture for one-cycles on the geometric special fiber $A_{\overline{k(\mf p})}$ of its N\'eron model of $A/\OO_K$ for any prime $\mf p \subset \OO_K$ at which $A_K$ has good reduction (Corollary \ref{cor:specialization}). 

Finally, Theorem \ref{introth:density} has an analogue in positive characteristic. The definition for a smooth projective variety over the algebraic closure $k$ of a finitely generated field to satisfy the \textit{integral Tate conjecture for one-cycles over $k$} is analogous to the definition above (see e.g.\ \cite{charlespirutka}). 
\begin{theorem} \label{th:chinglichai}
Let $k$ be the algebraic closure of a finitely generated field of characteristic $p>0$. 
Let $\msf A_{g}$ be the coarse moduli space over $k$ of principally polarized abelian varieties of dimension $g$ over $k$. The subset of $\msf A_{g}(k)$ of isomorphism classes of principally polarized abelian varieties over $k$ that satisfy the integral Tate conjecture for one-cycles over $ k$ is Zariski dense in $\msf A_{g}$. 
\end{theorem}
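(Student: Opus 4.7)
The plan is to combine two ingredients: first, Theorem \ref{introth:integraltate} implies that every Jacobian of a smooth projective curve over $k$ satisfies the integral Tate conjecture for one-cycles, because the minimal class $\gamma_{\theta_\ell}\in \rm H^{2g-2}_\et(J(C),\ZZ_\ell(g-1))$ is realised for every prime $\ell\neq p$ by the $\ZZ_\ell$-cycle class of the Abel--Jacobi image of $C$ (Poincar\'e's formula); second, in positive characteristic, the Hecke orbit of an ordinary point of $\msf A_g$ is Zariski dense, by a theorem of Chai.

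I would fix a prime $\ell_0\neq p$ with $\ell_0>(g-1)!$ and pick a smooth projective curve $C_0$ of genus $g$ over $k$ whose Jacobian is ordinary; such curves exist because the ordinary locus is nonempty open in $\msf M_g\otimes k$. By Chai's density theorem applied to the ordinary point $[J(C_0)]\in\msf A_g(k)$, the Hecke orbit of $[J(C_0)]$ at $\ell_0$ is Zariski dense in $\msf A_g$. It then suffices to show every PPAV $(A,\theta_A)$ in this orbit satisfies the integral Tate conjecture for one-cycles. By construction $(A,\theta_A)$ is linked to $(J(C_0),\theta)$ by a polarised isogeny $\varphi\colon J(C_0)\to A$ of $\ell_0$-power degree, with $\varphi^\ast\theta_A$ a power-of-$\ell_0$ multiple of $\theta$. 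For each prime $\ell<(g-1)!$ different from $p$ (and hence distinct from $\ell_0$), the degree of $\varphi$ is a unit in $\ZZ_\ell$, so $\varphi^\ast$ is an isomorphism on $\rm H^\bullet_\et(-,\ZZ_\ell)$ and on $\CH_\bullet(-)_{\ZZ_\ell}$ that intertwines the cycle class maps and carries $\gamma_{(\theta_A)_\ell}$ to a $\ZZ_\ell^\times$-multiple of $\gamma_{\theta_\ell}$. Algebraicity of the latter on $J(C_0)$ thus forces algebraicity of the former on $A$; at $\ell=\ell_0$ itself, algebraicity holds automatically by Theorem \ref{introth:integraltate} since $\ell_0>(g-1)!$. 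The criterion of Theorem \ref{introth:integraltate} is therefore verified at every prime $\ell<(g-1)!$ with $\ell\neq p$, and the conclusion follows.

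The main obstacle is the density input. Chai's theorem that the Hecke orbit of an ordinary point is Zariski dense in $\msf A_g$ is the deep ingredient, and this phenomenon is specific to positive characteristic: the analogous statement fails over $\CC$ for $g\geq 4$, which is precisely why Theorem \ref{introth:density} only produces a countable dense union of positive-dimensional algebraic subvarieties rather than full Zariski density. Once this density is granted, the rest is a bookkeeping argument transferring algebraicity of the minimal class along prime-to-$\ell$ isogenies, leveraging the freedom to pick $\ell_0>(g-1)!$ so that the one prime at which $\varphi^\ast$ may fail to be a $\ZZ_\ell$-isomorphism is exactly the prime at which Theorem \ref{introth:integraltate} supplies algebraicity for free.
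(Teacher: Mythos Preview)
Your approach is essentially the paper's: choose an ordinary base point, invoke Chai's theorem that its Hecke orbit at a fixed prime $q>(g-1)!$ (your $\ell_0$) is Zariski dense, and transfer algebraicity of the minimal class along the resulting $q$-power isogenies so as to verify the criterion of Theorem~\ref{introth:integraltate} at every prime $\ell<(g-1)!$ with $\ell\neq p$. The only variation is the base point: the paper takes a product $E_1\times\cdots\times E_g$ of ordinary elliptic curves rather than the Jacobian of an ordinary curve. Both choices work; the paper's is marginally cheaper because the existence of ordinary elliptic curves is immediate, whereas nonemptiness of the ordinary locus in $\msf M_g$ is a separate (standard) input.

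One step in your argument is not justified as written: the claim that $\varphi^\ast$ is an isomorphism on $\CH_\bullet(-)_{\ZZ_\ell}$. From $\varphi_\ast\varphi^\ast=\deg(\varphi)$ you get split injectivity of $\varphi^\ast$ once $\ell\neq\ell_0$, but surjectivity would require knowing that $[\ell_0^k]^\ast$ is an automorphism of $\CH(A)_{\ZZ_\ell}$, which is not known in general. The fix is to use pushforward, exactly as the paper does implicitly: from $\varphi^\ast\gamma_{(\theta_A)_\ell}=\ell_0^{N(g-1)}\gamma_{\theta_\ell}$ and $\varphi_\ast\varphi^\ast=\deg(\varphi)$ one obtains, in $\ZZ_\ell$-coefficients with $\ell\neq\ell_0$, that $\gamma_{(\theta_A)_\ell}$ equals a $\ZZ_\ell^\times$-multiple of $\varphi_\ast\gamma_{\theta_\ell}=cl(\varphi_\ast[C_0])$, hence lies in the image of $\CH_1(A)_{\ZZ_\ell}$. (Your closing aside that Hecke orbits fail to be Zariski dense over $\CC$ is not correct: the strong-approximation argument used in the proof of Theorem~\ref{introth:density} shows that the $\Sp_{2g}^\delta(\ZZ[1/\ell])$-orbit of any point is analytically, hence Zariski, dense. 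But this does not affect your proof.)
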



\textbf{Acknowledgements}. We are grateful for the encouragement and support of our respective PhD advisors Daniel Huybrechts and Olivier Benoist. We thank them, as well as Fabrizio Catanese and Frans Oort, for stimulating conversations. We thank Giuseppe Ancona, Olivier Benoist, Daniel Huybrechts and Burt Totaro for useful comments on an earlier draft of this paper. We thank the referee for his or her careful reading and for making valuable comments. 

The content of this article has been presented at the \textit{Research Seminar Algebraic Geometry} in Hanover, the \textit{Algebraic Geometry Seminar} in Berlin, and at the seminars \textit{Autour des cycles alg\'ebriques} and \textit{S\'eminaire de g\'eom\'etrie alg\'ebrique} in Paris during the months January and February 2022. We would like to thank the participants for ample feedback. 

We stress that work of Moonen and Polishchuk \cite{moonenpolishchuk} has been essential for our results. 

The first author is funded by the IMPRS program of the Max--Planck Society, the second by the European Union's Horizon 2020 research and innovation programme under the Marie Sk\l{}odowska-Curie grant agreement N\textsuperscript{\underline{o}} 754362 \img{EU}. The authors thank the Hausdorff Center for Mathematics as well as the \'Ecole normale sup\'erieure for their hospitality and pleasant working conditions during the respective stays of the authors which made this project possible. 


\section{Notation} \label{sec:notation}
\begin{itemize}[wide, labelwidth=!, labelindent=0pt]
\item 
We let $k$ be a field with separable closure $k_s$ and $\ell$ a prime number different from the characteristic of $k$. For a smooth projective variety $X$ over $k$, we let $\CH(X)$ be the Chow group of $X$ and define $\CH(X)_\QQ = \CH(X) \otimes \QQ$, $\CH(X)_{\QQ_\ell} = \CH(X) \otimes {\QQ_\ell}$ and $\CH(X)_{\ZZ_\ell} = \CH(X) \otimes {\ZZ_\ell}$. We let $\rm H_{\textnormal{\'et}}^i(X_{k_s}, \ZZ_\ell(a))$ be the $i$-th degree \'etale cohomology group with coeffients in $\ZZ_\ell(a)$, $a \in \ZZ$.  
\item 
Often, $A$ will denote an abelian variety of dimension $g$ over $k$, with dual abelian variety $\wh A$ and (normalized) Poincar\'e bundle $\ca P_A$ on $A \times \wh A$. The abelian group $\CH(A)$ will in that case be equipped with two ring structures: the usual intersection product $\cdotp$ as well as the Pontryagin product $\star$. Recall that the latter is defined as follows: 
\[\star \colon \CH(A) \times \CH(A) \to \CH(A), \quad x \star  y = m_{\ast} (\pi_1^{\ast}(x) \cdot \pi_2^{\ast}(y)). 
\]
Here, as well as in the rest of the paper, $\pi_i$ denotes the projection onto the $i$-th factor, $\Delta\colon A \to A \times A$ the diagonal morphism, and $m \colon A \times A \to A$ the group law morphism of $A$. There is a similar Pontryagin product $\star$ on \'etale cohomology, and on Betti cohomology if $k = \CC$. 
\item 
For any abelian group $M$ and any element $x \in M$, we will denote by $x_\QQ \in M \otimes_{\ZZ} \QQ$ the image of $x$ in $M \otimes_{\ZZ} \QQ$ under the canonical homomorphism $M \to M \otimes_\ZZ \QQ$. 
\end{itemize}

\section{Integral Fourier transforms and one-cycles on abelian varieties} \label{sec:two}


Our goal in this section is to provide necessary and sufficient conditions for the Fourier transform on rational Chow groups or cohomology to lift to a motivic homomorphism between integral Chow groups. We will relate such lifts to the integral Hodge conjecture when $k = \CC$. In subsequent Section \ref{sec:integralhodgeconjecture} we will use the theory developed in this section to prove Theorem \ref{maintheorem}. 

\subsection{Integral Fourier transforms and integral Hodge classes}
\label{sec:intfourierintHodge}

For abelian varieties $A$ over $k = k_s$, it is unknown whether the Fourier transform $\ca F_A \colon \CH(A)_{\QQ} \to \CH(\wh A)_{\QQ}$ preserves the subgroups of integral cycles modulo torsion. A sufficient condition for this to hold is that $\ca F_A$ lifts to a homomorphism $\CH(A) \to \CH(\wh A)$. In this section we outline a second consequence of such a lift $\CH(A) \to \CH(\wh A)$ when $A$ is defined over the complex numbers: the existence of an integral lift of $\ca F_A$ implies the integral Hodge conjecture for one-cycles on $\wh A$. 
\\
\\
Let $A$ be an abelian variety over $k$. The Fourier transform on the level of Chow groups is the group homomorphism
\[
\ca F_A \colon \CH(A)_\QQ \to \CH(\wh A)_\QQ
\]
induced by the correspondence $\ch(\ca P_A) \in \CH(A \times \wh A)_\QQ$, where $\ch(\ca P_A)$ is the Chern character of $\ca P_A$. Similarly, 
one defines the Fourier transform on the level of \'etale cohomology: 
\[
\mr F_{A}\colon \rm H_{\textnormal{\'{e}t}}^\bullet(A_{k_s}, \QQ_\ell(\bullet)) \to \rm H_{\textnormal{\'{e}t}}^\bullet(\wh A_{k_s}, \QQ_\ell(\bullet)). 
\]
In fact, $\mr F_A$ preserves the integral cohomology classes and induces, for each integer $j$ with $0 \leq j \leq 2g$, an isomorphism \cite[Proposition 1]{beauvillefourier}, \cite[page 18]{totaroIHCthreefolds}:
\[
    \mr F_{A}\colon \rm H_{\textnormal{\'{e}t}}^j(A_{k_s}, \ZZ_\ell(a)) \to \rm H_{\textnormal{\'{e}t}}^{2g-j}(\wh A_{k_s}, \ZZ_\ell(a+g-j)).
\]
Similarly, if $k = \CC$, then $\ch(\ca P_A)$ induces, for each integer $i$ with $0 \leq i \leq 2g$, an isomorphism of Hodge structures 
\begin{equation} \label{eq:integralfouriercohomology}
    \mr F_A\colon \rm H^i(A, \ZZ) \to \rm H^{2g-i}(\wh A, \ZZ)(g-i).
\end{equation}

In \cite{moonenpolishchuk}, Moonen and Polishchuk consider an isomorphism $\phi\colon A \xrightarrow{\sim} \wh A$, a positive integer $d$, and define the notion of motivic integral Fourier transform of $(A, \phi)$ up to factor $d$. The definition goes as follows. Let $\ca M(k)$ be the category of effective Chow motives over $k$ with respect to ungraded correspondences, and let $h(A)$ be the motive of $A$. Then a morphism $\ca F \colon h(A) \to h(A)$ in $\ca M(k)$ is a \textit{motivic integral Fourier transform of $(A, \phi)$ up to factor $d$} if the following three conditions are satisfied: (i) the induced morphism $h(A)_\QQ \to h(A)_\QQ$ is the composition of the usual Fourier transform with the isomorphism $\phi^\ast\colon h(\wh A)_\QQ \xrightarrow{\sim}h(A)_\QQ$, (ii) one has $d \cdot \ca F \circ \ca F = d \cdot (-1)^g \cdot [-1]_\ast$ as morphisms from $h(A)$ to $h(A)$, and (iii) $d\cdot \ca F \circ m_\ast = d \cdot \Delta^\ast \circ \ca F \otimes \ca F\colon h(A) \otimes h(A) \to h(A)$. 
\\
\\
For our purposes, we will consider similar homomorphisms $\CH(A) \to \CH(\wh A)$. However, to make their existence easier to verify (c.f.\ Theorem \ref{th:motivic}) we relax some of the above conditions:
 
\begin{definition} \label{def:weakintegralfourier}
Let $A_{/k}$ be an abelian variety and let $\ca F\colon \CH(A) \to \CH(\wh A)$ be a group homomorphism. We call $\ca F$ a \textit{weak integral Fourier transform} if the following diagram commutes:
\begin{equation}
\begin{split}
\xymatrix{
\CH(A) \ar[d]\ar[r]^{\ca F} & \CH(\wh A) \ar[d] \\
\CH(A)_\QQ \ar[r]^{\ca F_A} & \CH(\wh A)_\QQ.
}
\end{split}
\end{equation}
A group homomorphism $\ca F \colon \CH(A) \to \CH(\wh A)$ is an \textit{integral Fourier transform up to homology} if the following diagram commutes:
\begin{equation}\label{diagram:fouriercommutes}
\begin{split}
\xymatrixcolsep{5pc}
    \xymatrix{
\CH(A) \ar[d]\ar[r]^{\ca F} & \CH(\wh A) \ar[d] \\
\oplus_{r \geq 0} \rm H_{\textnormal{\'{e}t}}^{2r} (A_{k_s}, \ZZ_\ell(r)) \ar[r]^{\mr F_{A}} & \oplus_{r \geq 0} \rm H_{\textnormal{\'{e}t}}^{2r} (\wh A_{k_s}, \ZZ_\ell(r)).
}
\end{split}
\end{equation}
Similarly, a $\ZZ_\ell$-module homomorphism $\ca F_\ell \colon \CH(A)_{\ZZ_\ell} \to \CH(\wh A)_{\ZZ_\ell}$ is called an \textit{$\ell$-adic integral Fourier transform up to homology} if $\ca F_\ell$ is compatible with $\mr F_A$ and the $\ell$-adic cycle class maps. 
\end{definition}

\begin{remarks}
\begin{enumerate}[wide, labelwidth=!, labelindent=0pt]
    \item Let $\Gamma \in \CH(A \times \wh A)$ (resp.\ $\Gamma_\ell \in \CH(A \times \wh A)_{\ZZ_\ell})$ such that 
    \begin{align*}
cl(\Gamma) = \ch(\ca P_A) \quad \left(\tn{resp. } cl(\Gamma_\ell) = \ch(\ca P_A) \right)  \quad\tn{in} \quad \oplus_{r \geq 0}\rm H_{\textnormal{\'{e}t}}^{2r}((A \times \wh A)_{k_s}, \ZZ_\ell(r)). 
\end{align*}
Then
    $
    \ca F = \Gamma_\ast \colon \CH(A) \to \CH(\wh A)$ (resp. $\ca F_\ell = (\Gamma_\ell)_\ast \colon \CH(A)_{\ZZ_\ell} \to \CH(\wh A)_{\ZZ_\ell})$ is an integral Fourier transform up to homology (resp.\ an $\ell$-adic integral Fourier transform up to homology). Similarly, any cycle $\Gamma \in \CH(A \times \wh A)$ that satisfies $\Gamma_\QQ = \ch(\ca P_A) \in \CH(A \times \wh A)_{\QQ}$ induces a weak integral Fourier transform $\ca F = \Gamma_\ast \colon \CH(A) \to \CH(\wh A)$. 
    \item If $\ca F\colon \CH(A) \to \CH(\wh A)$ is a weak integral Fourier transform, then $\ca F$ is an integral Fourier transform up to homology, the $\ZZ_\ell$-module $\oplus_{r \geq 0}\rm H_{\textnormal{\'{e}t}}^{2r}(\wh A_{k_s}, \ZZ_\ell(r))$ being torsion-free. If $k = \CC$, then $\ca F\colon \CH(A) \to \CH(\wh A)$ is an integral Fourier transform up to homology if and only if $\ca F$ is compatible with the Fourier transform $\mr F_A\colon \rm H^\bullet(A, \ZZ) \to \rm H^\bullet(\wh A,\ZZ)$ on Betti cohomology. 
\end{enumerate}
\end{remarks} 

The relation between integral Fourier transforms and integral Hodge classes is as follows:

\begin{lemma}  \label{lemma:trivial}
Let $A$ be a complex abelian variety and $\ca F \colon \CH(A) \to \CH(\wh A)$ an integral Fourier transform up to homology. \begin{enumerate}[wide, labelwidth=!, labelindent=0pt]
\item For each $i \in \ZZ_{\geq 0}$, the integral Hodge conjecture for degree $2i$ classes on $A$ implies the integral Hodge conjecture for degree $2g-2i$ classes on $\wh A$.
\item If $\ca F = \Gamma_\ast$ for some $\Gamma \in \CH(A \times \wh A)$ with $cl(\Gamma) = \ch(\ca P_A)$, then $\mr F_A$ induces a group isomorphism $\rm Z^{2i}(A) \xrightarrow{\sim} \rm Z^{2g-2i}(\wh A)$ and, therefore, the integral Hodge conjectures for degree $2i$ classes on $A$ and degree $2g-2i$ classes on $\wh A$ are equivalent for all $i$. 
\end{enumerate}
\end{lemma}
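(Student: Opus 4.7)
The approach is to combine two ingredients: the commutative diagram~(\ref{diagram:fouriercommutes}) built into the definition of an integral Fourier transform up to homology, and the fact, recorded in~(\ref{eq:integralfouriercohomology}), that $\mr F_A$ is an isomorphism of integral Hodge structures $\rm H^{2i}(A,\ZZ) \xrightarrow{\sim} \rm H^{2g-2i}(\wh A,\ZZ)(g-i)$. In particular, $\mr F_A$ restricts to a group isomorphism $\Hdg^{2i}(A,\ZZ) \xrightarrow{\sim} \Hdg^{2g-2i}(\wh A,\ZZ)$.

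For the first assertion, I would fix $i$ and assume the integral Hodge conjecture in degree $2i$ holds on $A$. Given $\beta \in \Hdg^{2g-2i}(\wh A,\ZZ)$, there is a unique $\alpha \in \Hdg^{2i}(A,\ZZ)$ with $\mr F_A(\alpha) = \beta$; by assumption, $\alpha = cl(x)$ for some $x \in \CH^i(A)$. Chasing $x$ around~(\ref{diagram:fouriercommutes}) yields $cl(\ca F(x)) = \mr F_A(\alpha) = \beta$, and since the cycle class map is graded, the codimension-$(g-i)$ part of $\ca F(x)$ already represents $\beta$. Hence $\beta$ is algebraic on $\wh A$.

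For the second assertion, suppose $\ca F$ is motivic, induced by $\Gamma \in \CH(A \times \wh A)$ with $cl(\Gamma) = \ch(\ca P_A)$. The swap $\sigma\colon A \times \wh A \to \wh A \times A$, together with the canonical biduality $\wh{\wh A} = A$, sends $\ca P_A$ to $\ca P_{\wh A}$, so $\sigma_\ast \Gamma \in \CH(\wh A \times A)$ induces a motivic integral Fourier transform $\ca F^\vee\colon \CH(\wh A) \to \CH(A)$ up to homology. Applying the first assertion in both directions, $\mr F_A$ and $\mr F_{\wh A}$ each send algebraic classes to algebraic classes, so they descend to homomorphisms $\rm Z^{2i}(A) \to \rm Z^{2g-2i}(\wh A)$ and $\rm Z^{2g-2i}(\wh A) \to \rm Z^{2i}(A)$. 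The Fourier inversion identity $\mr F_{\wh A} \circ \mr F_A = (-1)^g [-1]_A^\ast$ holds on integral cohomology (the rational relation lifts because $\rm H^\bullet(A,\ZZ)$ is torsion-free), and $[-1]_A^\ast$ preserves algebraic classes; hence the two descended maps are mutually inverse up to sign. This yields the isomorphism $\rm Z^{2i}(A) \xrightarrow{\sim} \rm Z^{2g-2i}(\wh A)$, and the equivalence of the corresponding integral Hodge conjectures is immediate.

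No genuinely hard step is expected here, which is why the lemma warrants its label of ``trivial''. The only piece of bookkeeping is extracting the codimension-$(g-i)$ component of $\ca F(x)$ from the ungraded map $\ca F$; this is automatic once one knows that $\mr F_A$ concentrates $\rm H^{2i}(A,\ZZ)$ in the single graded piece $\rm H^{2g-2i}(\wh A,\ZZ)$. If any mild friction arises, it will be in checking that $\sigma_\ast \Gamma$ represents $\ch(\ca P_{\wh A})$ under biduality, which is a routine application of the compatibility of cycle classes with proper pushforward.
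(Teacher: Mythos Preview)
Your proof is correct and follows essentially the same approach as the paper: both arguments chase the commutative square~(\ref{diagram:fouriercommutes}) together with the Hodge isomorphism~(\ref{eq:integralfouriercohomology}) to get the first assertion, and both handle the motivic case by symmetry between $A$ and $\wh A$. You are slightly more explicit than the paper in constructing the reverse transform $\ca F^\vee$ via $\sigma_\ast\Gamma$ and in invoking Fourier inversion, whereas the paper simply says ``replacing $A$ by $\wh A$ and $\wh A$ by $\skew{5.5}\widehat{\widehat{A}}$''; but the underlying logic is identical.
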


\begin{proof}
We can extend Diagram (\ref{diagram:fouriercommutes}) to the following commutative diagram:
\begin{equation*} 
    \xymatrix{
\CH^{i}(A) \ar[d]^{cl^i} \ar[r] & \CH(A) \ar[d]\ar[r]^{\ca F} & \CH(\wh A) \ar[d] \ar[r] & \CH_i(\wh A) \ar[d]^{cl_i} \\
\rm H^{2i}(A, \ZZ) \ar[r] &\rm H^\bullet(A, \ZZ) \ar[r]^{\mr F_A} & \rm H^\bullet(\wh A, \ZZ) \ar[r] & \rm H^{2g-2i}(\wh A, \ZZ). 
}
\end{equation*}
The composition $\rm H^{2i}(A, \ZZ)  \to \rm H^{2g-2i}(\wh A, \ZZ)$ appearing on the bottom line agrees up to a suitable Tate twist with the map $\mr F_A$ of Equation (\ref{eq:integralfouriercohomology}). 
Therefore, we obtain a commutative diagram:
\begin{equation} \label{diagram:cycleclassmaps}
\begin{split}
\xymatrixcolsep{5pc}
\xymatrix{
\CH^i(A) \ar[d]^{cl^i}\ar[r] & \CH_i(\wh A) \ar[d]^{cl_i} \\
\Hdg^{2i}(A, \ZZ) \ar[r]^{\sim} & \Hdg^{2g-2i}(\wh A, \ZZ).
}
\end{split}
\end{equation}
Thus the surjectivity of $cl^i$ implies the surjectivity of $cl_i$. Moreover, if $\ca F$ is induced by some $\Gamma \in \CH(A \times \wh A)$, then replacing $A$ by $\wh A$ and $\wh A$ by $\skew{5.5}\widehat{\widehat{A}}$ in the argument above shows that the images of $cl^i$ and $cl_i$ are identified under the isomorphism $\mr F_A\colon \Hdg^{2i}(A, \ZZ) \xrightarrow{\sim} \Hdg^{2g-2i}(\wh A, \ZZ)$ in Diagram (\ref{diagram:cycleclassmaps}). 
\end{proof}

\subsection{Properties of the Fourier transform on rational Chow groups} \label{sec:propertiesfourier}

Let $A$ be a complex abelian variety. Observe that, for any $j \in \ZZ_{\geq 1}$ and $x \in \rm H^{2j}(A,\ZZ)$, one has
\[
\frac{x^i}{i!} \in \rm H^{2ij}(A,\ZZ) \subset \rm H^{2ij}(A, \QQ) \quad \tn{ for all } \quad i \in \ZZ_{\geq 1}. 
\]
In particular, the ideal 
$
\oplus_{j > 0}\rm H^{2j}(A, \ZZ) \subset \rm H^{2\bullet}(A, \ZZ)
$ 
admits a PD-structure \cite[\href{https://stacks.math.columbia.edu/tag/07GM}{Tag 07GM}]{stacks-project}. The analogue of this statement in $\ell$-adic étale cohomology holds when $A$ is an abelian variety over a separably closed field.

Lemma \ref{lemma:trivial} suggests that to prove Theorem \ref{maintheorem}, one would need to show that for a complex abelian variety of dimension $g$ whose minimal Poincar\'e class $c_1(\ca P_A)^{2g-1}/(2g-1)! \in \rm H^{4g-2}(A \times \wh A, \ZZ)$ is algebraic, all classes of the form $c_1(\ca P_A)^{i}/i! \in \rm H^{2i}(A \times \wh A, \ZZ)$ are algebraic. With this goal in mind we shall study Fourier transforms on rational Chow groups in Section \ref{sec:propertiesfourier}, and investigate how these relate to $\ch(\ca P_A) \in \CH(A \times \wh A)_{\QQ}$. In turns out that the cycles $c_1(\ca P_A)^{i}/i! \in \CH(A \times \wh A)_{\QQ}$ satisfy several relations that are very similar to those proved by Beauville for the cycles $\theta^{i}/i! \in \CH(A)_{\QQ}$ in case $A$ is principally polarized, see \cite{beauvillefourier}. Since we will need these results in any characteristic in order to prove Theorem \ref{introth:integraltate}, we will work over our general field $k$, see Section \ref{sec:notation}. 
\\
\\
Let $A$ be an abelian variety over $k$. Define 
\begin{align*}
\mr R_A  &= c_1(\ca P_A)^{2g-1}/(2g-1)! \in \CH_1(A\times \wh A)_{\QQ},\\
\mr R_{\wh A} &= c_1(\ca P_{\wh A})^{2g-1}/(2g-1)! \in \CH_1(\wh A\times A)_{\QQ}. 
\end{align*}
For $a \in \CH(A)_{\QQ}$, define $\mathrm{E}(a) \in \CH(A)_{\QQ}$ as the $\star$-exponential of $a$:
\[
\mathrm{E}(a)  \coloneqq \sum_{n \geq 0} \frac{a^{\star n}}{n!} \in \CH(A)_{\QQ},
\]
where $a^{\star n}$ denotes the $n$-fold Pontryagin product of $a$ (see Section \ref{sec:notation}). The key to Theorem \ref{maintheorem} will be the following:

\begin{lemma} \label{lemma:crucialprop} 
We have $\ch(\ca P_A) = \exp({c_1(\ca P_A)}) = (-1)^g\cdot \rm E((-1)^g\cdot \mr R_A) \in \CH( A \times \wh A)_{\QQ}$. 
\end{lemma}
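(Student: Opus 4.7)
The first equality $\ch(\ca P_A) = e^{\ell}$ is nothing but the Chern character formula $\ch(L) = \exp(c_1(L))$ applied to the line bundle $L = \ca P_A$.

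For the second equality, my plan is to work on the $2g$-dimensional abelian variety $B \coloneqq A \times \wh A$ and split the identity by codimension. Since $\ell \in \CH^1(B)_\QQ$, $\mr R_A \in \CH_1(B)_\QQ = \CH^{2g-1}(B)_\QQ$, and the Pontryagin product sends $\CH_i \times \CH_j$ into $\CH_{i+j}$, the class $\mr R_A^{\star m}$ lies in $\CH^{2g-m}(B)_\QQ$. Hence the codimension-$k$ component of $(-1)^g \rm E((-1)^g \cdot \mr R_A)$ is contributed solely by the summand $m = 2g - k$ and equals $(-1)^{g(k+1)} \mr R_A^{\star(2g-k)}/(2g-k)!$ (using that $(-1)^{2g}=1$). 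Matching this against $\ell^k/k!$ from $e^{\ell}$ reduces the claim to the system of identities
\[
\frac{\mr R_A^{\star m}}{m!} \;=\; (-1)^{g(m+1)}\,\frac{\ell^{2g-m}}{(2g-m)!} \quad \text{in } \CH^{2g-m}(B)_\QQ, \qquad m = 0,1,\ldots,2g. \qquad (\dagger)
\]
The case $m=1$ of $(\dagger)$ is the definition of $\mr R_A$, and the case $m=0$ reads $\ell^{2g} = (-1)^g (2g)!\,[0_B]$, the classical top self-intersection formula for the Poincar\'e class; I would verify it by pushing forward along the second projection $B \to \wh A$ and invoking $\ca F_A([A]) = (-1)^g[0_{\wh A}]$ (cf.\ \cite{beauvillefourier}).

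For the remaining cases of $(\dagger)$ my plan is to apply the Fourier transform $\ca F_B \colon \CH(B)_\QQ \to \CH(\wh B)_\QQ$ on $B$. Because $\dim B = 2g$ is even, Beauville's exchange formulas yield $\ca F_B(x\cdot y) = \ca F_B(x) \star \ca F_B(y)$ and $\ca F_B(x \star y) = \ca F_B(x) \cdot \ca F_B(y)$, together with $\ca F_B(1_B) = [0_{\wh B}]$ and $\ca F_B([0_B]) = 1_{\wh B}$. Applying $\ca F_B$ to $(\dagger)$ then converts intersection powers of $\ell$ into Pontryagin powers of $\ca F_B(\ell)$ and Pontryagin powers of $\mr R_A$ into intersection powers of $\ca F_B(\mr R_A)$, so that $(\dagger)$ becomes an identity of the same shape on $\wh B = \wh A \times A$ with the roles of $\ell$ and $\mr R_A$ exchanged. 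To compute $\ca F_B(\ell)$ and $\ca F_B(\mr R_A)$ explicitly I would use the biduality decomposition $\ca P_B \cong p_{13}^{\ast}\ca P_A \otimes p_{24}^{\ast}\ca P_{\wh A}$ of the Poincar\'e bundle on $B \times \wh B = (A \times \wh A) \times (\wh A \times A)$, which presents $\ca F_B$ as a K\"unneth product of $\ca F_A$ and $\ca F_{\wh A}$ and thereby reduces the computation to the Fourier transforms on $A$ and $\wh A$ separately.

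The hardest step will be the K\"unneth bookkeeping that tracks the precise signs $(-1)^{g(m+1)}$ and combinatorial factors $m!/(2g-m)!$ in $(\dagger)$ through this Fourier reduction: although $\ell$ itself is not a K\"unneth tensor in $\CH(B)_\QQ$, both $\ell^{2g-m}$ and $\mr R_A^{\star m}$ lie in a single pure piece of the Beauville bigrading of $\CH(B)_\QQ$, and it is this purity that ultimately permits an inductive argument on $m$ to close the identity starting from the two base cases $m=0$ and $m=1$ handled above.
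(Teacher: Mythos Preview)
Your reduction to the system of identities $(\dagger)$ is correct, as are the base cases $m=0,1$. The gap is in your plan for the remaining $m$: none of the ingredients you list actually computes anything new.

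Applying $\ca F_B$ to $(\dagger)$ only yields an identity of the same shape on $\wh B$; it is equivalent to $(\dagger)$, not easier. To make progress you must know $\ca F_B(\ell)$ (or $\ca F_B(\mr R_A)$) explicitly, and here your plan breaks down. The K\"unneth factorisation $\ca F_B \simeq \ca F_A \boxtimes \ca F_{\wh A}$ lets you compute $\ca F_B$ on classes of the form $\pi_1^\ast a \cdot \pi_2^\ast b$, but as you yourself note, $\ell$ is not such a tensor, and there is no evident way to expand it as a finite sum of tensors in $\CH(B)_\QQ$. The Beauville-bigrading purity you invoke places both sides of $(\dagger)$ in $\CH^{2g-m,0}(B)_\QQ$, but this space is far from one-dimensional, so purity alone does not force equality. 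Finally, no inductive step on $m$ is described, and it is unclear what such a step could be: knowing $(\dagger)$ for $m$ gives no obvious handle on $m+1$, since the Pontryagin and intersection products do not interact in a way that produces a recursion.

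What is missing is precisely the computation of $\ca F_B(e^\ell)$. The paper supplies it via a derived-category argument: one identifies $\Phi_{\ca P_B}(\ca P_A)$ with the kernel of the composite $\Phi_{\ca P_{\wh A}} \circ \Phi_{\ca P_A} \circ \Phi_{\ca P_{\wh A}}$, applies Mukai's relation $\Phi_{\ca P_A}\circ\Phi_{\ca P_{\wh A}} \cong [-1]^\ast[-g]$, and then invokes uniqueness of Fourier--Mukai kernels to conclude $\ca F_B(e^\ell) = (-1)^g e^{-\wh\ell}$. Combining this with the Beauville grading gives $\ca F_{\wh B}(-\wh\ell) = (-1)^g \mr R_A$, and then the general formula $\ca F_X(e^a) = (-1)^{\dim X}\rm E((-1)^{\dim X}\ca F_X(a))$ (valid when $\ca F_X(a)\in\CH_{>0}$) finishes the proof. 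Your sketch needs an input of this strength; without it the argument does not close.
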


\begin{proof} The most important ingredient in the proof is the following:
\\
\\
\hypertarget{claim1}{\textit{Claim \textcolor{blue}{$(\ast)$}}}: With respect to the Fourier transform $\ca F_{A \times \wh A}\colon \CH(A \times \wh A)_{\QQ} \to \CH(\wh A \times A)_{\QQ}$, one has $$\ca F_{A \times \wh A}\left(\exp(c_1(\ca P_A)) \right) = (-1)^g \cdot \exp( - c_1(\ca P_{\wh A}) ) \in \CH(\wh A \times A)_{\QQ}.$$
To prove Claim \textcolor{blue}{$(\ast)$}, we lift the desired equality in the rational Chow group of $\wh A \times A$ to an isomorphism in the derived category $\rm D^b(\wh A \times A)$ of $\wh A \times A$. For $X= A \times \wh A$ the Poincar\'e line bundle $\mathcal{P}_X$ on $X \times \wh X \cong A \times \wh A \times \wh A \times A$ is isomorphic to $\pi_{13}^\ast \mathcal{P}_A \otimes \pi_{24}^\ast \mathcal{P}_{\wh A}$. Let
\[
\Phi_{\ca P_X} \colon \rm D^b(A \times \wh A) \to \rm D^b(\wh A \times A)
\]
be the Fourier-Mukai transform attached to $\ca P_X \in \rm D^b(X \times \wh X)$ as in \cite[Definition 5.1]{huybrechtsfouriermukai}. Evaluating it at $\ca P_A$ gives the object
\begin{equation*}
\Phi_{{\ca P}_{X}}(\ca P_A) \cong \pi_{34,\ast} \left(\pi_{13}^\ast\ca P_A \otimes \pi_{24}^\ast \ca P_{\wh A} \otimes \pi_{12}^\ast\ca P_A  \right) \in \rm D^b(\wh A\times A),
\end{equation*}
whose Chern character is exactly $\mathcal{F}_X(\exp(c_1(\ca P_A)))$. Consider the permutation map
\[
(123) \colon A \times \wh A \times \wh A \times A \cong \wh A \times A \times \wh A \times A,
\]
with inverse $(321)$. We have
\begin{align*}
\pi_{34,\ast} \left(\pi_{13}^\ast\ca P_A \otimes \pi_{24}^\ast \ca P_{\wh A} \otimes \pi_{12}^\ast\ca P_A  \right)
&\cong 
\pi_{34,\ast} \left(
\pi_{31}^\ast\ca P_{\wh A} \otimes \pi_{12}^\ast\ca P_A
\otimes \pi_{24}^\ast \ca P_{\wh A}
\right) \\
& \cong 
\pi_{14,\ast} \left( (123)_\ast 
 \left(
\pi_{31}^\ast\ca P_{\wh A} \otimes \pi_{12}^\ast\ca P_A
\otimes \pi_{24}^\ast \ca P_{\wh A}
 \right)
 \right) \\
 & \cong 
 \pi_{14,\ast} \left( (321)^\ast 
 \left(
\pi_{31}^\ast\ca P_{\wh A} \otimes \pi_{12}^\ast\ca P_A
\otimes 
\pi_{24}^\ast \ca P_{\wh A}
 \right)
 \right) \\
 & \cong 
  \pi_{14,\ast} \left(
\pi_{12}^\ast\ca P_{\wh A}
\otimes 
\pi_{23}^\ast\ca P_A
\otimes 
\pi_{34}^\ast \ca P_{\wh A}
  \right).
\end{align*}
 We conclude that $\Phi_{{\ca P}_{X}}(\ca P_A)  \cong 
\pi_{14,\ast}\left( \pi_{12}^\ast\mathcal{P}_{\wh A} \otimes \pi_{23}^\ast \mathcal{P}_A \otimes \pi_{34}^\ast \mathcal{P}_{\wh A} \right)$. The latter is isomorphic to the Fourier--Mukai kernel of the composition
\[
\Phi_{\mathcal{P}_{\wh A}} \circ \Phi_{\mathcal{P}_A} \circ \Phi_{\mathcal{P}_{\wh A}}.
\]
Since $\Phi_{\mathcal{P}_{A}}\circ \Phi_{\mathcal{P}_{\wh A}}$ is isomorphic to $[-1_{\wh A}]^\ast \circ [-g]$ by \cite[Theorem 2.2]{mukaiduality}, we have
\[
\Phi_{\mathcal{P}_{\wh A}} \circ \Phi_{\mathcal{P}_A} \circ \Phi_{\mathcal{P}_{\wh A}}\cong  \Phi_{\mathcal{P}_{\wh A}} \circ [-1_{\wh A}]^\ast \circ [-g].
\]
This is the Fourier--Mukai transform with kernel $\mathcal{P}_{\wh A}^\vee[-g] \in \rm D^b(\wh A \times A)$. By uniqueness of the Fourier--Mukai kernel of an equivalence \cite[Theorem 2.2]{orlovfouriermukaikernel}, it follows that $\Phi_{\ca P_X}(\ca P_A) \cong \mathcal{P}_{\wh A}^\vee[-g]$. Since the Chern character of $\mathcal{P}_{\wh A}^\vee[-g]$ equals $(-1)^{g}\cdot \exp(-c_1(\ca P_{\wh A})) \in \CH(\wh A \times A)_{\QQ}$, this finishes the proof of Claim \hyperlink{claim1}{$(\ast)$}.
    \\
    \\
Next, we claim that $(-1)^g \cdot \ca F_{\wh A \times A}(-c_1(\ca P_{\wh A}))= \mr R_A$. 
To see this, recall that for each integer $i$ with $0 \leq i \leq g$, there is a canonical \textit{Beauville decomposition} $\CH^i(A)_\QQ = \oplus_{j = i-g}^i\CH^{i,j}(A)_{\QQ}$ \cite{beauvilledecomposition, deningermurre}. Since the Poincar\'e bundle $\ca P_A$ is symmetric, we have $c_1(\ca P_A) \in \CH^{1,0}(A \times \wh A)_{\QQ}$ and hence $c_1(\ca P_A)^{i} \in \CH^{i,0}(A \times \wh A)_{\QQ}$. In particular, we have $\mr R_A \in \CH^{2g-1,0}(A \times \wh A)_\QQ$. 
The fact that $\ca P_{A}$ is symmetric also implies - via Claim \hyperlink{claim1}{$(\ast)$} - that we have
$$
\ca F_{\wh A \times A}\left((-1)^g\cdot \exp(- c_1(\ca P_{\wh A})\right) = \exp(c_1(\ca P_A)).$$ Indeed, $\ca F_{\wh A \times A} \circ \ca F_{A \times \wh A} = [-1]^\ast \cdot (-1)^{2g} = [-1]^\ast$, see \cite[Corollary 2.22]{deningermurre}. Since $\ca F_{\wh A \times A}$ identifies $\CH^{i,0}(\wh A \times A)_{\QQ}$ with $\CH^{g-i,0}(A \times \wh A)$ (see \cite[Lemma 2.18]{deningermurre}), we must indeed have 
\begin{equation} \label{importantequation}
(-1)^g \cdot \ca F_{\wh A \times A}\left(-c_1(\ca P_{\wh A})\right)= \ca F_{\wh A \times A}\left((-1)^{g+1}\cdot c_1(\ca P_{\wh A})\right) =  \frac{c_1(\ca P_A)^{2g-1}}{(2g-1)!}  = \mr R_A. 
\end{equation}
For a $g$-dimensional abelian variety $X$ and any $x,y \in \CH(X)_{\QQ}$, one has $$\ca F_{X}(x \cdot y) = (-1)^g\cdot \ca F_{X}(x) \star \ca F_{X}(y)\in \CH(\wh X)_{\QQ}.$$ Indeed, in \cite[Theorem 4.5]{murre-cyclesonAVs} this is proved when $k$ is algebraically closed, but holds over general $k$ (and even for abelian schemes, see e.g. \cite{geeredixmoon}). 

This implies (see also \cite[\S 3.7]{moonenpolishchuk}) that if $a$ is a cycle on $X$ such that $\ca F_{X}(a) \in \CH_{>0}(\wh X)_{\QQ}$, then $\ca F_{X}(\exp(a)) =
(-1)^g\cdot \rm E((-1)^g\cdot\ca F_{X}(a))$. This allows us to conclude that
\begin{align*}
    \exp(c_1(\ca P_A)) &= 
    \ca F_{\wh A \times A}
    \left( (-1)^g \cdot \exp(-c_1(\ca P_{\wh A}))\right)  \\
    &= (-1)^{g} \cdot\rm E\left(\ca F_{\wh A \times A}(-c_1(\ca P_{\wh A}))\right) \\
    &= (-1)^g\cdot \rm E((-1)^g\cdot \mr R_A),
\end{align*}
which finishes the proof. 
\end{proof}

Next, assume that $A$ is equipped with a \textit{principal} polarization $\lambda \colon A \xrightarrow{\sim} \wh A$, 
and define
\begin{equation} \label{generaltheta}
    \Theta = \frac{1}{2}\cdot (\id, \lambda)^\ast c_1(\ca P_A) \in \CH^1(A)_\QQ \quad  \tn{ and } \quad    \widehat\Theta = \frac{1}{2}\cdot (\lambda^{-1}, \id)^\ast c_1(\ca P_{A}) \in \CH^1(\wh A)_\QQ. 
\end{equation}
Here $(\id,\lambda)$ (resp. $(\lambda^{-1}, \id)$) is the morphism 
$(\id,\lambda) \colon A \to A\times \wh A$ (resp. $(\lambda^{-1}, \id) \colon \wh A \to A \times \wh A$). One can understand the relation between 
\[
\Gamma_\Theta \coloneqq \Theta^{g-1}/(g-1)! \in \CH_1(A)_{\QQ}
\]
and $\mr R_A = {c_1(\ca P_A)}^{2g-1}/(2g-1)! \in \CH_1(A \times \wh A)_{\QQ}$ in the following way. 
Define $j_1\colon A \to A \times \wh A$ and $j_{2} \colon \wh A \to A \times \wh A$ by $x \mapsto (x,0)$ and $y \mapsto (0,y)$ respectively. 
Define a one-cycle $\tau$ on $A \times \wh A$ as follows:
\begin{align*} 
\tau \coloneqq 
j_{1, \ast} (\Gamma_\Theta)  
+  j_{2, \ast}(\Gamma_{\wh \Theta}) - (\id, \lambda)_\ast (\Gamma_\Theta) 
\in \CH_1( A \times \wh A )_\QQ.
\end{align*}

\begin{lemma} \label{lemma:minimalclasspoincarecomparison}
One has 
$
 \tau = (-1)^{g+1}\cdot \mr R_A \in \CH_1(A \times \wh A)_{\QQ}$. 
\end{lemma}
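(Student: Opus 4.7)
The plan is to pull back the claimed identity along the isomorphism $\psi = \mathrm{id}_A \times \lambda \colon A \times A \xrightarrow{\sim} A \times \wh A$. Under $\psi^*$ the Poincar\'e divisor class becomes
\[
\ell' \; := \; \psi^*\ell \; = \; m^*\Theta - p_1^*\Theta - p_2^*\Theta \; \in \; \CH^1(A \times A)_\QQ,
\]
while the three summands of $\tau$ pull back to $\tilde{j}_{1,*}\Gamma_\Theta$, $\tilde{j}_{2,*}\Gamma_\Theta$, and $\Delta_*\Gamma_\Theta$, where $\tilde{j}_1, \tilde{j}_2 \colon A \to A \times A$ are the two zero-section inclusions, $\Delta$ is the diagonal, and the middle term uses $\lambda_*\Gamma_\Theta = \Gamma_{\wh\Theta}$. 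The statement thus reduces to proving
\[
\frac{(\ell')^{2g-1}}{(2g-1)!} \; = \; (-1)^{g+1}\bigl(\tilde{j}_{1,*}\Gamma_\Theta + \tilde{j}_{2,*}\Gamma_\Theta - \Delta_*\Gamma_\Theta\bigr) \quad \text{in } \CH_1(A \times A)_\QQ.
\]

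\textbf{Key steps.} Writing $\alpha = m^*\Theta$, $\beta = p_1^*\Theta$, $\gamma = p_2^*\Theta$, I would expand $(\alpha - \beta - \gamma)^{2g-1}$ by the multinomial theorem and organize the resulting terms $\alpha^a \beta^b \gamma^c$ (with $a+b+c = 2g-1$) by which of the exponents attains the maximum value $g$ (at most one can, since each is bounded by $g$). Since the polarization is principal we have $\Theta^g/g! = [0_A]$, whence
\[
\alpha^g = g! \cdot \delta_*[A], \qquad \beta^g = g! \cdot \tilde{j}_{2,*}[A], \qquad \gamma^g = g! \cdot \tilde{j}_{1,*}[A],
\]
with $\delta \colon A \to A \times A$, $x \mapsto (x, -x)$, the anti-diagonal. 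Combined with the projection formula and the pullback identities $\tilde{j}_2^*\alpha = \tilde{j}_2^*\gamma = \Theta$, $\tilde{j}_2^*\beta = 0$, $\delta^*\beta = \Theta$, and $\delta^*\gamma = [-1]^*\Theta = \Theta$ (the last equality using the symmetry of $\Theta$), each boundary group of terms reduces to a rational multiple of a pushforward of $\Gamma_\Theta = \Theta^{g-1}/(g-1)!$. Finally, the factorization $\delta = (\mathrm{id}_A \times [-1]) \circ \Delta$ combined with $[-1]_*\Gamma_\Theta = \Gamma_\Theta$ converts the $\delta_*$-contribution into a $\Delta_*$-contribution, yielding the diagonal term on the right.

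\textbf{Main obstacle.} The genuine difficulty lies in the \emph{interior} part of the expansion, where all three exponents are strictly less than $g$: such terms exist for every $g \ge 2$ (already $\alpha\beta\gamma$ when $g = 2$), do not vanish term-by-term, and are not obviously supported on any low-dimensional subvariety of $A \times A$. The technical core of the proof is to show that these interior contributions -- via combinatorial cancellations of the shape $\sum_{j=0}^{g-1}(-1)^j\binom{g-1}{j} = 0$ and their multi-variable refinements -- recombine with the boundary sums to produce precisely the three pushforwards on the right. As a consistency check, and as a guide for the correct signs and coefficients, the analogous identity in $\rm H^{4g-2}(A \times A, \QQ)$ can be verified directly by K\"unneth decomposition, writing $\ell'$ as the element of $\rm H^1(A) \otimes \rm H^1(A)$ corresponding to the symplectic form of $\Theta$ under the principal polarization.
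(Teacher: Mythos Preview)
Your setup is correct: pulling back along $\id_A \times \lambda$ reduces the claim to an identity on $A \times A$ between $(\ell')^{2g-1}/(2g-1)!$ and the three pushforwards. But the ``main obstacle'' you name is not a residual detail --- it is the entire content of the lemma, and the mechanism you propose does not address it. The interior monomials $\alpha^a\beta^b\gamma^c$ with $a,b,c \le g-1$ are genuine, nonzero, and in general distinct elements of $\CH_1(A\times A)_\QQ$; a binomial identity like $\sum_j(-1)^j\binom{g-1}{j}=0$ cancels \emph{coefficients}, not cycles, and there is no a priori linear relation among these classes that would let coefficient cancellation force the sum to lie in the span of $\tilde j_{1,*}\Gamma_\Theta$, $\tilde j_{2,*}\Gamma_\Theta$, $\Delta_*\Gamma_\Theta$. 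Your K\"unneth check in cohomology works precisely because $\rm H^\bullet(A\times A)$ decomposes as a tensor product, a structure that $\CH(A\times A)_\QQ$ lacks.

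The paper avoids expanding $\ell^{2g-1}$ altogether. It uses two inputs already established in the proof of Lemma~\ref{lemma:crucialprop}: first, $\ca F_{A\times A}(\ell) = (-1)^{g+1}\mr R_A$; second, Beauville's $\ca F_A(\Theta) = (-1)^{g-1}\Gamma_\Theta$. The link between them is the functoriality formula $\ca F_X \circ f^* = (-1)^{\dim X - \dim Y}(\wh f)_* \circ \ca F_Y$: applied to the decomposition $\ell = m^*\Theta - \pi_1^*\Theta - \pi_2^*\Theta$ and the dualities $\wh m = \Delta$, $\wh\pi_i = j_i$, one reads off $\ca F_{A\times A}(\ell) = (-1)^g(\Delta_*\ca F_A(\Theta) - j_{1,*}\ca F_A(\Theta) - j_{2,*}\ca F_A(\Theta))$, and the lemma follows. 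The Fourier transform is exactly the device that converts the additive expression for $\ell$ into the pushforward expression for $\mr R_A$ without ever seeing the multinomial expansion.
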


\begin{proof}


Identify $A$ and $\wh A$ via $\lambda$. This gives $c_1(\ca P_A) = m^\ast(\Theta) - \pi_1^\ast(\Theta) - \pi_2^\ast(\Theta)$, and the Fourier transform becomes an endomorphism $\ca F_A \colon \CH(A)_\QQ \to \CH(A)_\QQ$. We claim that $\tau = (-1)^g \cdot \left(  \Delta_\ast \ca F_A(\Theta) - j_{1, \ast} \ca F_A(\Theta) -  j_{2, \ast} \ca F_A(\Theta) \right)$. For this, it suffices to show that $\ca F_A(\Theta) = (-1)^{g-1}\cdot \Theta^{g-1}/(g-1)! \in \CH_1(A)_\QQ$. Now $\ca F_A(\exp(\Theta)) = \exp({-\Theta})$ by Lemma \ref{beauvillelemma} below. Moreover, since $\Theta$ is symmetric, we have $\Theta \in \CH^{1,0}(A)_{\QQ}$, hence $\Theta^{i}/i! \in \CH^{i,0}(A)_\QQ$ for each $i \geq 0$. Therefore, $\ca F_A\left(\Theta^{i}/i!\right) \in \CH^{g-i,0}(A)_\QQ$ by \cite[Lemma 2.18]{deningermurre}. This implies that in fact, $\ca F_A\left(\Theta^i/i!\right) =(-1)^{g-i}\cdot \Theta^{g-i}/(g-i)! \in \CH^{g-i,0}(A)_\QQ$ for every $i$. In particular, the claim follows. 
\\
\\
Next, recall that $\ca F_{A\times A}(c_1(\ca P_A)) = (-1)^{g+1}\cdot \mr R_A$, see Claim \hyperlink{claim1}{$(\ast)$}. So at this point, it suffices to prove the identity $\ca F_{A\times A}(c_1(\ca P_A)) = (-1)^g \cdot \left(  \Delta_\ast \ca F_A(\Theta) - j_{1, \ast} \ca F_A(\Theta) -  j_{2, \ast} \ca F_A(\Theta) \right)$. To prove this, we use the following functoriality properties of the Fourier transform on the level of rational Chow groups. Let $X$ and $Y$ be abelian varieties and let $f\colon X \to Y$ be a homomorphism with dual homomorphism $\wh f\colon \wh Y \to \wh X$. We then have the following equalities \cite[(3.7.1)]{moonenpolishchuk}: 
\begin{equation}\label{eq:functoriality}
    (\wh f)^\ast \circ \ca F_X = \ca F_Y \circ f_\ast, \hspace{3mm}  \ca F_X \circ f^\ast = (-1)^{\dim X - \dim Y}\cdot (\wh f)_\ast \circ \ca F_Y. 
\end{equation}
Since $c_1(\ca P_A) = m^\ast \Theta - \pi_1^\ast \Theta - \pi_2^\ast \Theta$, it follows from Equation (\ref{eq:functoriality}) that
\begin{align*}
    \ca F_{A \times A}(c_1(\ca P_A)) &=  \ca F_{A \times A}\left( m^\ast \Theta\right) - \ca F_{A \times A}\left(\pi_1^\ast \Theta\right) - \ca F_{A \times A}\left(\pi_2^\ast \Theta\right) \\
 &= (-1)^g\cdot \left(\Delta_\ast \ca F_A(\Theta) - j_{1, \ast} \ca F_A(\Theta) -  j_{2, \ast} \ca F_A(\Theta) \right).
\end{align*}
\end{proof}

\begin{lemma}[Beauville] \label{beauvillelemma}
Let $A$ be an abelian variety over $k$, principally polarized by $\lambda \colon A \xrightarrow{\sim} \wh A$, and define $\Theta = \frac{1}{2}\cdot (\id, \lambda)^\ast c_1(\ca P_A) \in \CH^1(A)_{\QQ}$. Identify $A$ and $\wh A$ via $\lambda$. With respect to the Fourier transform $\ca F_A\colon \CH(A)_{\QQ} \xrightarrow{\sim} \CH(A)_{\QQ}$, one has $\ca F_A(\exp(\Theta)) = \exp({-\Theta})$. 
\end{lemma}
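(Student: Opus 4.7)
The plan is to reduce the computation to a simple push-pull via the explicit description of the Poincaré class on $A \times A$ under the identification provided by $\lambda$, together with the fact that $\Theta^g/g! = [\tn{pt}]$ for a principal polarization.

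First, I would establish that under the identification $A = \wh A$ via $\lambda$, the class $\ell = c_1(\ca P_A) \in \CH^1(A \times A)_\QQ$ satisfies
\[
\ell = m^\ast\Theta - \pi_1^\ast \Theta - \pi_2^\ast \Theta.
\]
This is standard: both sides define symmetric line bundle classes on $A \times A$ whose restrictions along $\{0\}\times A$ and $A \times \{0\}$ vanish, and whose pullback along $(\id,\id) = (\id,\lambda)$ equals $2\Theta$ (using $[2]^\ast \Theta = 4\Theta$ and the hypothesis $2\Theta = (\id,\lambda)^\ast \ell$). By the universal property of $\ca P_A$, this characterizes $\ell$.

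Next I would plug this into the definition of $\ca F_A$. Writing the Fourier transform as $\ca F_A(x) = \pi_{2,\ast}(\pi_1^\ast(x) \cdot e^\ell)$ and using the previous identity,
\[
\ca F_A(e^\Theta) \;=\; \pi_{2,\ast}\!\left( e^{\pi_1^\ast \Theta + \ell} \right) \;=\; \pi_{2,\ast}\!\left( e^{m^\ast\Theta - \pi_2^\ast \Theta} \right) \;=\; e^{-\Theta} \cdot \pi_{2,\ast}\!\left( e^{m^\ast\Theta} \right),
\]
where the last step uses the projection formula applied to $e^{-\pi_2^\ast \Theta} = \pi_2^\ast(e^{-\Theta})$.

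It then remains to show $\pi_{2,\ast}(e^{m^\ast\Theta}) = 1 \in \CH(A)_\QQ$. For this, consider the translation automorphism $\phi\colon A\times A \to A \times A$, $(x,y) \mapsto (x+y,y)$, which satisfies $\pi_1 \circ \phi = m$ and $\pi_2 \circ \phi^{-1} = \pi_2$. Hence
\[
\pi_{2,\ast}(e^{m^\ast \Theta}) \;=\; \pi_{2,\ast}\phi^\ast(e^{\pi_1^\ast \Theta}) \;=\; (\pi_2 \circ \phi^{-1})_\ast \pi_1^\ast(e^\Theta) \;=\; \pi_{2,\ast}\pi_1^\ast(e^\Theta).
\]
By flat base change along $\pi_2$, this equals the pullback of $\pi_\ast(e^\Theta) \in \CH(\Spec k)_\QQ = \QQ$, where $\pi\colon A \to \Spec k$ is the structure map. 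Since $\Theta$ is a principal polarization, $\Theta^g/g! = [\tn{pt}]$ in $\CH_0(A)_\QQ$, so $\pi_\ast(e^\Theta) = \pi_\ast(\Theta^g/g!) = 1$. Combining gives $\ca F_A(e^\Theta) = e^{-\Theta}$.

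The only genuine obstacle is the verification of the identity $\ell = m^\ast\Theta - \pi_1^\ast\Theta - \pi_2^\ast\Theta$; everything afterwards is routine manipulation of pushforwards and the projection formula.
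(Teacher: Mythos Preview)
Your proof is correct and follows essentially the same route as the paper's: use $\ell = m^\ast\Theta - \pi_1^\ast\Theta - \pi_2^\ast\Theta$, factor out $e^{-\Theta}$ via the projection formula, and reduce to showing $\pi_{2,\ast}(e^{m^\ast\Theta}) = 1$. The only cosmetic difference is in this last step: the paper observes directly that for codimension reasons only $\Theta^g/g!$ contributes and that $\pi_{2,\ast}m^\ast(\Theta^g/g!) = \deg(\Theta^g/g!) = 1$ (checked after base change to $k_s$), whereas you use the shearing automorphism $\phi(x,y)=(x+y,y)$ to rewrite $\pi_{2,\ast}m^\ast$ as $\pi_{2,\ast}\pi_1^\ast$ and then apply base change; both arguments are equally valid and equally short.
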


\begin{proof}
Our proof follows the proof of \cite[Lemme 1]{beauvillefourier}, but has to be adapted, since $\Theta$ does not necessarily come from a symmetric ample line bundle on $A$. Since one still has $c_1(\ca P_A) = m^\ast \Theta - \pi_1^\ast \Theta - \pi_2^\ast \Theta$, the argument can be made to work: one has  
\begin{align*}
\ca F_A(\exp(\Theta)) &= 
\pi_{2,\ast}\left(
\exp(c_1(\ca P_A)) \cdot \pi_1^\ast \exp({\Theta}) 
\right) \\
&=
\pi_{2,\ast}\left(\exp({m^\ast \Theta - \pi_2^\ast\Theta}) \right)\\ &= \exp({-\Theta})  \cdot \pi_{2,\ast}(m^\ast \exp({\Theta})) \quad \quad \in \;  \CH(A)_{\QQ}.
\end{align*}
For codimension reasons, one has $\pi_{2,\ast}(m^\ast \exp(\Theta)) = \pi_{2,\ast}m^\ast(\Theta^g/g!)= \deg(\Theta^g/g!) \in \CH^0(A)_{\QQ} \cong \QQ$. Pull back $\Theta^g/g!$ along $A_{k_s} \to A$ to see that $\deg(\Theta^g/g!) = 1 \in \CH^0(A)_{\QQ} \cong \CH^0(A_{k_s})_{\QQ}$, since over $k_s$ the cycle $\Theta$ becomes the cycle class attached to a symmetric ample line bundle.
\end{proof}

\subsection{Divided powers and integral Fourier transforms}




It was asked by Bruno Kahn whether there exists a PD-structure on the Chow ring of an abelian variety over any field with respect to its usual (intersection) product. There are counterexamples over non-closed fields: see \cite{esnaultelementarytheorems}, where Esnault constructs an abelian surface $X$ and a line bundle $\ca L$ on $X$ such that $c_1(\ca L) \cdot c_1(\ca L)$ is not divisible by $2$ in $\CH_0(X)$. However, the case of algebraically closed fields remains open \cite[Section 3.2]{moonenpolishchuk}. What we do know, is the following:

\begin{theorem}[Moonen--Polishchuk] \label{th:PDstructure}
Let $A$ be an abelian variety over $k$. The ring $\left(\CH(A), \star \right)$ admits a canonical PD-structure $\gamma$ on the ideal $\CH_{>0}(A) \subset \CH(A)$. If $k = \bar k$, then $\gamma$ extends to a PD-structure on the ideal generated by $\CH_{>0}(A)$ and the zero cycles of degree zero. 
\end{theorem}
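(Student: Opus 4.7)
The plan is to construct $\gamma_n$ geometrically on prime cycles of positive dimension via symmetric products, and to extend additively to $\CH_{>0}(A)$ in a way that is compatible with rational equivalence. For an irreducible subvariety $\iota\colon Z\hookrightarrow A$ with $\dim Z \geq 1$, the $n$-fold sum map $m^{(n)}\circ \iota^{\times n}\colon Z^n \to A$ is invariant under the permutation action of $S_n$, and therefore factors as $\bar m\circ q$, where $q\colon Z^n\to \Sym^n Z$ is the quotient (which exists since $Z$ has positive dimension) and $\bar m\colon \Sym^n Z\to A$ is the induced summation map. Since $q$ has generic degree $n!$, the only candidate forced by the projection formula is
\[
\gamma_n([Z]) \coloneqq \bar m_\ast [\Sym^n Z] \in \CH(A), \qquad n!\cdot \gamma_n([Z]) = [Z]^{\star n}.
\]

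The main obstacle is showing that this operation extends to a well-defined map on $\CH_{>0}(A)$ respecting rational equivalence. On the free abelian group on prime cycles the extension is dictated by the additivity axiom $\gamma_n(x+y)=\sum_{i+j=n}\gamma_i(x)\star \gamma_j(y)$, read recursively. For rational-equivalence invariance I would use a family version of the construction: given a cycle $\mr W$ on $\PP^1\times A$ whose fibres over $0$ and $\infty$ represent two rationally equivalent cycles, take the components of $\mr W$ that dominate $\PP^1$ and form the $n$-fold relative symmetric product over a dense open of $\PP^1$, then push forward to $\PP^1\times A$ and control the boundary fibres. The existing $\QQ$-algebra PD-structure on $(\CH(A)_\QQ,\star)$ forces agreement modulo torsion, so the work is concentrated in ruling out torsion contributions; the Beauville decomposition serves as a useful organising device, since symmetric powers respect the grading component by component.

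With $\gamma_n$ well-defined on $\CH_{>0}(A)$, the PD-axioms such as $\gamma_m(x)\star\gamma_n(x) = \binom{m+n}{m}\gamma_{m+n}(x)$ and the composition identity reduce, on prime cycles, to combinatorial identities relating $\Sym^m Z \times \Sym^n Z$ and $\Sym^{m+n} Z$ (and their nested analogues), which can be checked by direct analysis of the quotient maps and their generic degrees. Finally, for the extension over $k=\bar k$ to include zero-cycles of degree zero, one exploits that such cycles are generated by differences $[a]-[0]$ with $a\in A(\bar k)$; using the Albanese identification and the divisibility of $A(\bar k)$, one constructs canonical $n$-divisors and extends $\gamma_n$ by the polynomial formulas dictated by additivity on mixed sums. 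Compatibility with the divided powers already constructed on $\CH_{>0}(A)$ must then be verified on elementary mixed terms, which reduces to translation-invariance of Chow groups of positive-dimensional cycles under the Pontryagin action of a point.
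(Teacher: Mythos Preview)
The paper does not give a proof of this statement: it is quoted as a theorem of Moonen--Polishchuk and cited to their paper (it is their Theorem~1.6), so there is nothing in the present paper to compare your argument against.

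That said, your geometric construction via symmetric products is precisely the strategy Moonen--Polishchuk employ, and you have correctly located the main difficulty in showing that the construction descends modulo rational equivalence. But your handling of that step contains a genuine gap. The sentence ``the existing $\QQ$-algebra PD-structure \dots\ forces agreement modulo torsion, so the work is concentrated in ruling out torsion contributions; the Beauville decomposition serves as a useful organising device'' does not constitute an argument: the Beauville decomposition is only available on $\CH(A)_\QQ$, so it cannot be invoked to control torsion phenomena in $\CH(A)$, and knowing that two integral cycles agree after tensoring with $\QQ$ gives no mechanism for concluding they agree integrally. Your family-over-$\PP^1$ idea is in the right spirit, but you have not explained how to compare the fibre of the relative symmetric product at a point of $\PP^1$ with the symmetric product of the fibre, nor how to handle the components of $\mr W$ that do not dominate $\PP^1$; this is exactly where the content lies. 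Moonen--Polishchuk work directly at the level of cycles, extend the symmetric-product construction to the group of positive-dimensional cycles, and then check by an explicit computation that the subgroup of cycles rationally equivalent to zero is preserved. A parenthetical correction: the existence of $\Sym^n Z$ has nothing to do with $\dim Z\geq 1$; the quotient exists for any quasi-projective $Z$.
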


In particular, for each element $x \in \CH_{>0}(A)$ and each $n \in \ZZ_{\geq 1}$, there is a canonical element $x^{[n]} \in \CH_{>0}(A)$ such that $n!x^{[n]} = x^{\star n}$, see \cite[\href{https://stacks.math.columbia.edu/tag/07GM}{Tag 07GM}]{stacks-project}. For $x \in \CH_{>0}(A)$, we may then define $\mathrm{E}(x) = \sum_{n \geq 0} x^{[n]} \in \CH(A)$ as the $\star$-exponential of $x$ in terms of its divided powers. 
\\
\\
Together with the results of Section \ref{sec:propertiesfourier}, Theorem \ref{th:PDstructure} enables us to provide several criteria for the existence of a weak integral Fourier transform. We recall that for an abelian variety $A$ over $k$, principally polarized by $\lambda \colon A \xrightarrow{\sim} \wh A$, we defined $\Theta \in \CH^1(A)_\QQ$ to be the symmetric ample class attached to the polarization $\lambda$, see Equation (\ref{generaltheta}). 

\newpage

\begin{theorem} \label{th:motivic}
Let $A_{/k}$ be an abelian variety of dimension $g$. The following are equivalent:
\begin{enumerate}[wide, labelwidth=!, labelindent=0pt]
    \item \label{motivicone} The one-cycle $\mr R_A = c_1(\ca P_A)^{2g-1}/(2g-1)! \in \CH(A \times \wh A)_{\QQ}$ lifts to a one-cycle in $\CH(A \times \wh A)$. 
    \item \label{motivictwo} The cycle $\ch(\ca P_A) \in \CH(A \times \wh A)_\QQ$ lifts to a cycle in $\CH(A \times \wh A)$. 
    \item \label{motivicthree} 
    The cycle $\ch(\ca P_{A \times \wh A}) \in \CH(A \times \wh A \times \wh A \times A)_\QQ$ lifts to a cycle in $\CH(A \times \wh A \times \wh A \times A)$. 
\end{enumerate}
Moreover, if $A$ carries a symmetric ample line bundle that induces a principal polarization $\lambda \colon A \xrightarrow{\sim} \wh A$, then 
the above statements are equivalent to the following equivalent statements:
\begin{enumerate}[wide, labelwidth=!, labelindent=0pt]
\setcounter{enumi}{3}
    \item \label{motiviczero} The two-cycle $c_1(\ca P_A)^{2g-2}/(2g-2)! \in \CH(A \times \wh A)_{\QQ}$ lifts to a two-cycle in $\CH(A \times \wh A)$.
    \item \label{motivicfour} The one-cycle $\Gamma_\Theta  = \Theta^{g-1}/(g-1)! \in \CH(A)_\QQ$ lifts to a one-cycle in $\CH(A)$. 
    \item \label{motivicfive} The abelian variety $A$ admits a weak integral Fourier transform. 
    \item \label{motivicsix} The Fourier transform $\ca F_A$ 
    satisfies $\ca F_A\left( \CH(A)/\textnormal{torsion}\right) \subset \CH(\wh A)/\textnormal{torsion}$.  
    \item \label{motiviceight} There exists a PD-structure on the ideal $\CH^{>0}(A)/\textnormal{torsion} \subset \CH(A)/\textnormal{torsion}$. 
\end{enumerate}
\end{theorem}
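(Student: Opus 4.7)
The plan is to prove the equivalences in two stages. The general equivalences $(1) \Leftrightarrow (2) \Leftrightarrow (3)$ rest on Lemma~\ref{lemma:crucialprop}, which expresses $\ch(\ca P_A)$ as the Pontryagin $\star$-exponential $(-1)^g \rm E((-1)^g \mr R_A)$, combined with the Moonen--Polishchuk PD-structure on $(\CH_{>0}(A \times \wh A),\star)$ from Theorem~\ref{th:PDstructure}. The principally polarised equivalences $(1) \Leftrightarrow (4) \Leftrightarrow (5) \Leftrightarrow (6) \Leftrightarrow (7) \Leftrightarrow (8)$ are then driven by Lemma~\ref{lemma:minimalclasspoincarecomparison} together with the Beauville identities extracted in its proof.

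For $(1) \Rightarrow (2)$, given an integral one-cycle $\tilde{\mr R}_A \in \CH_1(A \times \wh A)$ lifting $\mr R_A$, I would form
\[
\Gamma \coloneqq (-1)^g \sum_{n=0}^{2g} \bigl((-1)^g \tilde{\mr R}_A\bigr)^{[n]} \in \CH(A \times \wh A),
\]
where $(-)^{[n]}$ denotes the Pontryagin divided power. This sum is a finite integral cycle (since $\tilde{\mr R}_A^{\star n} \in \CH_n(A \times \wh A)$ vanishes once $n > 2g$), and the PD-identity $n! \cdot x^{[n]} = x^{\star n}$ together with Lemma~\ref{lemma:crucialprop} forces $\Gamma_\QQ = \ch(\ca P_A)$, producing the desired motivic weak integral Fourier transform. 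The reverse $(2) \Rightarrow (1)$ is tautological: the codimension-$(2g-1)$ component of any integral lift of $\ch(\ca P_A)$ lifts $\ell^{2g-1}/(2g-1)! = \mr R_A$. For $(2) \Leftrightarrow (3)$, I would exploit the decomposition $\ca P_{A \times \wh A} \cong \pi_{13}^* \ca P_A \otimes \pi_{24}^* \ca P_{\wh A}$ recalled in the proof of Lemma~\ref{lemma:crucialprop}: combined with the swap symmetry $\wh A \times A \cong A \times \wh A$, it yields $(2) \Rightarrow (3)$ by tensoring integral lifts, while $(3) \Rightarrow (2)$ follows by pulling back an integral lift of $\ch(\ca P_{A \times \wh A})$ along the closed immersion $(a,\hat a) \mapsto (a,\hat a,0,0)$ of $A \times \wh A$ into $A \times \wh A \times \wh A \times A$, under which $\ca P_{A \times \wh A}$ restricts to $\ca P_A$.

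Now suppose $A$ is principally polarised by $\lambda$. The implication $(5) \Rightarrow (1)$ is an immediate reading of Lemma~\ref{lemma:minimalclasspoincarecomparison}: an integral lift of $\Gamma_\Theta$ transports via $\lambda$ to one of $\Gamma_{\wh \Theta}$, and hence, through the identity $(-1)^{g+1} \mr R_A = j_{1,*} \Gamma_\Theta + j_{2,*} \Gamma_{\wh \Theta} - (\id,\lambda)_* \Gamma_\Theta$, to one of $\mr R_A$. For $(2) \Rightarrow (5)$, I would apply the motivic correspondence $\Gamma$ to the integral class $\Theta \in \CH^1(A)$: the Beauville identity $\ca F_A(\Theta^i/i!) = (-1)^{g-i} \wh\Theta^{g-i}/(g-i)!$ established in the proof of Lemma~\ref{lemma:minimalclasspoincarecomparison} via Lemma~\ref{beauvillelemma} shows the resulting integral class in $\CH(\wh A)$ rationalises to $-\Gamma_{\wh \Theta}$, so $-\lambda^* \ca F(\Theta) \in \CH(A)$ integrally lifts $\Gamma_\Theta$. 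The equivalences $(2) \Leftrightarrow (6) \Leftrightarrow (7)$ then follow by running the same constructions through the weak (rather than motivic) Fourier transform and invoking the standard fact that a homomorphism $\CH(A) \to \CH(\wh A)_\QQ$ factors through $\CH(\wh A)$ modulo torsion if and only if its image lies in $\CH(\wh A)/\tn{torsion}$. For $(8)$, I would observe that a motivic weak integral Fourier transform intertwines the Pontryagin and intersection products up to a global sign; this transports the Moonen--Polishchuk PD-structure on $(\CH_{>0}(\wh A),\star)/\tn{torsion}$ to a PD-structure on $(\CH^{>0}(A), \cdot)/\tn{torsion}$, giving $(2) \Rightarrow (8)$, while $(8) \Rightarrow (5)$ is immediate via $\Theta^{[g-1]}$.

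The step I expect to be the main obstacle is $(4) \Rightarrow (1)$. The reverse $(2) \Rightarrow (4)$ is merely reading off the codimension-$(2g-2)$ component of an integral lift of $\ch(\ca P_A)$, but to convert an integral lift of $\mr S_A$ back to one of $\mr R_A$ one must go beyond the naive identities $\ell \cdot \mr S_A = (2g-1) \mr R_A$ and $\mr R_A^{\star 2} = (-1)^g \cdot 2 \mr S_A$, the latter derived from the identification $\mr S_A = (-1)^g \mr R_A^{[2]}$ obtained by extracting the appropriate graded piece of Lemma~\ref{lemma:crucialprop}. My plan would be to combine these two witnesses, which separately yield integral lifts of $(2g-1) \mr R_A$ and of $\mr R_A^{\star 2}$, using the coprimality $\gcd(2, 2g-1) = 1$ and the Pontryagin PD-structure to pin down $\mr R_A$ modulo torsion; an alternative route would be to derive a direct analogue of Lemma~\ref{lemma:minimalclasspoincarecomparison} expressing $\mr S_A$ in terms of the Pontryagin divided powers of $j_{1,*} \Gamma_\Theta$, $j_{2,*} \Gamma_{\wh\Theta}$, and $(\id,\lambda)_* \Gamma_\Theta$ (which exist integrally by Moonen--Polishchuk applied to $\Gamma_\Theta \in \CH_1(A)$), and then to extract $\Gamma_\Theta$ by appropriate pullback operations, thereby reducing $(4)$ to $(5)$.
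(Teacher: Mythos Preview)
Your overall architecture is sound and largely matches the paper: the equivalences $(1)\Leftrightarrow(2)\Leftrightarrow(3)$ via Lemma~\ref{lemma:crucialprop} and the PD-structure, $(5)\Rightarrow(1)$ via Lemma~\ref{lemma:minimalclasspoincarecomparison}, $(2)\Rightarrow(5)$ by applying the integral correspondence to $c_1(\ca L)$, and the chain through $(6),(7),(8)$ all go through essentially as in the paper. Your route $(3)\Rightarrow(2)$ by Gysin restriction along a section is a nice alternative to the paper's use of $(-1)^g\ca F_{\wh A\times A}(-\wh\ell)=\mr R_A$; however, the specific map $(a,\hat a)\mapsto(a,\hat a,0,0)$ restricts $\ca P_{A\times\wh A}$ to the trivial bundle, not to $\ca P_A$ (since both $\pi_{13}$ and $\pi_{24}$ then land in the zero fibre of a normalised Poincar\'e bundle). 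The correct section is $(a,\hat a)\mapsto(a,0,\hat a,0)$, under which $\pi_{13}^*\ca P_A$ restricts to $\ca P_A$ and $\pi_{24}^*\ca P_{\wh A}$ becomes trivial.

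The genuine gap is $(4)\Rightarrow(5)$. Neither of your proposed routes works. Route~1 produces integral lifts of $(2g-1)\mr R_A$ and of $\mr R_A^{\star 2}$, but there is no mechanism to extract a lift of $\mr R_A$ itself from these: the PD-structure only lets you take divided powers of classes you already have integrally, and a B\'ezout argument would require a lift of $2\mr R_A$, not of $\mr R_A^{\star 2}$. Route~2 is circular: the Pontryagin divided powers of $j_{1,*}\Gamma_\Theta$ etc.\ exist integrally only once you know $\Gamma_\Theta\in\CH_1(A)_\QQ$ lifts, which is exactly~$(5)$. The paper's device is direct and avoids these difficulties: an integral lift $S_A\in\CH_2(A\times\wh A)$ of $\mr S_A$ defines a correspondence
\[
\ca F\colon \CH^{1,0}(A)\xrightarrow{\pi_1^*}\CH^1(A\times\wh A)\xrightarrow{\cdot S_A}\CH_1(A\times\wh A)\xrightarrow{\pi_{2,*}}\CH_1(\wh A),
\]
and since $\ca F_A$ carries $\CH^{1,0}(A)_\QQ$ into $\CH_1(\wh A)_\QQ$, the only component of $\ch(\ca P_A)$ that contributes to this composite rationally is $\mr S_A$; hence $\ca F(c_1(\ca L))_\QQ=\ca F_A(\Theta)=(-1)^{g-1}\Gamma_{\wh\Theta}$, and pulling back along $\lambda$ gives the integral lift of $\Gamma_\Theta$.
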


\begin{proof}
Suppose that \ref{motivicone} holds, and let $\Gamma \in \CH_1(A \times \wh A)$ be a cycle such that $\Gamma_\QQ = \mr R_A$. Then consider the cycle $(-1)^g\cdot \rm E((-1)^g\cdot\Gamma) \in \CH(A \times \wh A)$. By Lemma \ref{lemma:crucialprop}, we have
\[
(-1)^g\cdot \rm E((-1)^g\cdot \Gamma)_{\QQ} = (-1)^g\cdot \rm E((-1)^g\cdot \Gamma_{\QQ}) = (-1)^g\cdot \rm E((-1)^g\cdot \mr R_A) = \ch(\ca P_A) \in \CH(A \times \wh A)_{\QQ}.
\]
Thus \ref{motivictwo} holds. We claim that \ref{motivicthree} holds as well. Indeed, consider the line bundle $\ca P_{A \times \wh A}$ on the abelian variety $X= A \times \wh A \times \wh A \times A$; one has that $\ca P_{A \times \wh A} \cong \pi_{13}^\ast\ca P_A \otimes \pi_{24}^\ast \ca P_{\wh A}$, which implies that 
{\footnotesize \begin{equation}\label{eq1}
\begin{split}
\mr R_{A \times \wh A} &= \frac{1}{(4g-1)!}\cdot \left( 
\pi_{13}^\ast c_1(\ca P_A) + \pi_{24}^\ast c_1(\ca P_{\wh A})
\right)^{4g-1} \\
&= 
\frac{1}{(2g)!(2g-1)!} \cdot 
\left(
\pi_{13}^\ast c_1(\ca P_A)^{2g-1} \cdot \pi_{24}^\ast c_1(\ca P_{\wh A})^{2g}
+
\pi_{13}^\ast c_1(\ca P_A)^{2g} \cdot \pi_{24}^\ast c_1(\ca P_{\wh A})^{2g-1}
\right) \\
&= 
\frac{1}{(2g)!(2g-1)!} \cdot 
\left(
\pi_{13}^\ast c_1(\ca P_A)^{2g-1} \cdot \pi_{24}^\ast \left((2g)! \cdot [0]_{A \times \wh A}\right)
+
\pi_{13}^\ast \left(
(2g)! \cdot [0]_{\wh A \times A}
\right)
\cdot \pi_{24}^\ast c_1(\ca P_{\wh A})^{2g-1}
\right) \\
&= 
\pi_{13}^\ast \left( \frac{c_1(\ca P_A)^{2g-1}}{(2g-1)!} \right)\cdot \pi_{24}^\ast ([0]_{A \times \wh A})
+
\pi_{13}^\ast (
[0]_{\wh A \times A}
)
\cdot \pi_{24}^\ast \left( \frac{c_1(\ca P_{\wh A})^{2g-1}}{(2g-1)!} \right) \quad 
 \in \; \CH_1(X)_\QQ.
 \end{split}
\end{equation}}
We conclude that $\mr R_{A \times \wh A}$ lifts to $\CH_1(X)$ which, by the implication $[\ref{motivicone} \implies \ref{motivictwo}]$ (that has already been proved), implies that \ref{motivicthree} holds. 
The implication $[\ref{motivicthree}\implies\ref{motivicone}]$ follows from the fact that $(-1)^g \cdot \ca F_{\wh A \times A}(- c_1(\ca P_{\wh A})) = \mr R_A$ (see Equation (\ref{importantequation})). 
Therefore, we have $[\ref{motivicone} \iff \ref{motivictwo} \iff \ref{motivicthree}]$. 

Let us from now on assume that $A$ is principally polarized by $\lambda \colon A \xrightarrow{\sim} A$, where $\lambda$ is the polarization attached to a symmetric ample line bundle $\ca L$ on $A$. Moreover, in what follows we shall identify $\wh A$ and $A$ via $\lambda$. 

Suppose that \ref{motiviczero} holds and let $S_{ A} \in \CH_2(A\times  A) = \CH^{2g-2}(A \times A)$ be such that 
$$(S_{A})_\QQ = c_1(\ca P_A)^{2g-2}/(2g-2)! \in \CH_2(A \times A)_\QQ.
$$
Define $\CH^{1,0}(A) \coloneqq \Pic^{\textnormal{sym}}(A)$ to be the group of isomorphism classes of symmetric line bundes on $A$. Then $S_{A}$ induces a homomorphism $\ca F \colon \CH^{1,0}(A) \to \CH_1( A)$ defined as the composition 
\[
\ca F \colon \CH^{1,0}(A) \xrightarrow{\pi_1^\ast} \CH^1(A \times  A) \xrightarrow{\cdot S_{A}} \CH^{2g-1}(A \times A) = \CH_1(A \times A) \xrightarrow{\pi_{2,\ast}} \CH_1(A).
\]
Since $\ca F_A \left(\CH^{1,0}(A)_\QQ\right) \subset \CH_1( A)_\QQ$ (see \cite[Lemma 2.18]{deningermurre}) we see that the diagram 
\begin{equation}
\begin{split}
\label{commmuttttivitiy}
\xymatrix{
\CH^{1,0}(A)\ar[d] \ar[r]^{\ca F} & \CH_1( A)\ar[d] \\
\CH^{1,0}(A)_\QQ \ar[r]^{\ca F_A} & \CH_1( A)_{\QQ}
}
\end{split}
\end{equation}
commutes. On the other hand, since the line bundle $\ca L$ is symmetric, we have 
\begin{equation}\label{liftoftheta}
\Theta = \frac{1}{2}\cdot \Delta^\ast c_1(\ca P_A) = \frac{1}{2}\cdot  c_1\left( \Delta^\ast\ca P_A\right)  = \frac{1}{2} \cdot c_1(\ca L \otimes \ca L) = c_1(\ca L) \in \CH^1(A)_{\QQ}. 
\end{equation}
The class $c_1(\ca L) \in \CH^{1,0}(A)$ of the line bundle $\ca L$ thus lies above $\Theta \in \CH^1(A)_\QQ$. Therefore, $\ca F(c_1(\ca L)) \in \CH_1(A)$ lies above $\Gamma_\Theta = (-1)^{g-1}\ca F_{A}(\Theta)$ by the commutativity of (\ref{commmuttttivitiy}), and \ref{motivicfour} holds.

Suppose that \ref{motivicfour} holds. Then \ref{motivicone} follows readily from Lemma \ref{lemma:minimalclasspoincarecomparison}. Moreover, if \ref{motivictwo} holds, then $\ch(\ca P_A) \in \CH(A \times A)_\QQ$ lifts to $\CH(A \times A)$, hence in particular \ref{motiviczero} holds. Since we have already proved that \ref{motivicone} implies \ref{motivictwo}, we conclude that $[\ref{motiviczero} \implies  \ref{motivicfour} \implies \ref{motivicone} \implies \ref{motivictwo} \implies \ref{motiviczero}]$. 

The implications $[\ref{motivictwo}\implies\ref{motivicfive} \implies \ref{motivicsix}]$ are trivial. Assume that \ref{motivicsix} holds. By Equation (\ref{liftoftheta}), $\Theta \in \CH^1(A)_{\QQ}$ lifts to $\CH^1(A)$, hence $\ca F_{A}(\Theta) = (-1)^{g-1}\cdot \Gamma_\Theta$ lifts to $\CH_1(A)$, i.e.\ \ref{motivicfour} holds. 

Assume that \ref{motivicsix} holds. The fact that $    \ca F_{A}\left( 
    \CH(A)/\textnormal{torsion} 
    \right) \subset \CH(A)/\textnormal{torsion}$ implies that 
\begin{equation*}
    \CH(A)/\textnormal{torsion} = \ca F_{A}\left( \ca F_{ A} \left( 
    \CH( A)/\textnormal{torsion} 
    \right) \right) 
    \subset 
    \ca F_{A}\left( 
    \CH(A)/\textnormal{torsion} 
    \right) \subset \CH(A)/\textnormal{torsion}.
\end{equation*}
Thus, the restriction of the Fourier transform $\ca F_{A}$ to $\CH(A)/\textnormal{torsion}$ defines an isomorphism $$\ca F_{A}\colon \CH(A)/\textnormal{torsion} \xrightarrow{\sim} \CH( A)/\textnormal{torsion}.$$ Now if $R$ is a ring and $\gamma$ is a PD-structure on an ideal $I \subset R$, then $\gamma$ extends to a PD-structure on $I/\textnormal{torsion} \subset R/\textnormal{torsion}$. Consequently, the ideal $\CH_{>0}(A)/\textnormal{torsion} \subset \CH(A)/\textnormal{torsion}$ admits a PD-structure for the Pontryagin product $\star$ by Theorem \ref{th:PDstructure}. Since $\ca F_A$ exchanges the Pontryagin and intersection product (up to a sign, see \cite[Proposition~3(ii)]{beauvillefourier}), it follows that \ref{motiviceight} holds. Since \ref{motiviceight} trivially implies \ref{motivicfour}, we are done. 
\end{proof}

\begin{question}[Moonen--Polishchuk \cite{moonenpolishchuk}, Totaro \cite{totaroIHCthreefolds}] \label{questionmoonenpolishchuk}
Let $A$ be any principally polarized abelian variety over $k = \bar k$. Are the equivalent conditions in Theorem \ref{th:motivic} satisfied for $A$? 
\end{question}

\begin{remark}
For Jacobians of hyperelliptic curves the answer to Question \ref{questionmoonenpolishchuk} is "yes" \cite{moonenpolishchuk}. 
\end{remark}

Similarly, there is a relation between integral Fourier transforms up to homology and the algebraicity of minimal cohomology classes induced by Poincar\'e line bundles and theta divisors. 

\begin{proposition} \label{prop:motivicetale-new}
Let $A_{/k}$ be an abelian variety of dimension $g$. The following are equivalent:
\begin{enumerate}[wide, labelwidth=!, labelindent=0pt]
    \item \label{motivicone-new} The class $c_1(\ca P_A)^{2g-1}/(2g-1)! \in \rm H^{4g-2}_{\etale}((A \times \wh A)_{k_s}, \ZZ_\ell(2g-1))$ lifts to $\CH_1(A \times \wh A)$. 
    \item \label{motivictwo-new} The class $\ch(\ca P_A) \in \oplus_{r \geq 0}\rm H_{\textnormal{\'{e}t}}^{2r}((A \times \wh A)_{k_s}, \ZZ_\ell(r))$ lifts to a cycle in $\CH(A \times \wh A)$.  
    \item \label{motivicthree-new} 
   The class $$\ch(\ca P_{A \times \wh A}) \; \in \; \oplus_{r \geq 0}\rm H_{\textnormal{\'{e}t}}^{2r}((A \times \wh A \times \wh A \times A)_{k_s}, \ZZ_\ell(r))$$ lifts to a cycle in $\CH(A \times \wh A \times \wh A \times A)$. 
\end{enumerate}
Moreover, if $A$ carries an ample line bundle that induces a principal polarization $\lambda \colon A \xrightarrow{\sim} \wh A$, then 
the above statements are equivalent to the following equivalent statements:
\begin{enumerate}[wide, labelwidth=!, labelindent=0pt]
\setcounter{enumi}{3}
    \item \label{motiviczero-new} The class $ c_1(\ca P_A)^{2g-2}/(2g-2)! \in \rm H_{\etale}^{4g-4}((A \times \wh A)_{k_s}, \ZZ_\ell(2g-2))$ lifts to $\CH_2(A \times \wh A)$.
    \item \label{motivicfour-new} The class $\gamma_\theta  = \theta^{g-1}/(g-1)! \in \rm H^{2g-2}_{\etale}(A_{k_s}, \ZZ_\ell(g-1))$ lifts to a cycle in $\CH_1(A)$. 
    \item \label{motivicfive-new} The abelian variety $A$ admits an integral Fourier transform up to homology.  
    \item \label{motivicsix-new} The Fourier transform $\mr F_A$ 
    satisfies $\mr F_A\left( \rm H^{2\bullet}_{\et}(A_{k_s}, \ZZ_\ell(\bullet))_{\tn{alg}}\right) \subset \rm H^{2\bullet}_{\et}(\wh A_{k_s}, \ZZ_\ell(\bullet))_{\tn{alg}}$.  
    \item \label{motiviceight-new} There exists a PD-structure on the ideal 
    $
\oplus_{j > 0} \rm H^{2j}_{\et}(A_{k_s}, \ZZ_\ell(j))_{\tn{alg}} \subset 
\rm H^{2\bullet}_{\et}(A_{k_s}, \ZZ_\ell(\bullet))_{\tn{alg}}$. 
\end{enumerate}
Here, $\rm H_{\textnormal{\'{e}t}}^{2\bullet}(A_{k_s}, \ZZ_\ell(\bullet))_{\tn{alg}}$ denotes the 
image of the cycle class map 
    $
\CH^\bullet(A) \to \rm H_{\textnormal{\'{e}t}}^{2\bullet}(A_{k_s}, \ZZ_\ell(\bullet)). 
    $
\end{proposition}

\begin{proof}[Proof of Proposition \ref{prop:motivicetale-new}]
The proof of Theorem \ref{th:motivic} can easily be adapted to this situation.  
\end{proof}

\newpage

\begin{proposition} \label{remarketalebetti}
\begin{enumerate}[wide, labelwidth=!, labelindent=0pt]
\item \label{remark3.12.1}
If $k = \CC$, then each of the statements $\ref{motivicone-new} - \ref{motiviceight-new}$ in Proposition \ref{prop:motivicetale-new} is equivalent to the same statement with \'etale cohomology replaced by Betti cohomology. 
\item \label{remark3.12.2}
Proposition \ref{prop:motivicetale-new} remains valid if one replaces integral Chow groups by their tensor product with $\ZZ_\ell$, 
`integral Fourier transform up to homology' by `$\ell$-adic integral Fourier transform up to homology', and $\rm H_{\textnormal{\'{e}t}}^{2\bullet}(A_{k_s}, \ZZ_\ell(\bullet))_{\tn{alg}}$ 
by the image of the map $
\CH^\bullet(A) \otimes \ZZ_\ell \to \rm H_{\textnormal{\'{e}t}}^{2\bullet}(A_{k_s}, \ZZ_\ell(\bullet)). 
    $
\end{enumerate}
\end{proposition}
\begin{proof}
\begin{enumerate}[wide, labelwidth=!, labelindent=0pt]
\item 
In this case $\ZZ_\ell(i) = \ZZ_\ell$ and the Artin comparison isomorphism \[\rm H^{2i}_{\textnormal{\'et}}(A, \ZZ_\ell) \xrightarrow{\sim} \rm H^{2i}(A(\CC), \ZZ_\ell)\] \cite[III, Expos\'e XI]{SGA4} is compatible with the cycle class map. Since the map $\rm H^{2i}(A(\CC), \ZZ) \to \rm H_{\textnormal{\'et}}^{2i}(A, \ZZ_\ell)$ is injective, a class $\beta \in \rm H^{2i}(A(\CC), \ZZ)$ is in the image of $cl\colon \CH^i(A) \to \rm H^{2i}(A(\CC),\ZZ)$ if and only if its image $\beta_\ell \in \rm H^{2i}_{\textnormal{\'et}}(A, \ZZ_\ell)$ is in the image of $cl\colon \CH^i(A) \to \rm H^{2i}_{\textnormal{\'et}}(A, \ZZ_\ell)$. 
\item 
Indeed, for an abelian variety $A$ over $k$, the PD-structure on $\CH_{>0}(A) \subset (\CH(A), \star)$ induces a PD-structure on $\CH_{>0}(A) \otimes \ZZ_\ell \subset (\CH(A)_{\ZZ_\ell}, \star)$ by \cite[\href{https://stacks.math.columbia.edu/tag/07H1}{Tag 07H1}]{stacks-project}, because the ring map $(\CH(A),\star) \to (\CH(A)_{\ZZ_\ell},\star)$ is flat. The latter follows from the flatness of $\ZZ \to \ZZ_\ell$.
\end{enumerate}
\end{proof}


\section{The integral Hodge conjecture for one-cycles on complex abelian varieties} \label{sec:integralhodgeconjecture}

In this section we use the theory developed in Section \ref{sec:two} to prove Theorem \ref{maintheorem}. We also prove some applications of Theorem \ref{maintheorem}: the integral Hodge conjecture for one-cycles on products of Jacobians (Theorem \ref{introth:IHCforjacobians}), the fact that the integral Hodge conjecture for one-cycles on principally polarized complex abelian varieties is stable under specialization (Corollary \ref{complexspecialization}) and density of polarized abelian varieties satisfying the integral Hodge conjecture for one-cycles (Theorem \ref{introth:density}). 

\subsection{Proof of the main theorem} \label{sec:proofmaintheorem}

Let us prove Theorem \ref{maintheorem}. 

\begin{proof}[Proof of Theorem \ref{maintheorem}]
Suppose that \ref{introitem:minimalpoincare} holds. Then \ref{introitem:integralpoincare} holds by Propositions \ref{prop:motivicetale-new} and \ref{remarketalebetti}.\ref{remark3.12.1}. Suppose that \ref{introitem:integralpoincare} holds. Then 
\ref{introitem:IHC} follows from Lemma \ref{lemma:trivial}. So we have $[\ref{introitem:minimalpoincare} \iff \ref{introitem:integralpoincare}  \implies \ref{introitem:IHC}]$. 

For a complex abelian variety $X$ of dimension $g$, define
\[
\rho_X = c_1(\ca P_X)^{2g-1}/(2g-1)! \in \rm H^{4g-2}(X \times \wh X, \ZZ). 
\]
If $\ref{introitem:minimalpoincare}$ holds, then $\rho_A = c_1(\ca P_A)^{2g-1}/(2g-1)! \in \rm H^{4g-2}(A \times \wh A, \ZZ)$ is algebraic, which implies that $\rho_{\wh A} \in \rm H^{4g-2}(\wh A \times A, \ZZ)$ is algebraic. Therefore, $$\rho_{A \times \wh A} 
\in \rm H^{8g-2}(A \times \wh A \times \wh A \times A, \ZZ)$$ is algebraic by Equation (\ref{eq1}). We then apply the implication $[\ref{introitem:minimalpoincare}  \implies \ref{introitem:IHC}]$ to the abelian variety $A \times \wh A$, which shows that \ref{introitem:integralhodgeforproduct} holds. Since $[\ref{introitem:integralhodgeforproduct}  \implies \ref{introitem:minimalpoincare}]$ is trivial, we have proven $[\ref{introitem:minimalpoincare} \iff \ref{introitem:integralpoincare}  \iff \ref{introitem:integralhodgeforproduct} \implies  \ref{introitem:IHC}]$. 

Next, assume that $A$ is principally polarized by $\theta \in \textnormal{NS}(A) \subset \rm H^2(A, \ZZ)$. The directions $[\ref{introitem:IHC}\implies \ref{introitem:minimalclass}]$ and $[\ref{introitem:integralpoincare} \implies \ref{introitem:minimalpoincare2}]$ are trivial and $[\ref{introitem:minimalclass} \implies \ref{introitem:minimalpoincare}]$ follows from Propositions \ref{prop:motivicetale-new} and \ref{remarketalebetti}.\ref{remark3.12.1}. We claim that \ref{introitem:minimalpoincare2} implies \ref{introitem:IHC}. Define $\sigma_A = c_1(\ca P_A)^{2g-2}/(2g-2)! \in \rm H^{4g-4}(A \times \wh A,\ZZ)$ and let $S \in \CH_2(A \times \wh A)$ be such that $cl(S) =  \sigma_A$. The squares in the following diagram commute:
\begin{equation} \label{diagramwithonlysurfaceclasses}
\begin{split}
\xymatrixcolsep{4pc}
\xymatrix{
\CH^1(A) \ar[d]^{cl}\ar[r]^{\pi_1^\ast}& \CH^1(A \times \wh A) \ar[d]^{cl}\ar[r]^{\cdot S}& \CH^{2g-1}(A \times \wh A) \ar[d]^{cl} \ar[r]^{\pi_{2,\ast}} & \CH_1(\wh A) \ar[d]^{cl} \\
\rm H^2(A,\ZZ) \ar[r]^{\pi_1^\ast} & \rm H^{2}(A \times \wh A,\ZZ) \ar[r]^{\cdot \sigma_A} & \rm H^{4g-2}(A \times \wh A,\ZZ) \ar[r]^{\pi_{2,\ast}} & \rm H^{2g-2}(\wh A,\ZZ). 
}
\end{split}
\end{equation}
Since $\mr F_A = \pi_{2,\ast}\left( \ch(\ca P_A) \cdot \pi_1^\ast(-)\right)$ 
restricts to an isomorphism $\mr F_A\colon \rm H^2(A,\ZZ) \xrightarrow{\sim} \rm H^{2g-2}(\wh A,\ZZ)$ by \cite[Proposition 1]{beauvillefourier}, the composition $\pi_{2,\ast} \circ (- \cdot \sigma_A) \circ \pi_1^\ast$ on the bottom row of (\ref{diagramwithonlysurfaceclasses}) is an isomorphism. Thus, by Lefschetz $(1,1)$, $cl\colon \CH_1(\wh A) \to \Hdg^{2g-2}(\wh A,\ZZ)$ is surjective. 

Finally, the equivalence $[\ref{introitem:minimalclass} \iff \ref{introitem:last}]$ follows directly from Propositions \ref{prop:motivicetale-new} and \ref{remarketalebetti}.\ref{remark3.12.1}. 
\end{proof}

\begin{corollary} \label{cor:IHCforproducts}
Let $A$ and $B$ be complex abelian varieties of respective dimensions $g_A$ and $g_B$. 
\begin{enumerate}[wide, labelwidth=!, labelindent=0pt]
    \item The Hodge classes $$c_1(\ca P_A)^{2g_A-1}/(2g_A-1)! \in \rm H^{4g_A-2}(A \times \wh A, \ZZ) \quad \tn{ \emph{and} } \quad c_1(\ca P_B)^{2g_B-1}/(2g_B-1)! \in \rm H^{4g_B-2}(B \times \wh B, \ZZ)$$ are algebraic if and only if $A \times \wh A$, $B \times \wh B$, $A\times B$ and $\wh A \times \wh B$ satisfy the integral Hodge conjecture for one-cycles. 
    \item If $A$ and $B$ are principally polarized, then the integral Hodge conjecture for one-cycles holds for $A \times B$ if and only if it holds for $A$ and $B$. 
\end{enumerate}
\end{corollary}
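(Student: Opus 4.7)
The plan is to reduce both parts of the corollary to Theorem~\ref{maintheorem} by observing that the relevant Poincaré and minimal cohomology classes on a product of abelian varieties decompose into contributions from each factor. The key input is the equivalence $(\ref{introitem:minimalpoincare})\iff(\ref{introitem:integralhodgeforproduct})$ of Theorem~\ref{maintheorem}, together with the equivalence $(\ref{introitem:minimalclass})\iff(\ref{introitem:IHC})$ in the principally polarized case. I will also use a standard descent principle: if $X\times Y$ satisfies the integral Hodge conjecture for one-cycles, then so do $X$ and $Y$. Indeed, given $\alpha\in\Hdg^{2\dim X-2}(X,\ZZ)$, the class $\pi_X^\ast\alpha\cdot\pi_Y^\ast[0]_Y$ is a one-cycle Hodge class on $X\times Y$, hence algebraic by hypothesis, and pushforward along $\pi_X$ together with the projection formula recovers $\alpha$.

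For the first bullet, the implication from right to left is immediate: IHC for $A\times\wh A$ and $B\times\wh B$ gives algebraicity of $\rho_A$ and $\rho_B$ by $(\ref{introitem:integralhodgeforproduct})\Rightarrow(\ref{introitem:minimalpoincare})$. For the converse, assume $\rho_A$ and $\rho_B$ are algebraic. Then $(\ref{introitem:minimalpoincare})\Rightarrow(\ref{introitem:integralhodgeforproduct})$ yields IHC for one-cycles on $A\times\wh A$ and on $B\times\wh B$. To handle $A\times B$ and $\wh A\times\wh B$, I would reproduce the computation of Equation~(\ref{eq1}) for the product $X=A\times B$ with dual $\wh X=\wh A\times\wh B$: since $\ca P_{A\times B}\cong\pi_{13}^\ast\ca P_A\otimes\pi_{24}^\ast\ca P_B$ on $A\times B\times\wh A\times\wh B$, binomial expansion of $c_1(\ca P_{A\times B})^{2(g_A+g_B)-1}/(2(g_A+g_B)-1)!$ leaves only two surviving terms, giving
\begin{equation*}
\rho_{A\times B}=\pi_{13}^\ast(\rho_A)\cdot\pi_{24}^\ast[0]_{B\times\wh B}+\pi_{13}^\ast[0]_{A\times\wh A}\cdot\pi_{24}^\ast(\rho_B),
\end{equation*}
which is algebraic. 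Applying $(\ref{introitem:minimalpoincare})\Rightarrow(\ref{introitem:integralhodgeforproduct})$ to $A\times B$ then yields IHC for one-cycles on $(A\times B)\times(\wh A\times\wh B)$, and the descent principle above produces IHC for one-cycles on $A\times B$ and on $\wh A\times\wh B$.

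For the second bullet, the descent principle already gives one direction. Conversely, if $A$ and $B$ satisfy IHC for one-cycles and are principally polarized by $\theta_A$ and $\theta_B$, then $(\ref{introitem:IHC})\Rightarrow(\ref{introitem:minimalclass})$ gives algebraicity of $\gamma_{\theta_A}$ and $\gamma_{\theta_B}$. The product polarization $\theta_{A\times B}=\pi_A^\ast\theta_A+\pi_B^\ast\theta_B$ yields, via the binomial theorem together with $\theta_A^{g_A}=g_A!\cdot[0]_A$ and $\theta_B^{g_B}=g_B!\cdot[0]_B$, the identity
\begin{equation*}
\gamma_{\theta_{A\times B}}=\gamma_{\theta_A}\times[0]_B+[0]_A\times\gamma_{\theta_B},
\end{equation*}
only the binomial indices $(g_A,g_B-1)$ and $(g_A-1,g_B)$ contributing nontrivially. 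Hence $\gamma_{\theta_{A\times B}}$ is algebraic, and $(\ref{introitem:minimalclass})\Rightarrow(\ref{introitem:IHC})$ applied to $A\times B$ concludes the proof.

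The main obstacle is purely bookkeeping: identifying the surviving terms in each binomial expansion and verifying the descent principle via the projection formula. Both steps are essentially formal, so given Theorem~\ref{maintheorem} the corollary follows without further geometric input.
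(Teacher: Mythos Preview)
Your proof is correct and follows essentially the same approach as the paper: both bullets reduce to Theorem~\ref{maintheorem} via the decomposition formulas for $\rho_{A\times B}$ and $\gamma_{\theta_{A\times B}}$, which is exactly what the paper invokes through Equation~(\ref{eq1}) and the product behavior of minimal classes. Your explicit descent principle is a valid way to extract IHC for the factors, though you could equally well use the implication $(\ref{introitem:minimalpoincare})\Rightarrow(\ref{introitem:IHC})$ of Theorem~\ref{maintheorem} directly for $A\times B$ (since its dual is $\wh A\times\wh B$), avoiding the detour through $(A\times B)\times(\wh A\times\wh B)$.
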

\begin{proof}
The first statement follows from Theorem \ref{maintheorem} and Equation (\ref{eq1}). The second statement follows from the fact that the minimal cohomology class of the product $A \times B$ is algebraic if and only if the minimal cohomology classes of the factors $A$ and $B$ are both algebraic.
\end{proof}

\begin{proof}[Proof of Theorem \ref{introth:IHCforjacobians}]
By Corollary \ref{cor:IHCforproducts} we may assume $n = 1$, so let $C$ be a smooth projective curve. Let $p \in C$ and consider the morphism $\iota\colon C \to J(C)$ defined by sending a point $q$ to the isomorphism class of the degree zero line bundle $\OO(p-q)$. 
Then $cl(\iota(C)) = \gamma_\theta \in \rm H^{2g-2}(J(C), \ZZ)$ by Poincar\'e's formula \cite{arbarello1985geometry}, so $\gamma_\theta$ is algebraic and the result follows from Theorem \ref{maintheorem}. 
\end{proof}

\begin{remarks} \label{symmetricpowerremark}
\begin{enumerate}[wide, labelwidth=!, labelindent=0pt]
    \item \label{symmetricremarkone} Let us give another proof of Theorem \ref{introth:IHCforjacobians} in the case $n = 1$, i.e.\ let $C$ be a smooth projective curve of genus $g$ and let us prove the integral Hodge conjecture for one-cycles on $J(C)$ in a way that does not use Fourier transforms. It is classical that any Abel-Jacobi map $C^{(g)} \to J(C)$ is birational. On the other hand, the integral Hodge conjecture for one-cycles is a birational invariant, see \cite[Lemma 15]{voisin_someaspectsofthehodgeconjecture}. Therefore, to prove it for $J(C)$ it suffices to prove it for $C^{(g)}$. One then uses \cite[Corollary 5]{delbanohodgecurve} which says that for each $n \in \ZZ_{\geq 1}$, there is a natural polarization $\eta$ on the $n$-fold symmetric product $C^{(n)}$ such that for any $i \in \ZZ_{\geq 0}$, the map $\eta^{n-i}\cup (-) \colon \rm H^i(C^{(n)}, \ZZ) \to \rm H^{2n-i}(C^{(n)}, \ZZ)$ is an isomorphism. In particular, the variety $C^{(n)}$ satisfies the integral Hodge conjecture for one-cycles for any positive integer $n$. 
    \item \label{symmetricremarktwo} Along these lines, observe that the integral Hodge conjecture for one-cycles holds not only for symmetric products of smooth projective complex curves but also for any product $C_1 \times \cdots \times C_n$ of smooth projective curves $C_i$ over $\CC$. Indeed, this follows readily from the K\"unneth formula. 
    \item Let $C$ be a smooth projective complex curve of genus $g$. Our proof of Theorem \ref{maintheorem} provides an explicit description of $\Hdg^{2g-2}(J(C),\ZZ)$ depending on $\Hdg^2(J(C),\ZZ)$. More generally, let $(A,\theta)$ be a principally polarized abelian variety of dimension $g$, and identify $A$ and $\wh A$ via the polarization. 
    Then $c_1(\ca P_A) = m^\ast(\theta) - \pi_1^\ast(\theta) - \pi_2^\ast(\theta)$, which implies that 
    \begin{align*}
\frac{1}{(2g-2)!} \cdot c_1(\ca P_A)^{2g-2} &= \sum_{\substack{i,j,k \geq 0\\ i + j + k = 2g-2}}^{2g-2}
(-1)^{j+k}\cdot  m^\ast\left( \frac{\theta^i}{i!}\right)\cdot \pi_1^\ast\left(\frac{\theta^j}{j!}\right)\cdot \pi_2^\ast \left(\frac{\theta^{k}}{k!}\right). 
    \end{align*}
    On the other hand, any $\beta \in \Hdg^{2g-2}(A,\ZZ)$ is of the form $$\beta = \pi_{2,\ast} \left( \frac{c_1(\ca P_A)^{2g-2}}{(2g-2)!}  \cdot \pi_1^\ast [D] \right) \in \Hdg^{2g-2}(A,\ZZ),$$ where we write $[D] = cl(D)$ for a divisor $D$ on $A$, as follows from \eqref{diagramwithonlysurfaceclasses}. Therefore, any element $\beta \in \Hdg^{2g-2}(A,\ZZ)$ may be written as
        \begin{equation} \label{beta}
\beta = \sum_{\substack{i,j,k \geq 0\\ i + j + k = 2g-2}}^{2g-2}
(-1)^{j+k}\cdot \pi_{2,\ast}\left( m^\ast\left( \frac{\theta^i}{i!}\right)\cdot \pi_1^\ast\left(\frac{\theta^j}{j!}\right)\cdot \pi_1^\ast[D]\right) \cdot \frac{\theta^{k}}{k!}. 
    \end{equation}

Returning to the case of a Jacobian $J(C)$ of a smooth projective curve $C$ of genus $g$, the classes $\theta^i/i!$ appearing in (\ref{beta}) are effective algebraic cycle classes. Indeed, for $p \in C$ and $d \in \ZZ_{\geq 1}$, the image of the morphism $C^{d} \to J(C)$, $(x_i) \mapsto \ca O(\sum_ix_i-d\cdot p)$ defines a subvariety $W_d(C) \subset J(C)$ and by Poincar\'e's formula \cite[\S I.5]{arbarello1985geometry} one has $cl(W_d(C)) = \theta^{g-d}/(g-d)! \in \rm H^{2g-2d}(J(C), \ZZ)$.

\end{enumerate}
\end{remarks}

Besides Theorem \ref{introth:IHCforjacobians}, we obtain the following corollary of Theorem \ref{maintheorem}:

\begin{corollary} \label{complexspecialization}
Let $A\to S$ be a principally polarized abelian scheme over a proper, smooth and connected variety $S$ over $\CC$. 
Let $X \subset S(\CC)$ be the set of $x \in S(\CC)$ such that the abelian variety $A_x$ satisfies the integral Hodge conjecture for one-cycles. Then $X = \cup_iZ_i(\CC)$ for some countable union of closed algebraic subvarieties $Z_i \subset S$. 
In particular, if the integral Hodge conjecture for one-cycles holds on $U(\CC)$ for a non-empty open subscheme $U$ of $S$, then it holds on all of $S(\CC)$. 
\end{corollary}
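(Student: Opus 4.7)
The plan is to reduce, via Theorem \ref{maintheorem}, to a single algebraicity condition on the fibers that assembles into a global section of a local system over $S$, and then to parameterize algebraic realizations of this condition by countably many projective morphisms over $S$.

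By the equivalence of \ref{introitem:IHC} and \ref{introitem:minimalclass} in Theorem \ref{maintheorem}, the integral Hodge conjecture for one-cycles on a fiber $A_s$ holds if and only if the minimal class $\gamma_{\theta_s} = \theta_s^{g-1}/(g-1)!$ is algebraic. As $s$ varies these classes fit together into a single global object: the polarization of the family yields $\theta \in \Gamma(S, R^2\pi_\ast \ZZ)$, and hence a global section $\gamma_\theta \coloneqq \theta^{g-1}/(g-1)! \in \Gamma(S, R^{2g-2}\pi_\ast \ZZ)$ that restricts fiberwise to $\gamma_{\theta_s}$. So the first step will be to rewrite $X$ as the set of points at which $\gamma_\theta$ restricts to an algebraic class on the fiber.

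Next, I would parameterize effective one-cycles on the fibers in an algebraic family. Since the polarized abelian scheme $A/S$ is projective and $S$ is proper, the relative Hilbert scheme of one-dimensional subschemes of $A/S$ decomposes as a countable disjoint union of connected projective $S$-schemes $H_n$. Forming $T_{m,n} \coloneqq H_m \times_S H_n$ with its universal pair $(\ca C^+, \ca C^-)$ of effective relative one-cycles, the difference of cycle classes $[\ca C^+] - [\ca C^-]$ defines a locally constant section of the pullback of $R^{2g-2}\pi_\ast \ZZ$ to $T_{m,n}$, hence a constant section on each connected component. I would let $T'_{m,n} \subset T_{m,n}$ be the union of those components on which this constant section coincides with the pullback of $\gamma_\theta$, and set $Z_{m,n} \subset S$ to be the image of $T'_{m,n}$. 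By properness each $Z_{m,n}$ is a closed algebraic subvariety of $S$, and by construction a point $s$ lies in some $Z_{m,n}(\CC)$ if and only if $\gamma_{\theta_s}$ can be written as a difference $[C^+] - [C^-]$ of classes of effective one-cycles on $A_s$, i.e.\ if and only if $\gamma_{\theta_s}$ is algebraic. This yields $X = \bigcup_{m,n} Z_{m,n}(\CC)$.

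For the final assertion I would argue as follows. If $U(\CC) \subset X$ for a non-empty open $U \subset S$, then the complex manifold $U(\CC)$ is covered by the countable family of analytic closed subsets $(Z_{m,n} \cap U)(\CC)$. As $U(\CC)$ is a locally compact Hausdorff space, hence a Baire space, some $Z_{m,n} \cap U$ must have non-empty interior in $U(\CC)$, and since $S$ is irreducible (being smooth and connected), the closed subvariety $Z_{m,n}$ is then forced to equal $S$; hence $X = S(\CC)$. The main technical point to pin down will be the cycle-class bookkeeping: verifying that every algebraic realization of $\gamma_{\theta_s}$ is captured by some pair $(m,n)$, which will be handled by the countable enumeration of Hilbert polynomials together with the reduction to differences of effective cycles.
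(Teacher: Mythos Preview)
Your proposal is correct and follows essentially the same approach as the paper: reduce via Theorem \ref{maintheorem} to the algebraicity of the minimal class, then parameterize realizations by products of components of the relative Hilbert scheme and take images in $S$. The paper packages the local-constancy step you invoke (that the relative cycle class is constant on connected components of $T_{m,n}$) into a separate lemma (Lemma \ref{lemma_algebraicityminimalclass}), proved there via specialization in \'etale cohomology, and leaves the Baire-category deduction of the final clause implicit where you spell it out.
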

\begin{proof}
Write $\ca A = A(\CC)$ and $B = S(\CC)$ and let $\pi\colon \ca A \to B$ be the induced family of complex abelian varieties. Let $g \in \ZZ_{\geq 0}$ be the relative dimension of $\pi$ and define, for $t \in S(\CC)$, $\theta_t \in \NS(\ca A_t)\subset \rm H^2(\ca A_t, \ZZ)$ to be the polarization of $\ca A_t$. There is a global section $\gamma_\theta \in \rm R^{2g-2}\pi_\ast\ZZ$ such that for each $t \in B$, $\gamma_{\theta_t} = \theta_t^{g-1}/(g-1)! \in \rm H^{2g-2}(\ca A_t, \ZZ)$. 
Note that $\gamma_\theta$ is Hodge everywhere on $B$. For those $t \in B$ for which $\gamma_{\theta_t}$ is algebraic, write $\gamma_{\theta_t}$ as the difference of effective algebraic cycle classes on $\ca A_t$. This gives a countable disjoint union $\phi \colon \sqcup_{ij}H_i\times_S H_j \to S$ of products of relative Hilbert schemes $H_i \to S$. By Lemma \ref{lemma_algebraicityminimalclass} below, $\gamma_{\theta_t}$ is algebraic precisely for closed points $t$ in the image $Y \subset S$ of $\phi$. Theorem \ref{maintheorem} implies that $X = Y$ and the assertion is proven. 
\end{proof}

\begin{lemma} \label{lemma_algebraicityminimalclass}
Let $S$ be an integral variety over $\CC$, let $\ca A \to S$ be a principally polarized abelian scheme of relative dimension $g$ over $S$ and let $\ca C_i \subset \ca A$ for $i= 1,\dotsc, k$ be relative curves in $\ca A$ over $S$. Let $n_1,\dotsc, n_k$ be integers and let $y \in S(\CC)$ be a point that satisfies $\sum_{i = 1}^k n_i\cdot cl(C_{i,y}) = \gamma_{\theta_y} \in \rm H^{2g-2}(A_y,\ZZ)$. Then for every $x \in S(\CC)$, one has $\sum_{i = 1}^k n_i\cdot cl(C_{i,x}) = \gamma_{\theta_x} \in \rm H^{2g-2}(A_x,\ZZ)$. 
\end{lemma}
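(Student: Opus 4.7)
The plan is to work analytically and interpret both sides of the equation as global sections of a local system on $S(\CC)$. Let $\pi\colon \ca A^{\an} \to S^{\an}$ be the associated analytic family, so that $\ca L \coloneqq R^{2g-2}\pi_\ast \ZZ$ is a local system of finitely generated abelian groups on $S^{\an}$ (smooth proper morphisms of complex analytic spaces being locally $C^\infty$-trivial by Ehresmann). The polarization gives a global section $\theta \in \Gamma(S^{\an}, R^2\pi_\ast \ZZ)$, and the expression $\gamma_{\theta_t} = \theta_t^{g-1}/(g-1)!$ yields a well-defined global section $\gamma \in \Gamma(S^{\an}, \ca L)$, as was already observed in the proof of Corollary \ref{complexspecialization}.

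Next, for each relative curve $\ca C_i \subset \ca A$ over $S$ (which we may assume to be flat over $S$; in the application to Corollary \ref{complexspecialization} the $\ca C_i$ come from relative Hilbert schemes and are flat by definition), I would argue that the fiberwise cycle classes $cl(\ca C_{i,t}) \in \rm H^{2g-2}(\ca A_t,\ZZ)$ assemble into a global section $\sigma_i \in \Gamma(S^{\an}, \ca L)$. One way to see this: restricting $\pi$ to a small disc $U \subset S^{\an}$ and applying Ehresmann's fibration theorem, one obtains a $C^\infty$-trivialization $\ca A^{\an}|_U \cong U \times \ca A_t$ under which $\ca C_i^{\an}|_U$ is a family of topological cycles whose homology class is locally constant in $t$, so that $cl(\ca C_{i,t})$ corresponds to a single element under the canonical identification of $\ca L|_U$ with the constant sheaf $\rm H^{2g-2}(\ca A_t, \ZZ)_U$. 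Hence $\sum_i n_i\,\sigma_i - \gamma \in \Gamma(S^{\an}, \ca L)$ is a well-defined global section of the local system $\ca L$.

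To finish, it suffices to recall that a global section of a local system on a connected topological space is parallel, and therefore determined by its value at any single point. Since $S$ is integral over $\CC$, the analytic space $S^{\an}$ is connected (irreducible complex varieties have connected analytifications). By hypothesis the section $\sum_i n_i\,\sigma_i - \gamma$ vanishes at $y$, hence it vanishes at every $x \in S(\CC)$, which is precisely the claim.

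The only real subtlety is the assertion that $cl(\ca C_{i,t})$ varies as a flat section of $\ca L$; I expect this to be the main technical point. Once the flatness of $\ca C_i \to S$ is granted (which holds in all the intended applications via Hilbert schemes), the argument is standard, and can alternatively be phrased using the compatibility of the cycle class map with specialization in $\ell$-adic cohomology together with the Artin comparison, or using a relative cycle class map into $R^{2g-2}\pi_\ast\ZZ$ constructed via Borel--Moore homology of the total space.
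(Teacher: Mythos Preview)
Your argument is correct. The key point---that both $\gamma_{\theta_t}$ and the fibrewise cycle classes $cl(\ca C_{i,t})$ assemble into global sections of the local system $R^{2g-2}\pi_\ast\ZZ$, so that their difference is determined by its value at a single point of the connected space $S^{\an}$---is sound, and your discussion of the flatness hypothesis is appropriate.

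The paper, however, proceeds differently. Rather than working analytically, it reduces to the case where $S$ is an open subset of a smooth connected curve, passes to $\ell$-adic \'etale cohomology, and uses the specialization isomorphism $\rm H^{2g-2}_{\etale}(A_{\bar\eta},\ZZ_\ell) \xrightarrow{\sim} \rm H^{2g-2}_{\etale}(A_y,\ZZ_\ell)$ (together with its compatibility with cycle classes, via \cite[Example 20.3.5]{fultonintersection}) to lift the equality at $y$ to the geometric generic fibre, and then specializes down to $x$. Your approach is more direct and arguably more natural over $\CC$; the paper's argument, by contrast, is written in a language that transports without change to positive characteristic, which matters because the same specialization mechanism is reused later (e.g.\ in the proof of Corollary~\ref{cor:specialization}). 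Both arguments ultimately encode the same principle---local constancy of cycle classes in a smooth proper family---but through different cohomology theories.
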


\begin{proof}
Since it suffices to prove the lemma for any open affine $U \subset S$ that contains $y$, we may assume that $S$ is quasi-projective. Fix $x \in S(\CC)$. After replacing $S$ by a suitable base change containing $x$ and $y$, we may assume that $S$ is an open subscheme of a smooth connected curve. 
For $t \in S$, denote by $\theta_{\bar t} \in \rm H^{2}_{\etale}(A_{\bar t}, \ZZ_\ell)$ the class of the polarization and $\gamma_{\theta_{\bar t}} = \theta_{\bar t}^{g-1}/(g-1)!$. 
Let $\eta = \Spec K$ be the generic point of $S$. The elements $\sum_i n_i \cdot cl(C_{i,\bar \eta})$ and $\gamma_{\theta_{\bar \eta}}$ in $\rm H^{2g-2}_{\etale}(A_{\bar \eta}, \ZZ_\ell)$ both map to $\sum_i n_i \cdot cl(C_{i,y}) = \gamma_{\theta_{y}} \in  \rm H^{2g-2}_{\etale}(A_{y}, \ZZ_\ell)$ under the specialization homomorphism $s\colon \rm H^{2g-2}_{\etale}(A_{\bar \eta}, \ZZ_\ell) \to \rm H^{2g-2}_{\etale}(A_{y}, \ZZ_\ell)$ 
by \cite[Example 20.3.5]{fultonintersection}. Since $s$ is an isomorphism, we have $\sum_i n_i \cdot cl(C_{i,\bar \eta}) = \gamma_{\theta_{\bar \eta}}$, which implies that $\sum_{i}n_i\cdot cl(C_{x,i}) = \gamma_{\theta_x} \in \rm H_{\etale}^{2g-2}(A_x,\ZZ_\ell)$.
\end{proof}

\subsection{Density of abelian varieties satisfying IHC$_1$} \label{subsec:density}

The goal of this section is to prove that Conditions $\ref{introitem:minimalpoincare}-\ref{introitem:integralhodgeforproduct}$ in Theorem \ref{maintheorem} are satisfied on a dense subset of the moduli space of complex abelian varieties. To do so, will we state yet another criterion that a complex abelian variety may satisfy. In some sense this criterion provides a bridge between abelian varieties outside the Torelli locus and those lying within, thereby implying the integral Hodge conjecture for one-cycles for the abelian variety under consideration. 

\begin{definition} \label{definition:primeisogenies}
Let $A$ and $B$ be a complex abelian varieties and let $p$ a prime number. We say that $A$ is \textit{prime-to-$p$ isogenous to a $B$} if there exists an isogeny $\alpha \colon A \to B$ whose degree $\deg(\alpha)$ is not divisible by $p$. We say that $A$ is \textit{$p$-power isogenous to $B$} if $A$ is isogenous to $B$ for some isogeny $\alpha$ whose degree is a power of $p$. 
\end{definition}

The following proposition shows in particular that to prove the density part of the statement in Theorem~\ref{introth:density}, it suffices to prove that for any prime number $\ell$, those abelian varieties that are $\ell$-power isogenous to a product of elliptic curves are dense in their moduli space.

\begin{proposition} \label{proposition:IHCONELEMMA}
Let $A$ be a complex abelian variety of dimension $g$. Let $\wh A$ be the dual abelian variety and let $\ca P_A$ be the Poincar\'e bundle. Let $\kappa$ be a non-zero integer such that the cohomology class $\kappa \cdot c_1(\ca P_A)/(2g-1)! \in \rm H^{4g-2}(A \times \wh A, \ZZ)$ is algebraic. Consider the following statements:
\begin{enumerate}[wide, labelwidth=!, labelindent=0pt]
    \item \label{itemlemma:one} The abelian variety $A$ satisfies the integral Hodge conjecture for one-cycles.
    \item \label{itemlemma:two} For every prime number $p$, there exists an abelian variety $B$ such that the abelian variety $A \times B$ is prime-to-$p$ isogenous to the Jacobian of a smooth projective curve. 
    \item \label{itemlemma:twosmall} For every prime number $p$ that divides $\kappa$, there exists an abelian variety $B$ such that the abelian variety $A \times B$ is prime-to-$p$ isogenous to a Jacobian of a smooth projective curve. 
    \item \label{itemlemma:three} For every prime number $p$, there exists an abelian variety $B$ such that the abelian variety $A \times B$ is prime-to-$p$ isogenous to a product of Jacobians of smooth projective curves.
     \item \label{itemlemma:threesmall} For every prime number $p$ dividing $\kappa$, there exists an abelian variety $B$ such that the abelian variety $A \times B$ is prime-to-$p$ isogenous to a product of Jacobians of smooth projective curves. 
\end{enumerate}
Then $[\ref{itemlemma:two} \implies \ref{itemlemma:twosmall} \implies \ref{itemlemma:threesmall} \implies \ref{itemlemma:one}]$ and $[\ref{itemlemma:two} \implies \ref{itemlemma:three} \implies \ref{itemlemma:threesmall}]$. Moreover, if $A$ is principally polarized by $\theta_A \in \NS(A)$, then \ref{itemlemma:one}  is implied by 
\begin{enumerate}[wide, labelwidth=!, labelindent=0pt]
\setcounter{enumi}{5}
    \item \label{itemlemma:zero} For any prime number $p | (g-1)!$ there exists a smooth projective curve $C$ and a morphism of abelian varieties $\phi\colon A \to J(C)$ such that $\phi^\ast \theta_{J(C)} = m\cdot \theta_A$ for $m \in \ZZ_{\geq 1}$ with $\gcd(m,p) = 1$. 
\end{enumerate}
Finally, if $A$ is principally polarized of Picard rank one, then the statements $\ref{itemlemma:one} - \ref{itemlemma:zero} $ are equivalent.  
\end{proposition}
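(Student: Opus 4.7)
The implications $(\ref{itemlemma:two}) \Rightarrow (\ref{itemlemma:twosmall}) \Rightarrow (\ref{itemlemma:threesmall})$ and $(\ref{itemlemma:two}) \Rightarrow (\ref{itemlemma:three}) \Rightarrow (\ref{itemlemma:threesmall})$ are tautological, since each step either restricts the set of primes under consideration or enlarges the class of targets from a single Jacobian to a product of Jacobians. The substantive content lies in $(\ref{itemlemma:threesmall}) \Rightarrow (\ref{itemlemma:one})$, the implication $(\ref{itemlemma:zero}) \Rightarrow (\ref{itemlemma:one})$ in the ppav case, and the converses for Picard rank one.

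For $(\ref{itemlemma:threesmall}) \Rightarrow (\ref{itemlemma:one})$, by Theorem \ref{maintheorem} it suffices to prove $\rho_A \coloneqq c_1(\ca P_A)^{2g-1}/(2g-1)! \in \rm H^{4g-2}(A \times \wh A, \ZZ)$ is algebraic. The hypothesis gives $\kappa \rho_A$ algebraic, so the plan is to produce, for each prime $p \mid \kappa$, an integer $d_p$ coprime to $p$ with $d_p \rho_A$ algebraic; then $\gcd(\kappa, d_{p_1}, \dots, d_{p_r}) = 1$ as $p_i$ ranges over the prime divisors of $\kappa$ (any common prime divisor would be some $p_i$, contradicting $\gcd(d_{p_i}, p_i) = 1$) forces $\rho_A$ algebraic by B\'{e}zout. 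To produce $d_p$, fix $p \mid \kappa$ and let $\alpha \colon A \times B \to J = \prod_i J(C_i)$ be the prime-to-$p$ isogeny of degree $d_p$ supplied by $(\ref{itemlemma:threesmall})$. Theorem \ref{introth:IHCforjacobians} together with Corollary \ref{cor:IHCforproducts} shows $\rho_J$ algebraic. The universal property of the Poincar\'e bundle gives $(\alpha \times \id)^\ast c_1(\ca P_J) = (\id \times \wh\alpha)^\ast c_1(\ca P_{A \times B})$ on $(A \times B) \times \wh J$, hence the same equality for $\rho_J$ and $\rho_{A \times B}$ integrally; the left-hand pullback is algebraic, so the right-hand pullback is, and applying $(\id \times \wh\alpha)_\ast$ through the degree-$d_p$ isogeny $\id \times \wh\alpha$ yields $d_p \rho_{A \times B}$ algebraic. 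Equation (\ref{eq1}) applies verbatim to $X = A \times B$ (whose Poincar\'e bundle factors as $\pi_{13}^\ast \ca P_A \otimes \pi_{24}^\ast \ca P_B$), decomposing $\rho_{A \times B} = \pi_{13}^\ast \rho_A \cdot \pi_{24}^\ast [0]_{B \times \wh B} + \pi_{13}^\ast [0]_{A \times \wh A} \cdot \pi_{24}^\ast \rho_B$; pushforward via $\pi_{13} \colon A \times B \times \wh A \times \wh B \to A \times \wh A$ annihilates the $\rho_B$-summand by dimension (it is not top on $B \times \wh B$) and sends the other summand to $\rho_A$, giving $d_p \rho_A$ algebraic as required.

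For $(\ref{itemlemma:zero}) \Rightarrow (\ref{itemlemma:one})$ in the ppav case, Lemma \ref{lemma:minimalclasspoincarecomparison} combined with the triviality $(g-1)! \gamma_{\theta_A} = \theta_A^{g-1}$ shows $(g-1)! \rho_A$ is always algebraic, so replacing $\kappa$ by $\gcd(\kappa, (g-1)!)$ we may assume $\kappa \mid (g-1)!$. For each $p \mid \kappa$, the morphism $\phi \colon A \to J(C)$ with $\phi^\ast \theta_{J(C)} = m \theta_A$ is finite by ampleness; when $\dim J(C) = g$ it is an isogeny of degree $m^g$ coprime to $p$, realizing $(\ref{itemlemma:threesmall})$ for this $p$ with $B$ trivial, and when $\dim J(C) > g$ Poincar\'e complete reducibility furnishes a complementary abelian subvariety $B \subset J(C)$ making $A \times B \to J(C)$ an isogeny whose degree divides $m^{2g}$, still coprime to $p$, again realizing $(\ref{itemlemma:threesmall})$. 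The previous paragraph then delivers $\rho_A$ algebraic, and Theorem \ref{maintheorem} yields IHC$_1$ for $A$.

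For the Picard rank one ppav case, $\Hdg^{2g-2}(A, \ZZ) = \ZZ \cdot \gamma_{\theta_A}$ (via Poincar\'e duality and $\NS(\wh A) = \ZZ \cdot \theta$), so IHC$_1$ yields a presentation $\gamma_{\theta_A} = \sum n_i [C_i]$ with $[C_i] = a_i \gamma_{\theta_A}$, $a_i \in \ZZ_{\geq 1}$, and $\sum n_i a_i = 1$. For any prime $p$, the relation $\sum n_i a_i = 1$ forces some $a_i$ coprime to $p$; the dual of the Abel-Jacobi map of the corresponding $C_i$ composed with $\lambda$ produces $\phi_i \colon A \to J(C_i)$, and a Matsusaka-type analysis—combining Lemma \ref{beauvillelemma} (whence $\ca F_A(\gamma_{\theta_A}) = \pm \theta_A$) with the identification of $\phi_i^\ast \theta_{J(C_i)}$ with $\ca F_A([C_i])$—yields $\phi_i^\ast \theta_{J(C_i)} = a_i \theta_A$, establishing $(\ref{itemlemma:one}) \Rightarrow (\ref{itemlemma:zero})$; the Poincar\'e reducibility step of the preceding paragraph then closes $(\ref{itemlemma:zero}) \Rightarrow (\ref{itemlemma:two})$, completing the equivalence loop. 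The principal obstacle is this Matsusaka-style identification with the precise constant $a_i$ for curves of non-minimal cohomology class, which rests on classical degree bookkeeping for Abel-Jacobi maps and polarizations restricted to abelian subvarieties of Jacobians; once secured, the divisibility arithmetic and Theorem \ref{maintheorem} complete the proof.
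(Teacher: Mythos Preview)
Your argument is correct and follows the paper's overall architecture; the differences are local and worth recording.

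For $(\ref{itemlemma:threesmall}) \Rightarrow (\ref{itemlemma:one})$, your single pullback--pushforward through $(\alpha \times \id)$ and $(\id \times \wh\alpha)$ is actually cleaner than the paper's route: the paper instead factors $[m_p]_{X \times \wh X}$ through $Y \times \wh Y$ and uses that $[m_p^{2h}]^\ast$ acts by $m_p^{N_p}$ on $\rm H^{4h-2}$, arriving at $m_p^{N_p}\cdot\rho_A$ algebraic. Both produce an integer prime to $p$ annihilating the obstruction, and the B\'ezout step is identical. Your K\"unneth decomposition drops an immaterial sign: $c_1(\ca P_B)^{2g_B}/(2g_B)! = (-1)^{g_B}[0]_{B\times\wh B}$, so the pushforward is $(-1)^{g_B}\rho_A$ rather than $\rho_A$.

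For $(\ref{itemlemma:zero}) \Rightarrow (\ref{itemlemma:one})$, the paper is more direct than your detour through $(\ref{itemlemma:threesmall})$: it simply pulls back the algebraic class $\theta_{J(C)}^{g-1}/(g-1)!$ along $\phi$ to get $m^{g-1}\cdot\gamma_{\theta_A}$ algebraic, then runs B\'ezout against $(g-1)!$. No complementary $B$ is needed at this stage. Your route works too, but the degree bound you invoke (that $\deg(A\times B\to J(C))$ divides $m^{2g}$) deserves the computation the paper carries out: writing $\phi=j\circ\pi$ with $\pi\colon A\twoheadrightarrow A_0$ and $\lambda_0=j^\ast\lambda_{J(C)}$, one has $(\deg\pi)^2\cdot\deg\lambda_0=m^{2g}$ and $A_0\cap B=\ker\lambda_0$, whence the total degree is $\deg\pi\cdot\deg\lambda_0$, each factor dividing a power of $m$.

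For the Picard rank one converse, the identity $\phi_i^\ast\theta_{J(C_i)} = f_i\theta_A$ that you flag as ``the principal obstacle'' is precisely what the paper secures by citing Welters' criterion \cite[Lemma~12.2.3]{birkenhake}; you should also make explicit that the (possibly singular) curves $Z_i$ must be normalized before forming the Abel--Jacobi map, and that $\rho(A)=1$ forces $A$ simple, so the induced $\phi_i$ is finite.
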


\begin{proof}
\textbf{Step one}: $[\ref{itemlemma:two} \implies \ref{itemlemma:twosmall} \implies \ref{itemlemma:threesmall}]$ \textit{and} $[\ref{itemlemma:two} \implies \ref{itemlemma:three} \implies \ref{itemlemma:threesmall}]$. These implications are trivial. 
\\
\\
\textbf{Step two}: $[\ref{itemlemma:threesmall} \implies \ref{itemlemma:one}]$. 
Let $g$ be the dimension of $A$. We want to prove that the class $c_1(\ca P_A)^{2g-1}/(2g-1)! \in \rm H^{4g-2}(A \times \wh A, \ZZ)$ is algebraic. Let $p$ be any prime number that divides $\kappa$. Then by Condition \ref{itemlemma:threesmall}, there exists an abelian variety $B$ and an isogeny $\alpha\colon A \times B \to Y$ to the product $Y = \prod_iJ(C_i)$ of Jacobians $J(C_i)$ of smooth projective curves $C_i$ such that $\gcd(\deg(\alpha), p) = 1$. Define $X = A \times B$. Let $g_B$ be the dimension of $B$, let $h = g + g_B = \dim(X) = \dim(Y)$, and let $m_p = \deg(\alpha)$. There exists an isogeny $\beta \colon Y \to X$ such that 
$\beta \circ \alpha = [m_p]_X$. If we define $n_p = \deg(\beta)$ then $m_p \cdot n_p = \deg(\alpha) \cdot \deg(\beta) = \deg(\alpha \circ \beta) = m_p^{2h}$. 
Therefore, $(\beta \circ \alpha) \times (\wh \alpha \circ \wh \beta) = [m_p]_{X \times \wh X}$. 
Consequently, if $N_p = 2h \cdot (4h-2)$, then the homomorphism $$[m_p^{2h}]^{\ast} = (m_p^{N_p} \cdot (-) ) \colon \rm H^{4h-2}(X \times \wh X, \ZZ) \to \rm H^{4h-2}(X \times \wh X, \ZZ)$$ 
factors through $\rm H^{4h-2}(Y \times \wh{Y}, \ZZ)$. Since $Y \times \wh{Y}$ satisfies the integral Hodge conjecture by Theorem \ref{introth:IHCforjacobians}, the Hodge class $m_p^{N_p} \cdot c_1(P_X)^{2h-1}/(2h-1)! \in \rm H^{4h-2}(X \times \wh X , \ZZ)$ is algebraic. 
Let $f\colon A \times B \times \wh A \times \wh B \to A \times \wh A$ and $g \colon A \times B \times \wh A \times \wh B \to B \times \wh B$ be the canonical projections. Then $\ca P_X \cong f^\ast \ca P_A \otimes g^\ast \ca P_B$. Using this and denoting $\mu = c_1(\ca P_A)$ and $\nu = c_1(\ca P_B)$ we have
\[
    \frac{c_1(\ca P_{X})^{2h-1}}{(2h-1)!} =f^{\ast}\left(\frac{\mu^{2g-1}}{(2g-1)!}\right) \cdot g^\ast \left( \frac{\nu^{2g_B}}{(2g_B)!} \right) + f^\ast \left( \frac{\mu^{2g}}{(2g)!} \right) \cdot  g^\ast\left(\frac{\nu^{2g_B-1}}{(2g_B-1)!}    \right).\] 
    This implies that 
$f_\ast \left(c_1(\ca P_X)^{2h-1}/(2h-1)! \right) = (-1)^{g_b}\mu^{2g-1}/(2g-1)!$.
In particular, the class $m_p^{N_p} \cdot c_1(\ca P_A)^{2g-1}/(2g-1)! \in \rm H^{4g-2}(A \times \wh A, \ZZ)$ is algebraic. 

Let $p_1, \dotsc, p_n$ be all prime divisors of $\kappa$ and observe that $\gcd(\kappa, m_{p_1}^{N_{p_1}},m_{p_2}^{N_{p_2}}, \dotsc, m_{p_n}^{N_{p_n}}) = 1$. 
Therefore, there are integers $a, b_1, \dotsc, b_n$ such that $a \cdot \kappa + \sum_{i = 1}^n b_i \cdot m_{p_i}^{N_{p_i}} = 1$. One obtains 
\[
\frac{c_1(\ca P_A)^{2g-1}}{(2g-1)!} =
a \cdot \kappa \cdot \frac{c_1(\ca P_A)^{2g-1}}{(2g-1)!}  + \sum_{i = 1}^n b_i \cdot m_{p_i}^{N_{p_i}} \cdot \frac{c_1(\ca P_A)^{2g-1}}{(2g-1)!} \in \rm H^{4g-2}(A \times \wh A, \ZZ).
\]
This proves that $c_1(\ca P_A)^{2g-1}/(2g-1)!$ is a $\ZZ$-linear combination of algebraic classes, hence algebraic. Condition \ref{itemlemma:one} follows then from Theorem \ref{maintheorem}.
\\
\\
\textbf{Step three}: [$\ref{itemlemma:zero} \implies \ref{itemlemma:one}]$ \textit{for $A$ principally polarized by $\theta_A \in \NS(A)$}. Let $p_1,\dotsc, p_k$ be the prime factors of $(g-1)!$ and let $C_1,\dotsc, C_k$ be smooth proper curves for which there exist homomorphisms $\phi_i \colon A \to J(C_i)$ such that $\phi^\ast \theta_{J(C_i)} = m_i \cdot \theta_A$ for some $m_i \in \ZZ_{\geq 1}$ with $p_i \nmid m$. Since $\theta_{J(C_i)}^{g-1}/(g-1)! \in \rm H^{2g-2}(J(C_i),\ZZ)$ is algebraic for each $i$, the classes $\phi_i^\ast(\theta_{J(C_i)}^{g-1}/(g-1)!) = m_i^{g-1} \cdot \theta_{A}^{g-1}/(g-1)! \in \rm H^{2g-2}(A,\ZZ)$ are algebraic. Since $\gcd((g-1)!, m_1,\dotsc, m_k) = 1$, this implies that $\theta_A^{g-1}/(g-1)!$ is algebraic. Condition \ref{itemlemma:one} follows then from Theorem \ref{maintheorem}. 
\\
\\
\textbf{Step four}: [$\ref{itemlemma:zero} \impliedby \ref{itemlemma:one} \implies \ref{itemlemma:two}]$ \textit{for $(A,\theta_A)$ principally polarized with $\rho(A) =1$}. 
Write $\theta = \theta_A$. Let $Z_1, \dotsc, Z_n$ be integral curves $Z_i \subset A$ and let $e_1, \dotsc, e_n \in \ZZ$ with $e_i \neq 0$ for all $i$ be such that 
$
{\theta^{g-1}}/{(g - 1)!} = \sum_{i = 1}^n e_i\cdot [Z_i] \in \rm H^{2g - 2}(A, \ZZ)$. Since $\rho(A) = 1$, the group $\Hdg^{2g-2}(A, \ZZ)$ is generated by $\theta^{g-1}/(g-1)!$. Consequently, we have $[Z_i] = f_i\cdot \left(\theta^{g-1}/(g-1)!\right)$ for some non-zero $f_i \in \ZZ$. Hence we can write 
$$
{\theta^{g-1}}/{(g - 1)!} = \sum_{i = 1}^n e_i\cdot [Z_i]  = \sum_{i = 1}^n e_i\cdot f_i \cdot \theta^{g-1}/(g - 1)!$$ which implies that $\sum_{i = 1}^n e_i\cdot f_i  = 1$. Now let $p$ be any prime number. Then there exists an integer $i$ with $1 \leq i \leq n$ such that $p$ does not divide $f_i$. Let $C_i \to Z_i$ be the normalization of $Z_i$ and let $\lambda_A = \varphi_{\theta} \colon A \to \wh A$ be the polarization corresponding to $\theta$. This gives a diagram
\begin{equation}
\begin{split}
\label{diagram:embedintojacobinian}
\xymatrixcolsep{4pc}
\xymatrix{
C_i \ar[rr]^{\varphi} \ar[dr]^{\iota}&& A \ar@/_3pc/[rrr]^{\phi} \ar[r]_{\sim}^{\lambda_A} & \wh A \ar[r]^{\varphi^{\ast}} & \Pic^0(C_i) \ar[r]^{a}_{\sim} & J(C_i), \\
& J(C_i) \ar[ur]^{\psi} & & &  &
}
\end{split}
\end{equation}
where $\iota\colon C_i \to J(C_i) = H^0(C, \Omega_C)^\ast/H_1(C,\ZZ)$ is the Abel--Jacobi map (for some $p \in C$), and $\varphi^\ast\colon \wh A = \Pic^0(A) \to \Pic^0(C_i)$ is the pullback of line bundles along $\varphi\colon C_i \to A$. The natural homomorphism $a\colon \Pic^0(C_i) \to J(C_i)$ is an isomorphism by the Abel--Jacobi theorem. Since the triangle on the left in Diagram (\ref{diagram:embedintojacobinian}) commutes and $[Z_i] \in \rm H^{2g-2}(A, \ZZ)$ is non-zero, the morphism $\psi \colon J(C_i) \to A$ is non-zero. As $\rho(A) = 1$, the map $\psi\colon J(C_i) \to A$ must be surjective, the Picard rank of a non-simple abelian variety being greater than one. Dually, $\psi$ gives rise to a non-zero homomorphism $\wh \psi\colon \wh A \to \wh{J(C_i)}$, and the simpleness of $\wh A$ implies that $\wh \psi$ is finite onto its image. We claim that the same is true for $\phi$. To prove this, it suffices to show that the kernel of $\varphi^\ast \colon \wh A \to \Pic^0(C_i)$ is finite. Since the homomorphism $\iota^\ast\colon \wh{J(C_i)} \to \Pic^0(C_i)$ induced by the embedding $\iota\colon C_i \to J(C_i)$ is an isomorphism, dualizing the triangle on the left in Diagram (\ref{diagram:embedintojacobinian}) proves our claim. 
By construction, we have 
$\varphi_{\ast}[C_i] = [Z_i] = f_i \cdot \theta^{g-1}/(g-1)! \in \rm H^{2g-2}(A, \ZZ)$. By a version of Welters' Criterion (see \cite[Lemma 12.2.3]{birkenhake}), this implies that $\phi^{\ast}\left(\theta_{J(C_i)}\right) = f_i \cdot \theta \in \rm H^2(A, \ZZ)$, where $\theta_{J(C_i)} \in \rm H^2(J(C_i), \ZZ)$ is the canonical principal polarization. In particular, \ref{itemlemma:zero} holds. 

We claim that also \ref{itemlemma:two} holds. Let $j\colon A_0 \hookrightarrow J(C_i)$ be the embedding of $A_0 = \phi(A)$ into $J(C_i)$ and let $\lambda_0 \colon A_0 \to \wh A_0$ be the polarization on $A_0$ induced by $j$. We have $\phi^{\ast}(\lambda) = \varphi_{f_i\cdot \theta} = f_i \cdot \varphi_\theta = f_i \cdot \lambda_A$. We obtain a commutative diagram
\[
\xymatrixcolsep{5pc}
\xymatrix{
&A\ar[dl]_{[f_i]_A} \ar[r]^{\pi}\ar[d]^{f_i\cdot \lambda_A} &A_0\ar[d]^{\lambda_0} \ar[r]^j & J(C_i) \ar[d]^{\lambda} \\
A & \wh A \ar[l]^{\lambda_{\wh A}} & \wh A_0 \ar[l]^{\wh \pi} & \widehat{J(C_i)}. \ar[l]\ar[l]^{\wh j} 
}
\]
Let $G$ be the kernel of $\pi$. Define $K = \Ker([f_i]_A) = \Ker(f_i\cdot \lambda_A) \cong (\ZZ/f_i)^{2g} \subset A$, and $U = \Ker(\wh \pi \circ \lambda_0) \subset A_0$. Also define $H = \Ker(\lambda_0)$, and observe that $H \subset U$. The exact sequence $0 \to G \to K \to U \to 0$ shows that if $a, k, u$ and $h$ are the respective orders of $G$, $K$, $U$ and $H$, then one has
\begin{equation} \label{eq:dividingorders}
h | u | k | f_i \quad \textnormal{ and } \quad a | k | f_i.
\end{equation}
Then define $B = \Ker( \wh j \circ \lambda ) \subset J(C_i)$ with inclusion $i\colon B \hookrightarrow J(C_i)$. It is easy to see that $B$ is connected. 
Moreover, we have $A_0 \cap B = H$
and, therefore, an exact sequence of commutative group schemes $$0 \to H \to A_0 \times B \xrightarrow{\psi} J(C_i) \to 0.$$ 
The morphism $\alpha\colon A \times B \to J(C_i)$, defined as the composition
$
\xymatrixcolsep{2pc}
\xymatrix{
A \times B \ar[r]^{\pi \times \id}  & A_0 \times B \ar[r]^{\psi} & J(C_i),}$ is an isogeny. Since the degree of an isogeny is multiplicative in compositions, we have $\deg(\alpha) = \deg\left(\psi \circ (\pi \times \id) \right) =  \deg(\psi) \cdot \deg(\pi \times \id)  = h \cdot \deg(\pi) = h \cdot a$. In particular, $p$ does not divide $\deg(\alpha)$ because $h$ and $a$ divide $f_i$ by Equation (\ref{eq:dividingorders}).
\end{proof}

\begin{proof}[Proof of Theorem \ref{introth:density}]
According to Theorem \ref{maintheorem}, it suffices to show that the cohomology class $c_1(\ca P_A)^{2g-1}/(2g-1)! \in \rm H^{4g-2}(A \times \wh A, \ZZ)$ is algebraic for $[(A,\lambda)]$ in a dense subset $X$ of $\msf A_{g,\delta}(\CC)$ as in the statement. Define $D = \textnormal{diag}(\delta_1, \dotsc, \delta_g)$ and define, for each subring $R$ of $\CC$, a group
\[
\textnormal{Sp}_{2g}^\delta(R) = \left\{M \in \GL_{2g}(R) \mid  M 
\begin{pmatrix} 
0 & D \\
-D & 0
\end{pmatrix} M^t = 
\begin{pmatrix} 
0 & D \\
-D & 0
\end{pmatrix}
\right\}. 
\]
The isomorphism
\[
\textnormal{Sp}_{2g}^\delta(\RR)  \to \textnormal{Sp}_{2g}(\RR), \quad M \mapsto 
\begin{pmatrix} 
1_g & 0 \\
0 & D
\end{pmatrix}^{-1} M \begin{pmatrix} 
1_g & 0 \\
0 & D
\end{pmatrix}
\]
induces an action of $\textnormal{Sp}_{2g}^\delta(\ZZ)$ on the genus $g$ Siegel space $\bb H_g$, and the period map defines an isomorphism of complex analytic spaces $\msf A_{g,\delta}(\CC) \cong \textnormal{Sp}_{2g}^\delta(\ZZ) \setminus \bb H_g$ \cite[Theorem 8.2.6]{birkenhake}. Pick any prime number $\ell > (2g-1)!$ and consider, for a period matrix $x \in \bb H_g$, the orbit $\textnormal{Sp}_{2g}^\delta(\ZZ[1/\ell]) \cdot x \subset \bb H_g$. Let $(A, \lambda)$ be a polarized abelian variety admitting a period matrix equal to $x$. The image of $\textnormal{Sp}_{2g}^\delta(\ZZ[1/\ell]) \cdot x$ in $\msf A_{g,\delta}(\CC)$ is the \textit{Hecke-$\ell$-orbit} of $[(A, \lambda)] \in \msf A_{g,\delta}(\CC)$, i.e.\ the set of isomorphism classes of polarized abelian varieties $[(B, \mu)] \in \msf A_{g,\delta}(\CC)$ for which there exists integers $n,m\in \ZZ_{\geq 0}$ and an isomorphism of polarized rational Hodge structures $\phi\colon \rm H_1(B, \QQ) \xrightarrow{\sim} \rm H_1(A, \QQ)$ 
such that $\ell^n \cdot \phi$ and $\ell^m \cdot \phi^{-1}$ are morphisms of integral Hodge structures (Hecke orbits were studied in positive characteristic in e.g.\ \cite{chaiordinaryhecke, oorthecke}). The degree of the isogeny $\alpha = \ell^n\phi$ must be $\ell^k$ for some nonnegative integer $k$. 
In particular, if one abelian variety in a Hecke-$\ell$-orbit happens to be isomorphic to a Jacobian, then every abelian variety in that orbit is $\ell$-power isogenous to a Jacobian, see Definition \ref{definition:primeisogenies}. 

The decomposition of a polarized abelian variety into non-decomposable polarized abelian subvarieties is unique \cite[Corollaire 2]{debarreproduits}, which implies that the following morphism
\[
\pi \colon \prod_{i = 1}^g \msf A_{1,1} \to \msf A_{g,\delta}, \quad
\left([(E_1, \lambda_1)], \dotsc, [(E_g, \lambda_g)] \right) \mapsto ([E_1 \times \cdots \times E_g, \delta_1\cdot \lambda_1 \times \cdots \times \delta_g\cdot \lambda_g)]
\]
is finite onto its image. Thus $\msf A_{g, \delta}$ contains a $g$-dimensional subvariety on which the integral Hodge conjecture for one-cycles holds. We claim that $\Sp_{2g}^\delta(\ZZ[1/\ell])$ is dense in $\Sp_{2g}(\RR)$. Since $\Sp_{2g}^{\delta}(\QQ)$ arises as the group of rational points of an algebraic subgroup $\Sp_{2g}^\delta$ of $\GL_{2g}$ over $\mathbb Q$ \cite[Chapter 2, \S 2.3.2]{PlatonovRapinchuk}, which is isomorphic to $\Sp_{2g}$ over $\QQ$, 
this claim follows from the well-known fact that for $S = \{\ell\} \subset \Spec \ZZ$, the algebraic group $\Sp_{2g}$ satisfies the strong approximation property with respect to $S$ \cite[Chapter 7, \S 7.1]{PlatonovRapinchuk} (indeed, this is classical and follows from the non-compactness of $\Sp_{2g}(\QQ_\ell)$, see \cite[Theorem 7.12]{PlatonovRapinchuk}). 

Let $V = \pi\left( \prod_{i = 1}^g \msf A_{1,1}\right) \subset \msf A_{g,\delta}$. Then $X' \coloneqq \Sp_{2g}^\delta(\ZZ[1/\ell]) \cdot V =  \cup_iZ_i \subset \msf A_{g, \delta}(\CC)$ is a countable union of closed analytic subsets $Z_i \subset \msf A_{g, \delta}(\CC)$ of dimension $\dim Z_i \geq g$ such that $X' \subset \msf A_{g, \delta}(\CC)$ is dense in the analytic topology and $c_1(\ca P_A)^{2g-1}/(2g-1)! \in \rm H^{4g-2}( A \times \wh A, \ZZ)$ is algebraic for every polarized abelian variety $(A, \lambda)$ of polarization type $\delta$ whose isomorphism class lies in $X'$. To prove the theorem, we are reduced to proving that there exists a similar countable union $X \subset \msf A_{g, \delta}(\CC)$ whose components are algebraic. For this, it suffices to prove the following \textit{claim:} the locus of $[(A, \lambda)] \in \msf A_{g, \delta}(\CC)$ such that $c_1(\ca P_A)^{2g-1}/(2g-1)! \in \rm H^{4g-2}( A \times \wh A, \ZZ)_{\textnormal{alg}}$ is a countable union $W = \cup_jY_j \subset \msf A_{g, \delta}(\CC)$ of closed algebraic subsets $Y_j \subset \msf A_{g, \delta}(\CC)$. Indeed, if this holds, then $X' \subset W$ and 
since each $Z_i \subset X$ is irreducible, each $Z_i$ is contained in an irreducible component $Y_j \subset W$. We may then define $X$ as the union of those $Y_j \subset W$ that contain some $Z_i$. 

To prove the claim, let $U \to \ca A_{g, \delta}$ be a finite \'etale cover of the moduli stack $\ca A_{g, \delta}$ and let $\ca X \to U$ be the pullback of the universal family of abelian varieties along $U \to \ca A_{g, \delta}$. This gives an abelian scheme $\ca X \times \wh{\ca X} \to U$ carrying a relative Poincar\'e line bundle $\ca P_{\ca X/U}$ and arguments similar to those used to prove Lemma \ref{lemma_algebraicityminimalclass} show that indeed, for each irreducible component $U' \subset U$, the locus in $U'(\CC)$ where $c_1(\ca P_A)^{2g-1}/(2g-1)!$ is algebraic is a countable union of closed algebraic subvarieties of $U'(\CC)$. 

Finally, Theorem \ref{maintheorem} implies that for each $[(A, \lambda)] \in X$, the integral Hodge conjecture for one-cycles holds for the abelian variety $A$, so we are done. 
\end{proof}

\begin{remark} \label{rem:dimensionimprovement}
Using level structures one can show that whenever $\gcd(\prod_i\delta_i, (2g-1)!) = 1$ (or, more generally, $\gcd(\prod_i\delta_i, (2g-2)!) = 1$, see Section \ref{sec:integralhodgeuptofactorn} below), there is a countable union $X = \cup_iZ_i \subset \msf A_{g, \delta}(\CC)$ as in Theorem \ref{introth:density} such that $\dim Z_i \geq 3g-3$. Indeed, let $\msf A_{g, \delta_g}^\ast$ be the moduli space of principally polarized abelian varieties of dimension $g$ with $\delta_g$-level structure. Then there is a natural morphism $\phi \colon \msf A_{g, \delta_g}^\ast \to \msf A_{g, \delta}$ such that for any $ x= [(A, \lambda)] \in \msf A_{g, \delta_g}^\ast(\CC)$ with $[(B, \mu)] = \phi(x) \in \msf A_{g, \delta}(\CC)$, there exists an isogeny $\alpha\colon A \to B$ of degree $\prod_{i = 1}^g \delta_i$, see \cite{mumfordmoduliofabelianvarieties}.
\end{remark}

\begin{remark}
In the principally polarized case, the density in the moduli space of those abelian varieties that satisfy the integral Hodge conjecture for one-cycles admits another proof which might be interesting for comparison. Let $\msf A_g$ be the coarse moduli space of principally polarized complex abelian varieties of dimension $g$ and let $[(A,\theta)]$ be a closed point of $\msf A_g$. Then by \cite[Exercise 5.6.(10)]{birkenhake}, the following are equivalent: (i) $A$ is isogenous to the $g$-fold self-product $E^g$ for an elliptic curve $E$ with complex multiplication, (ii) $A$ has maximal Picard rank $\rho(A) = g^2$, (iii) $A$ is \textit{isomorphic} to the product $E_1 \times \cdots \times E_g$ of pairwise isogenous elliptic curves $E_i$ with complex multiplication. If any of these conditions is satisfied, then $A$ satisfies the integral Hodge conjecture for one-cycles by Theorem \ref{introth:IHCforjacobians}. Moreover, the set of isomorphism classes of principally polarized abelian varieties $(A,\theta)$ for which this holds is dense in $\msf A_g$ by \cite{lange_modulprobleme}. For an explicit example in dimension $g = 4$ of a principally polarized abelian variety $(A, \theta)$ that satisfies one of the equivalent conditions above, but which is not isomorphic to a Jacobian, see \cite[\S 5]{debarre_annulation}. 
\end{remark}


\section{The integral Hodge conjecture for one-cycles up to factor $n$}\label{sec:integralhodgeuptofactorn}

In this section, we study a property of a smooth projective complex variety that lies somewhere in between the integral Hodge conjecture and the usual (i.e.\ rational) Hodge conjecture. The key will be the following:

\begin{definition}
Let $d,k,n \in \ZZ_{\geq 1}$ and let $X$ be a smooth projective variety over $\CC$ of dimension $d$. Recall the definition of the degree $2d-2k$ \textit{Voisin group} of $X$ \cite{voisinstablyrational, perry2020integral}:
    \[
    \rm Z^{2d-2k}(X) \coloneqq \textnormal{Hdg}^{2d-2k}(X, \ZZ)/ \rm H^{2d-2k}(X,\ZZ)_{\textnormal{alg}} = \Coker \left( \CH_k(X) \to \textnormal{Hdg}^{2d-2k}(X, \ZZ) \right).
    \]
 We say that \textit{$X$ satisfies the integral Hodge conjecture for $k$-cycles up to factor $n$} if $\rm Z^{2d-2k}(X)$ is annihilated by $n$ (in other words, if $n \cdot x \in \rm H^{2d-2k}(X,\ZZ)_{\textnormal{alg}}$ for every $x \in \textnormal{Hdg}^{2d-2k}(X, \ZZ)$). 
\end{definition}



\begin{lemma} \label{lemma:integralhodgeupton}
Let $A$ be a complex abelian variety of dimension $g$. 
\begin{enumerate}[wide, labelwidth=!, labelindent=0pt]
    \item  \label{firstiem}
Let $n$ be a positive integer and let $\ca F_n\colon \CH^1(\wh A) \to \CH_1(A)$ be a group homomorphism such that the following diagram commutes:
\[
\xymatrixcolsep{5pc}
\xymatrix{
\rm \CH^1(\wh A) \ar[d]\ar[r]^{\ca F_n} & \CH_1(A)\ar[d] \\
\rm H^2(\wh A,\ZZ)\ar[r]^{n \cdot \mr F_{\wh A} } & \rm H^{2g-2}( A, \ZZ).
}
\]
Then $A$ satisfies the integral Hodge conjecture for one-cycles up to factor $n$. 
\item \label{secondiem} Let $n \in \ZZ_{\geq 1}$ be such that $n \cdot c_1(\ca P_A)^{2g-2}/(2g-2)!$ is algebraic. Then a homomorphism $\ca F_n$ 
as in \ref{firstiem} exists. 
\end{enumerate}
\end{lemma}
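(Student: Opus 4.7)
Part (1) will be essentially formal once we unwind the definitions. Given $\beta \in \Hdg^{2g-2}(A,\ZZ)$, the plan is to use that the Fourier transform $\mr F_{\wh A}\colon \rm H^2(\wh A, \ZZ) \to \rm H^{2g-2}(A, \ZZ)$ is an isomorphism of Hodge structures, so it restricts to an isomorphism $\mr F_{\wh A}\colon \Hdg^2(\wh A, \ZZ) \xrightarrow{\sim} \Hdg^{2g-2}(A, \ZZ)$. Write $\beta = \mr F_{\wh A}(\alpha)$ with $\alpha \in \Hdg^2(\wh A, \ZZ)$; by the Lefschetz $(1,1)$-theorem, $\alpha = cl(D)$ for some $D \in \CH^1(\wh A)$. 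Commutativity of the diagram then yields $n \cdot \beta = n \cdot \mr F_{\wh A}(cl(D)) = cl(\ca F_n(D))$, so $n \cdot \beta$ is algebraic.

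For Part (2), the plan is to construct $\ca F_n$ explicitly as a correspondence. A direct degree count shows that the restriction of $\mr F_{\wh A}$ to $\rm H^2(\wh A,\ZZ)$ can be written as
\[
\mr F_{\wh A}(\alpha) = \pi_{A,\ast}\bigl(\sigma_A \cdot \pi_{\wh A}^\ast(\alpha)\bigr),
\]
because among the graded pieces of $\ch(\ca P_A) \in \rm H^\bullet(A \times \wh A, \ZZ)$, only the degree $4g-4$ component $\sigma_A = c_1(\ca P_A)^{2g-2}/(2g-2)!$ contributes to a map $\rm H^2(\wh A,\ZZ) \to \rm H^{2g-2}(A, \ZZ)$ (the projection $\pi_A\colon A \times \wh A \to A$ drops degree by $2g$). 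By hypothesis, $n \cdot \sigma_A$ is algebraic, so we can choose a lift $S \in \CH^{2g-2}(A \times \wh A) = \CH_2(A \times \wh A)$ with $cl(S) = n \cdot \sigma_A$, and define
\[
\ca F_n\colon \CH^1(\wh A) \to \CH_1(A), \qquad \ca F_n(D) = \pi_{A,\ast}\bigl(S \cdot \pi_{\wh A}^\ast(D)\bigr).
\]

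The commutativity of the required diagram then follows from the standard compatibility of cycle class maps with flat pullback, proper pushforward and intersection product: applying $cl$ to $\ca F_n(D)$ gives $\pi_{A,\ast}\bigl(cl(S) \cdot \pi_{\wh A}^\ast cl(D)\bigr) = n \cdot \mr F_{\wh A}(cl(D))$. The only real subtlety is the identification of the degree $(4g-4)$ piece of the Fourier kernel $\ch(\ca P_{\wh A})$ on $\wh A \times A$ with $\sigma_A$ on $A \times \wh A$, but this is immediate from the canonical identification $\ca P_{\wh A} \cong \tau^\ast \ca P_A$ under the swap $\tau\colon \wh A \times A \xrightarrow{\sim} A \times \wh A$, so there are no serious obstacles; the argument is a clean bookkeeping exercise on top of Lemma \ref{lemma:trivial} and Lefschetz $(1,1)$.
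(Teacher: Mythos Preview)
Your proof is correct and follows essentially the same approach as the paper. The only cosmetic difference is that you work on $A \times \wh A$ with $\sigma_A$ and the projections $\pi_A,\pi_{\wh A}$, whereas the paper first passes to $\wh A \times A$ via the swap, uses $\sigma_{\wh A}=c_1(\ca P_{\wh A})^{2g-2}/(2g-2)!$, and then runs the correspondence there; you already note this is handled by $\ca P_{\wh A}\cong \tau^\ast \ca P_A$.
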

\begin{proof}
Statement \ref{firstiem} follows immediately from the fact that $\CH^1(\wh A) \to \Hdg^2(\wh A,\ZZ)$ is surjective by Lefschetz $(1,1)$. To prove \ref{secondiem}, define $\sigma_A \in \rm H^{4g-4}(A \times \wh A,\ZZ)$ to be the class $c_1(\ca P_A)^{2g-2}/(2g-2)!$. First observe that if $\sigma_{\wh A} \coloneqq c_1(\ca P_{\wh A})^{2g-2}/(2g-2)! \in \rm H^{4g-4}(\wh A \times A,\ZZ)$, then $n \cdot \sigma_{\wh A}$ is algebraic since $n \cdot \sigma_A$ is. Let $\Sigma_n \in \CH_2(\wh A \times A)$ be such that $cl(\Sigma_n) = n \cdot \sigma_{\wh A}$. This gives a commutative diagram:
\begin{equation*} 
\xymatrixcolsep{5pc}
\xymatrix{
\CH^1(\wh A) \ar[d]^{cl}\ar[r]^{\pi_1^\ast}& \CH^1(\wh A \times A) \ar[d]^{cl}\ar[r]^{\cdot \Sigma_n}& \CH^{2g-1}(\wh A \times A) \ar[d]^{cl} \ar[r]^{\pi_{2,\ast}} & \CH_1( A) \ar[d]^{cl} \\
\rm H^2(\wh A,\ZZ)
\ar[r]^{\pi_1^\ast} & \rm H^{2}(\wh A \times A,\ZZ) \ar[r]^{\cdot n \cdot \sigma_{\wh A}} & \rm H^{4g-2}(\wh A \times  A,\ZZ) \ar[r]^{\pi_{2,\ast}} & \rm H^{2g-2}( A,\ZZ). 
}
\end{equation*}

Since $ \pi_{2,\ast} \circ \left((-)\cdot n \cdot \sigma_{\wh A}\right) \circ \pi_1^\ast = n \cdot \mr F_{\wh A}$, the homomorphism $\ca F_n \coloneqq \pi_{2,\ast} \circ \left((-)\cdot \Sigma_n\right) \cdot \pi_1^\ast$ has the required property.  \end{proof}

\begin{theorem} \label{theorem:integralhodgeuptofactor}
Consider a complex abelian variety $A$ of dimension $g$. 
\begin{enumerate}[wide, labelwidth=!, labelindent=0pt]
    \item \label{item:nphodgealg} 
    Let $n \in \ZZ_{\geq 1}$ be such that 
    $n \cdot c_1(\ca P_A)^{2g-1}/(2g-1)!$ 
    is algebraic. Then $n^2 \cdot c_1(\ca P_A)^{2g-2}/(2g-2)!$ is algebraic. In particular, $A$ satisfies the integral Hodge conjecture up to factor $\gcd(n^2, (2g-2)!)$ in this case. 
    \item \label{item:ppavnphodgealg} If $A$ is principally polarized, and $ n \in \ZZ_{\geq 1}$ is such that $n \cdot \gamma_\theta \in \textnormal{Hdg}^{2g-2}(A, \ZZ)$ is algebraic, then $n \cdot c_1(\ca P_A)^{2g-1}/(2g-1)!\in \textnormal{Hdg}^{4g-2}(A \times \wh A, \ZZ)$ is algebraic. 
    \item \label{item:ppavnphodgealg-last}
    We have that $A$ satisfies the integral Hodge conjecture for one-cycles up to factor $(2g-2)!$, and Prym varieties satisfy the integral Hodge conjecture for one-cycles up to factor $4$. 
\end{enumerate}
\end{theorem}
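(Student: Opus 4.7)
The plan is to prove the three parts in order, the main new ingredient being the combination of the divided power structure from Theorem \ref{th:PDstructure} with the identity $\ch(\ca P_A) = (-1)^g\cdot \rm E((-1)^g\cdot \mr R_A)$ provided by Lemma \ref{lemma:crucialprop}.

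For part \ref{item:nphodgealg}, I would first extract the codimension-$(2g-2)$ component of that identity. Since $\mr R_A^{\star m}$ lies in $\CH_m = \CH^{2g-m}$, only the term $m=2$ contributes to codimension $2g-2$, yielding
\[
\sigma_A \;=\; (-1)^g \cdot \frac{\mr R_A^{\star 2}}{2} \;\in\; \CH_2(A \times \wh A)_\QQ.
\]
Now choose a one-cycle $R_n \in \CH_1(A \times \wh A)$ lifting $n\cdot \rho_A$, and form $R_n^{[2]} \in \CH_2(A \times \wh A)$ using the PD structure of Theorem \ref{th:PDstructure}. The equation $2\,R_n^{[2]} = R_n^{\star 2}$ in $\CH_2$ forces $2\cdot cl(R_n^{[2]}) = cl(R_n)^{\star 2} = n^2 \cdot \rho_A^{\star 2}$ in integral cohomology, and the displayed identity makes the right-hand side equal to $2(-1)^g n^2 \sigma_A$ rationally. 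The difference $cl(R_n^{[2]}) - (-1)^g n^2 \sigma_A$ therefore lies in the $2$-torsion of $\rm H^{4g-4}(A \times \wh A, \ZZ)$, which vanishes because the integral cohomology of an abelian variety is torsion-free. Hence $n^2 \sigma_A$ is algebraic. Combining this with the tautological algebraicity of $(2g-2)!\cdot \sigma_A = c_1(\ca P_A)^{2g-2}$ and using B\'ezout, the class $\gcd(n^2,(2g-2)!)\cdot \sigma_A$ is algebraic, so the bound on the Voisin group follows from Lemma \ref{lemma:integralhodgeupton}.

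For part \ref{item:ppavnphodgealg}, I would invoke Lemma \ref{lemma:minimalclasspoincarecomparison}, which identifies (up to the sign $(-1)^{g+1}$) $\mr R_A$ with the $\ZZ$-linear combination $j_{1,\ast}\Gamma_\Theta + j_{2,\ast}\Gamma_{\wh \Theta} - (\id,\lambda)_\ast \Gamma_\Theta$; transporting this identity to integral cohomology (again using torsion-freeness) gives the analogous identity among $\rho_A$, $\gamma_\theta$ and $\gamma_{\wh\theta}$. Algebraicity of $n\cdot \gamma_\theta$ implies algebraicity of $n\cdot \gamma_{\wh \theta}$ via the principal polarization $\lambda$, so each pushforward is algebraic after multiplication by $n$, forcing $n\cdot \rho_A$ to be algebraic.

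For part \ref{item:ppavnphodgealg-last}, the first assertion is immediate from Lemma \ref{lemma:integralhodgeupton} applied with $n = (2g-2)!$, since $(2g-2)!\cdot \sigma_A = c_1(\ca P_A)^{2g-2}$ is manifestly algebraic. For Prym varieties, I would invoke the classical fact that $2\gamma_\theta$ is represented by the Abel--Prym curve, hence algebraic; part \ref{item:ppavnphodgealg} then yields algebraicity of $2\rho_A$, and part \ref{item:nphodgealg} applied with $n=2$ gives the integral Hodge conjecture for one-cycles up to the factor $\gcd(4,(2g-2)!)$, which in particular divides $4$. The main technical obstacle is the computation of $cl(R_n^{[2]})$ in part \ref{item:nphodgealg}: linking the divided power, which only exists integrally inside $(\CH,\star)$, to the rational identity $\rho_A^{\star 2}/2 = (-1)^g \sigma_A$ and ruling out $2$-torsion ambiguity; the torsion-freeness of the integral cohomology of $A \times \wh A$ is precisely what makes this identification clean.
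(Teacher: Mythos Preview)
Your proposal is correct and follows essentially the same route as the paper: you invoke Lemma~\ref{lemma:crucialprop} to obtain $\sigma_A = (-1)^g \rho_A^{\star 2}/2$, use the PD-structure of Theorem~\ref{th:PDstructure} to produce an integral lift of $n^2\sigma_A$, then combine with the tautological algebraicity of $(2g-2)!\,\sigma_A$ and Lemma~\ref{lemma:integralhodgeupton}; parts~\ref{item:ppavnphodgealg} and~\ref{item:ppavnphodgealg-last} likewise match the paper's appeals to Lemma~\ref{lemma:minimalclasspoincarecomparison} and the Abel--Prym curve. The one point you make explicit that the paper leaves implicit is the torsion-freeness argument identifying $cl(R_n^{[2]})$ with $(-1)^g n^2\sigma_A$; this is indeed how the passage from $2\,R_n^{[2]} = R_n^{\star 2}$ to the desired algebraicity is justified, and your handling of it is clean.
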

\begin{proof}
\ref{item:nphodgealg}. 
By Lemma \ref{lemma:crucialprop}, one has \[
c_1(\ca P_A)^{2g-2}/(2g-2)! = (-1)^g\cdot \left( c_1(\ca P_A)^{2g-1}/(2g-1)!\right)^{\star 2}/2! \in \rm H^{4g-4}(A \times \wh A,\ZZ). 
\]
By Theorem \ref{th:PDstructure}, this implies that if $n \cdot c_1(\ca P_A)^{2g-1}/(2g-1)!$ is algebraic, then also the element $n^2\cdot c_1(\ca P_A)^{2g-2}/(2g-2)!$ is algebraic. Since $(2g-2)! \cdot c_1(\ca P_A)^{2g-2}/(2g-2)!$ is algebraic, it follows that $\gcd(n^2, (2g-2)!) \cdot c_1(\ca P_A)^{2g-2}/(2g-2)!$ is algebraic. Thus we are done by Lemma \ref{lemma:integralhodgeupton}.

\ref{item:ppavnphodgealg}. This follows from Lemma \ref{lemma:minimalclasspoincarecomparison}.

\ref{item:ppavnphodgealg-last}. This follows from Lemma \ref{lemma:integralhodgeupton}, parts \ref{item:nphodgealg} and \ref{item:ppavnphodgealg} and the fact that if $A$ is a $g$-dimensional Prym variety with principal polarization $\theta \in \Hdg^2(A,\ZZ)$, then $2 \cdot \gamma_\theta \in \rm H^{2g-2}(A,\ZZ)$ is algebraic.  
\end{proof}


\section{The integral Tate conjecture for one-cycles on abelian varieties over the separable closure of a finitely generated field}

Let $X$ be a smooth projective variety over the separable closure $k$ of a finitely generated field. Let $k_0$ be a finitely generated field of definition of $X$. A class $u \in \rm H_{\textnormal{\'{e}t}}^{2i}(X, \ZZ_\ell(i))$ is an \textit{integral Tate class} if it is fixed by some open subgroup of $\Gal(k/k_0)$. Totaro has shown that for codimension-one cycles on $X$, the Tate conjecture over $k$ implies the integral Tate conjecture over $k$ \cite[Lemma 6.2]{totaroIHCthreefolds}. This means that every integral Tate class is the class of an algebraic cycle over $k$ with $\ZZ_\ell$-coefficients. 

Suppose that $A_{/k}$ is an abelian variety, defined over a finitely generated field $k_0 \subset k$ such that $k$ is the separable closure of $k_0$. Then the Tate conjecture for codimension-one cycles holds for $A$ over $k$ by results of Tate \cite{tate_endomorphisms}, Faltings \cite{Faltings1983, faltings_wustholz}, and Zarhin \cite{zarhin_one, zarhin_two}. By the above, $A$ satisfies the integral Tate conjecture for codimension-one cycles over $k$. On the other hand, the Fourier transform defines an isomorphism
\begin{equation} \label{eq:fourieretalecohomology}
\mr F_A \colon \rm H_{\textnormal{\'{e}t}}^2(A, \ZZ_\ell(1)) \xrightarrow{\sim} \rm H_{\textnormal{\'{e}t}}^{2g-2}(\wh A, \ZZ_\ell(g-1)),
\end{equation}
see \cite[Section 7]{totaroIHCthreefolds}. Since (\ref{eq:fourieretalecohomology}) is Galois-equivariant (the Poincar\'e bundle being defined over $k_0$) it sends integral Tate classes to integral Tate classes. Therefore, to prove the integral Tate conjecture for one-cycles on $A$, it suffices to lift (\ref{eq:fourieretalecohomology}) to a homomorphism $\CH^1(A)_{\ZZ_\ell} \to \CH_1(\wh A)_{\ZZ_\ell}$. 

\begin{proof}[Proof of Theorem \ref{introth:integraltate}]
This follows from the above together with Propositions \ref{prop:motivicetale-new} and
\ref{remarketalebetti}.\ref{remark3.12.2}.
\end{proof}
\noindent
For an abelian variety $X$ over the separable closure $k$ of a finitely generated field, call $\alpha \in \rm H^{2\bullet}_{\et}(X, \ZZ_\ell(\bullet))$ \emph{algebraic} if $\alpha$ is in the image of the cycle class map $\CH(X) \otimes \ZZ_\ell \to  \rm H^{2\bullet}_{\et}(X, \ZZ_\ell(\bullet))$. 

\begin{corollary}Let $A$ and $B$ be abelian varieties defined over the separable closure $k$ of a finitely generated field, of respective dimensions $g_A$ and $g_B$. 
\begin{enumerate}[wide, labelwidth=!, labelindent=0pt]
    \item \label{item:integraltateone} The classes $c_1(\ca P_A)^{2g_A-1}/(2g_A-1)!$ in $\rm H_{\textnormal{\'et}}^{4g_A-2}(A \times \wh A, \ZZ_\ell(2g_A-1))$ and $c_1(\ca P_B)^{2g_B-1}/(2g_B-1)! $ in $\rm H_{\textnormal{\'et}}^{4g_B-2}(B \times \wh B, \ZZ_\ell(2g_B-1))$ are algebraic if and only if $A \times \wh A$, $B \times \wh B$, $A\times B$ and $\wh A \times \wh B$ satisfy the integral Tate conjecture for one-cycles. 
    \item  \label{item:integraltatetwo} If $A$ and $B$ are principally polarized, then the integral Tate conjecture for one-cycles holds for $A\times B$ if and only if it holds for both $A$ and $B$. 
    \item \label{item:integraltatethree} Let $g = g_A$ and let $\theta \in \rm H^2_{\textnormal{\'et}}(A, \ZZ_\ell(1))$ be the first Chern class of an ample line bundle that induces a principal polarization on $A$. Suppose that $\theta^{g-1}/(g-1)! \in  \rm H_{\textnormal{\'{e}t}}^{2g-2}(A, \ZZ_\ell(g-1))$ is algebraic. Then for every algebraic cohomology class $\alpha \in   \oplus_{j > 0} \rm H_{\textnormal{\'{e}t}}^{2j}(A, \ZZ_\ell(j)) \subset \rm H_{\textnormal{\'{e}t}}^{2\bullet}(A, \ZZ_\ell(\bullet))$ and every $i \in \ZZ_{\geq 1}$, the cohomology class $\alpha^i/i! \in \rm H_{\textnormal{\'{e}t}}^{2\bullet}(A, \ZZ_\ell(\bullet))$ is algebraic. 
    \end{enumerate}
\end{corollary}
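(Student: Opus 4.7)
The plan is to transport the arguments of Corollary \ref{cor:IHCforproducts} and the final step of the proof of Theorem \ref{maintheorem} to the $\ell$-adic Tate setting, using Theorem \ref{introth:integraltate} and Proposition \ref{prop:motivicetale-new} (together with Proposition \ref{remarketalebetti}.\ref{remark3.12.2}) as the required $\ell$-adic substitutes. For part \ref{item:integraltateone}, I would first observe that $\rho_{\wh A}$ is the pullback of $\rho_A$ under the algebraic swap $\wh A \times A \cong A \times \wh A$, so the two classes are algebraic simultaneously. Reading Equation (\ref{eq1}) in $\ell$-adic cohomology, the class $\rho_{A \times \wh A}$ is a $\ZZ$-linear combination of exterior products of $\rho_A$ and $\rho_{\wh A}$ with classes of points, so algebraicity of $\rho_A$ implies algebraicity of $\rho_{A \times \wh A}$, and Theorem \ref{introth:integraltate} then yields the integral Tate conjecture for one-cycles on $A \times \wh A$. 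An analogous Künneth-type expansion of $\rho_{A\times B}$ and $\rho_{\wh A \times \wh B}$ in terms of $\rho_A,\rho_B$ and zero-cycles treats the remaining two factors. The converse direction is automatic: each $\rho_X$ is an integral Tate class on $X$, the Poincaré bundle being defined over a common finitely generated subfield $k_0 \subset k$, so if $X$ satisfies the integral Tate conjecture for one-cycles then $\rho_X$ is algebraic, and projecting back recovers algebraicity of $\rho_A$ and $\rho_B$.

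For part \ref{item:integraltatetwo}, the product polarization $\theta_{A \times B} = \pi_A^\ast \theta_A + \pi_B^\ast \theta_B$ satisfies, after binomial expansion and collapse for dimension reasons,
\begin{equation*}
\gamma_{\theta_{A \times B}} \;=\; \pi_A^\ast \gamma_{\theta_A} \cdot \pi_B^\ast [0]_B + \pi_A^\ast [0]_A \cdot \pi_B^\ast \gamma_{\theta_B},
\end{equation*}
so algebraicity of both minimal classes forces algebraicity of $\gamma_{\theta_{A \times B}}$, at which point Theorem \ref{introth:integraltate} applies. Conversely, pushforward along $\pi_A \colon A \times B \to A$ sends this class to $\gamma_{\theta_A}$ (the second summand dies by degree reasons, while the projection formula identifies $\pi_{A,\ast}\pi_B^\ast[0]_B = 1$), so algebraicity descends to each factor. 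For part \ref{item:integraltatethree}, I would repeat the final step of the proof of Theorem \ref{maintheorem}: Beauville's identity $\theta^i/i! = \gamma_\theta^{\star j}/j!$ with $i + j = g$ holds in $\rm H^{\bullet}_{\textnormal{\'et}}(A, \ZZ_\ell(\bullet))$, and by Theorem \ref{th:PDstructure} combined with Proposition \ref{remarketalebetti}.\ref{remark3.12.2} the ideal $\CH_{>0}(A)_{\ZZ_\ell} \subset (\CH(A)_{\ZZ_\ell}, \star)$ carries a canonical PD-structure; a lift $\beta \in \CH_1(A)_{\ZZ_\ell}$ of $\gamma_\theta$ then furnishes divided powers $\beta^{[j]}$ lifting $\gamma_\theta^{\star j}/j! = \theta^i/i!$.

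The main obstacle is essentially bookkeeping. At every step one has to verify that the relevant classes remain integral Tate classes, i.e.\ are fixed by an open subgroup of $\Gal(k/k_0)$ for a common finitely generated field of definition $k_0$; but this Galois-equivariance is automatic since the Poincaré bundle, the polarizations and all algebraic operations employed descend to $k_0$. The only genuinely $\ell$-adic input beyond what is already used in the complex-analytic proof of Theorem \ref{maintheorem} is the flat extension of the Pontryagin-product PD-structure from $\CH(A)$ to $\CH(A)_{\ZZ_\ell}$, which is recorded in Proposition \ref{remarketalebetti}.\ref{remark3.12.2}.
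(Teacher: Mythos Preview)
Your proposal is correct and follows essentially the same route as the paper's proof: the paper invokes Equation (\ref{eq1}) for part \ref{item:integraltateone}, the product decomposition of minimal classes for part \ref{item:integraltatetwo}, and Beauville's identity $\theta^i/i! = \gamma_\theta^{\star(g-i)}/(g-i)!$ for part \ref{item:integraltatethree}. You have simply unpacked these references with the appropriate $\ell$-adic justifications (Tate-class status of $\rho_X$, the PD-structure on $\CH_{>0}(A)_{\ZZ_\ell}$ via Proposition \ref{remarketalebetti}.\ref{remark3.12.2}), which is exactly what is implicit in the paper's terse proof.
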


\begin{proof}
\ref{item:integraltateone}. See Equation (\ref{eq1}). 

\ref{item:integraltatetwo}. This is true because the minimal cohomology class of the product is algebraic if and only if the minimal cohomology classes of the factors are algebraic.

\ref{item:integraltatethree}. This follows from Propositions \ref{prop:motivicetale-new} and \ref{remarketalebetti}.\ref{remark3.12.2}. 
\end{proof}
\noindent
Combining Theorems \ref{maintheorem} and \ref{introth:integraltate}, we obtain:

\begin{corollary} \label{hodgetatecomparison}
Let $A_K$ be a principally polarized abelian variety over a number field $K\subset \CC$ and let $A_\CC$ be its base change to $\CC$. Then $A_\CC$ satisfies the integral Hodge conjecture for one-cycles if and only if $A_{\bar K}$ satisfies the integral Tate conjecture for one-cycles over $\bar K = \bar \QQ$.
\end{corollary}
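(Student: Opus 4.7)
The plan is to deduce this corollary by reducing each of the two conjectures to the algebraicity of the same ``minimal'' cohomology class, and then bridging the gap between Betti/\'etale coefficients and between the base fields $\CC$ and $\bar K = \bar\QQ$.

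First I would apply Theorem \ref{maintheorem} to rewrite the hypothesis on $A_\CC$: since $A_\CC$ is principally polarized, IHC$_1$ for $A_\CC$ is equivalent to the algebraicity of $\gamma_\theta = \theta^{g-1}/(g-1)! \in \rm H^{2g-2}(A_\CC, \ZZ)$. Next I would apply Theorem \ref{introth:integraltate} (together with the ``in particular'' clause that handles primes $\ell > (g-1)!$ automatically, because $\theta^{g-1}$ is a divisor power and $(g-1)!$ is then a unit in $\ZZ_\ell$) to rewrite the hypothesis on $A_{\bar K}$: integral Tate for one-cycles is equivalent to the algebraicity of $\gamma_{\theta_\ell} \in \rm H^{2g-2}_{\textnormal{\'{e}t}}(A_{\bar K}, \ZZ_\ell(g-1))$ for every prime $\ell$.

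The next step is a base-change argument: under the smooth proper base change $\rm H^{2g-2}_{\textnormal{\'{e}t}}(A_{\bar K}, \ZZ_\ell(g-1)) \xrightarrow{\sim} \rm H^{2g-2}_{\textnormal{\'{e}t}}(A_\CC, \ZZ_\ell(g-1))$, compatible with cycle class maps, $\gamma_{\theta_\ell}$ on $A_{\bar K}$ corresponds to the $\ell$-adic incarnation of $\gamma_\theta$ on $A_\CC$. One direction of the comparison ``algebraic on $A_{\bar K}$ $\Leftrightarrow$ algebraic on $A_\CC$ with $\ZZ_\ell$-coefficients'' is immediate from pullback. The other direction uses a standard spreading-out: every cycle on $A_\CC$ is defined over a finitely generated extension $L$ of $\bar K$ inside $\CC$; spreading out over an integral $\bar K$-variety $\Spec R$ with fraction field $L$ and specializing at any $\bar K$-point of $\Spec R$ yields a cycle on $A_{\bar K}$ with the same $\ell$-adic class, since the specialization map on \'etale cohomology is an isomorphism.

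It remains to bridge $\ZZ$- versus $\ZZ_\ell$-algebraicity on $A_\CC$: I would show that $\gamma_\theta \in \rm H^{2g-2}(A_\CC,\ZZ)$ is algebraic iff, for every prime $\ell$, its image in $\rm H^{2g-2}(A_\CC,\ZZ_\ell)$ lies in the image of $\CH_1(A_\CC)\otimes\ZZ_\ell$. The key input is that the obstruction group
\[
V \coloneqq \Hdg^{2g-2}(A_\CC, \ZZ)/\rm H^{2g-2}(A_\CC,\ZZ)_{\textnormal{alg}}
\]
is \emph{finite}: it is finitely generated as a subquotient of $\rm H^{2g-2}(A_\CC,\ZZ)$, and $V\otimes\QQ = 0$ because the cohomological Fourier transform identifies $\Hdg^{2g-2}(A_\CC,\QQ)$ with $\Hdg^2(\wh A,\QQ) = \NS(\wh A)_\QQ$, which is spanned by divisor classes. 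A finite abelian group is the direct sum of its $\ell$-primary parts $V[\ell^\infty] = V\otimes \ZZ_\ell$, so the vanishing of $[\gamma_\theta]\in V\otimes\ZZ_\ell$ for all $\ell$ forces $[\gamma_\theta]=0$ in $V$, proving algebraicity.

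The main obstacle will be the base-change step: one has to carefully verify that a choice of algebraic representatives of $\gamma_\theta$ on $A_\CC$ can indeed be specialized to $A_{\bar K}$ preserving the $\ell$-adic class. This is essentially the content of a specialization argument similar to Lemma \ref{lemma_algebraicityminimalclass}, and it is what makes the implication from IHC$_1$ on $A_\CC$ to integral Tate on $A_{\bar K}$ nontrivial; the reverse implication is the cleaner one, being driven entirely by the finiteness of $V$.
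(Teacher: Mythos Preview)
Your proposal is correct and follows essentially the same route as the paper: reduce both conjectures to algebraicity of the minimal class via Theorems~\ref{maintheorem} and~\ref{introth:integraltate}, and then pass between $\bar K$ and $\CC$ and between $\ZZ$- and $\ZZ_\ell$-coefficients. The only cosmetic differences are that for Hodge $\Rightarrow$ Tate the paper phrases the spreading-out via $\bar\QQ$-points of the Hilbert scheme (and invokes Lemma~\ref{lemma_algebraicityminimalclass}) rather than in the language of specialization over $\Spec R$, and for Tate $\Rightarrow$ Hodge the paper kills all of $\rm Z^{2g-2}(A_\CC)\otimes\ZZ_\ell$ at once using Theorem~\ref{introth:integraltate}, whereas you track only the class of $\gamma_\theta$ and separately argue finiteness of $V$ via the rational Fourier transform; both arguments are valid and amount to the same thing.
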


\begin{proof}
We view $\bar \QQ$ as a subfield of $\CC$ in a way compatible with the inclusion $ K \hookrightarrow \CC$. For a prime number $\ell$, let $\theta_\ell \in \rm H_{\textnormal{\'et}}^{2}(A_{\bar \QQ}, \ZZ_\ell(1))$ be the $\ell$-adic \'etale cohomology class of the polarization of $A_{\bar \QQ}$. Similarly, define $\theta_\CC \in \NS(A_\CC) \subset \rm H^{2}(A_\CC, \ZZ)$ as the polarization of the complex abelian variety $A_\CC$. By Theorems \ref{maintheorem} and \ref{introth:integraltate}, it suffices to show that $\gamma_{\theta_\CC} \in \rm H^{2g-2}(A_\CC, \ZZ)$ is algebraic if and only if $\gamma_{\theta_\ell} \in \rm H_{\textnormal{\'et}}^{2g-2}(A_{\bar \QQ}, \ZZ_\ell(g-1))$ is in the image of (\ref{eq:integraltateconjecture}) for each prime number $\ell$. 

The Artin comparison theorem gives an isomorphism of $\ZZ_\ell$-algebras
\[
\phi\colon \rm H^{\bullet}_{\textnormal{\'et}}(A_{\bar \QQ}, \ZZ_\ell) = \rm H^{\bullet}_{\textnormal{\'et}}(A_{\CC}, \ZZ_\ell) \cong \rm H^{\bullet}(A_\CC, \ZZ) \otimes_\ZZ \ZZ_\ell.
\]
Since $\phi$ is compatible with the cycle class maps $cl_{\bar \QQ} \colon \CH(A_{\bar \QQ}) \to \rm H^{\bullet}_{\textnormal{\'et}}(A_{\bar \QQ}, \ZZ_\ell)$ and $cl_\CC \colon \CH(A_\CC) \to \rm H^{\bullet}(A_\CC, \ZZ)$, we have $\phi(\gamma_{\theta_\ell}) = \gamma_{\theta_\CC}$. 
Define $$\rm R^{2g-2}(A) = \Coker\left(\CH_1(A_\CC) \to \rm H^{2g-2}(A_\CC,\ZZ)\right).$$ Then $\rm R^{2g-2}(A)\otimes \ZZ_\ell = \Coker\left(\CH_1(A_\CC)_{\ZZ_\ell} \to \rm H^{2g-2}(A_\CC,\ZZ_\ell)\right)$. 
Suppose that $\gamma_{\theta_\ell}$ is in the image of (\ref{eq:integraltateconjecture}) for every prime number $\ell$. 
The image of $\gamma_{\theta_\CC}$ in $\rm R^{2g-2}(A)\otimes \ZZ_\ell$ is then zero for each prime $\ell$, which implies that the image of $\gamma_{\theta_\CC}$ in $\rm R^{2g-2}(A)$ is zero, i.e. $\gamma_{\theta_\CC}$ is algebraic. Conversely, suppose that $\gamma_{\theta_\CC} = \sum_{i= 1}^k n_i \cdot cl(C_i)$ for some smooth projective curves $C_i$ over $\CC$. 
The Hilbert scheme $\ca H = \textnormal{Hilb}_{A_K/K}$ is defined over $K$; 
for each $i = 1, \dotsc, k$ we pick a $\bar \QQ$-point in the connected component of $\ca H$ containing $[C_i \subset A]$. This gives smooth projective curves $C_i' \subset A_{\bar \QQ}$ over $\bar \QQ$. 
If $\Gamma = \sum_i n_i \cdot [C_i'] \in \CH_1(A_{\bar \QQ})$, then we have $cl_\CC(\Gamma_\CC) = \gamma_{\theta_\CC}$ by Lemma \ref{lemma_algebraicityminimalclass}, hence $cl_{\bar \QQ}(\Gamma) = \gamma_{\theta_\ell}$. 
\end{proof}

\noindent
Another corollary of Theorem \ref{introth:integraltate} is that the integral Tate conjecture for one-cycles on principally polarized abelian varieties is stable under specialization. Indeed, one has (c.f. Corollary \ref{complexspecialization}):

\begin{corollary} \label{cor:specialization}
Let $A_K$ be a principally polarized abelian variety over a number field $K$ and suppose that $A_{\bar K}$ satisfies the integral Tate conjecture for one-cycles over $\bar K$. 
Let $\mf p$ be a prime ideal of the ring of integers $\OO_K$ of $K$ at which $A_K$ has good reduction and write $\kappa = \OO_K/\mf p$. Then the abelian variety $A_{\bar \kappa}$ over $\bar \kappa$ satisfies the integral Tate conjecture for one-cycles over $\bar \kappa$. 
\end{corollary}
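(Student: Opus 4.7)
The plan is to verify the hypothesis of Theorem~\ref{introth:integraltate} for $A_{\bar \kappa}$: for each prime $\ell \neq \textnormal{char}(\kappa)$ with $\ell \leq (g-1)!$ (larger primes being handled automatically), I must show that the minimal class $\gamma_{\theta_\ell} \in \rm H^{2g-2}_{\textnormal{\'et}}(A_{\bar \kappa}, \ZZ_\ell(g-1))$ lies in the image of the cycle class map $\CH_1(A_{\bar \kappa})_{\ZZ_\ell} \to \rm H^{2g-2}_{\textnormal{\'et}}(A_{\bar \kappa}, \ZZ_\ell(g-1))$. Such a lift will be produced by spreading and specializing one that exists on the generic geometric fibre, following the same template as in the proof of Lemma~\ref{lemma_algebraicityminimalclass}.

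Fix such a prime $\ell$. Good reduction at $\mf p$ gives an abelian scheme $\ca A \to S \coloneqq \Spec \OO_{K,(\mf p)}$ with generic fibre $A_K$ and special fibre $A_\kappa$. Applying the N\'eron mapping property to $\wh{\ca A}$, the principal polarization $\lambda_K$ extends uniquely to a principal polarization $\lambda\colon \ca A \xrightarrow{\sim} \wh{\ca A}$ over $S$, so the polarization class $\theta_\ell \in \rm H^2_{\textnormal{\'et}}(\ca A, \ZZ_\ell(1))$ exists globally on $\ca A$. Since $\ca A \to S$ is smooth and proper, passing to a strict henselization of $S$ at $\mf p$ and invoking the proper and smooth base change theorems yields a specialization isomorphism $\rm H^{2g-2}_{\textnormal{\'et}}(A_{\bar K}, \ZZ_\ell(g-1)) \xrightarrow{\sim} \rm H^{2g-2}_{\textnormal{\'et}}(A_{\bar \kappa}, \ZZ_\ell(g-1))$ which sends $\gamma_{\theta_\ell}$ to $\gamma_{\theta_\ell}$, both being the divided power of the global class $\theta_\ell$.

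By the integral Tate conjecture for $A_{\bar K}$ combined with Theorem~\ref{introth:integraltate}, we may write $\gamma_{\theta_\ell} = \sum_i n_i \cdot cl(Z_i)$ in $\rm H^{2g-2}_{\textnormal{\'et}}(A_{\bar K}, \ZZ_\ell(g-1))$ for finitely many integral curves $Z_i \subset A_{\bar K}$ and coefficients $n_i \in \ZZ_\ell$. Each $Z_i$ is defined over some finite extension of $K$ in $\bar K$; let $L \subset \bar K$ be a common such finite extension and let $\OO_L$ denote the normalization of $\OO_{K,(\mf p)}$ in $L$. Taking the flat closure $\ca Z_i \subset \ca A \times_S \Spec \OO_L$ of each $Z_{i,L}$ and restricting to the special fibre produces one-cycles $\ca Z_{i, \bar \kappa}$ on $A_{\bar \kappa}$. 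The compatibility of the \'etale specialization map with the cycle class map \cite[Example~20.3.5]{fultonintersection} (already invoked in the proof of Lemma~\ref{lemma_algebraicityminimalclass}) then yields
\[
\gamma_{\theta_\ell} = \sum_i n_i \cdot cl(\ca Z_{i, \bar \kappa}) \quad \in \quad \rm H^{2g-2}_{\textnormal{\'et}}(A_{\bar \kappa}, \ZZ_\ell(g-1)),
\]
and an application of Theorem~\ref{introth:integraltate} to $A_{\bar \kappa}$ completes the argument.

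The main subtlety, which drives the whole argument, is that the principal polarization must extend globally to $\ca A$ so that the minimal classes on the two geometric fibres are identified under the \'etale specialization isomorphism; this is precisely where the N\'eron mapping property for $\wh{\ca A}$ enters. Everything else reduces to the routine spread-and-specialize mechanism, exactly as in Lemma~\ref{lemma_algebraicityminimalclass}.
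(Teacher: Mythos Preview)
Your proof is correct and follows essentially the same approach as the paper's: both extend the principal polarization over the integral model via the N\'eron mapping property, identify the minimal classes on the two geometric fibres under the \'etale specialization isomorphism, use the hypothesis to write $\gamma_{\theta_\ell}$ on $A_{\bar K}$ as a $\ZZ_\ell$-combination of curve classes, and then transport this identity to $A_{\bar\kappa}$ via \cite[Example~20.3.5]{fultonintersection} before concluding with Theorem~\ref{introth:integraltate}. The only cosmetic differences are that the paper works over the completion $\OO_{K_{\mf p}}$ rather than the localization $\OO_{K,(\mf p)}$, and packages the specialization step as a single commutative diagram rather than spelling out the spreading and flat-closure argument.
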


\begin{proof}
Write $S = \Spec \OO_K$ and let $A \to S$ be the N\'eron model of $A_K$. Let $R$ (resp.\ $K_{\mf p}$) be the completion of $\OO_K$ (resp.\ $K$) at the prime $\mf p$. The natural composition $K \to K_{\mf p} \to \bar K_{\mf p}$ induces an embedding $\bar K \to \bar K_{\mf p}$, where $\bar K_{\mf p}$ is an algebraic closure of $K_{\mf p}$. This gives a commutative diagram, where the square on the right is provided in \cite[Example 20.3.5]{fultonintersection}:
\begin{equation} \label{specializationdiagram}
\begin{split}
    \xymatrix{
\xymatrixcolsep{3pc}
\CH(A_{\bar K})_{\ZZ_\ell} \ar[r] \ar[d] & 
\CH(A_{\bar K_{\mf p}})_{\ZZ_\ell} \ar[r] \ar[d] &
\CH(A_{\bar \kappa})_{\ZZ_\ell} \ar[d]\\ 
\oplus_{r \geq 0}\rm H^{2r}_{\textnormal{\'et}}(A_{\bar K}, \ZZ_\ell(r))\ar[r]^{\sim} &\oplus_{r \geq 0}\rm H^{2r}_{\textnormal{\'et}}(A_{\bar K_{\mf p}}, \ZZ_\ell(r))\ar[r]^{\sim}&
\oplus_{r \geq 0}\rm H^{2r}_{\textnormal{\'et}}(A_{\bar \kappa}, \ZZ_\ell(r)).
}
\end{split}
\end{equation}
Now the principal polarization $\lambda_K\colon A_K \xrightarrow{\sim} \wh A_K$ extends uniquely to a homomorphism $\lambda\colon A \to \wh A$ by the N\'eron mapping property \cite[Section 1.2, Definition 1]{BLR} and since the same is true for the inverse $\lambda_K^{-1}\colon \wh A_K \xrightarrow{\sim} A_K$ we find that $\lambda$ is an isomorphism. In particular, we see that $A_{\bar \kappa}$ is principally polarized and that the class in $\CH^1(A_{\bar K})_{\ZZ_\ell}$ of a theta divisor on $A_{\bar K}$ is sent to the class in $\CH^1(A_{\bar \kappa})_{\ZZ_\ell}$ of a theta divisor on $A_{\bar \kappa}$. Thus, the minimal class $\gamma_{\theta_{\bar K}} \in \rm H^{2g-2}_{\textnormal{\'et}}(A_{\bar K}, \ZZ_\ell(g-1))$ is sent to the minimal class $\gamma_{\theta_{\bar \kappa}} \in \rm H^{2g-2}_{\textnormal{\'et}}(A_{\bar \kappa}, \ZZ_\ell(g-1))$ by the isomorphism on the bottom of Diagram (\ref{specializationdiagram}). It follows that $\gamma_{\theta_{\bar \kappa}}$ is algebraic which by Theorem \ref{introth:integraltate} means that we are done.  
\end{proof}

Finally, let us prove Theorem \ref{th:chinglichai}. The theorem follows from Theorem \ref{introth:integraltate} together with a result of Chai on the density of an ordinary isogeny class in positive characteristic \cite{chaiordinaryhecke}.

\begin{proof}[Proof of Theorem \ref{th:chinglichai}]
For any $t \in A_g(k)$, let $(A_t, \lambda_t)$ be a principally polarized abelian variety such that $[(A_t, \lambda_t)] = t$. Let $A = E_1 \times \cdots \times E_g$ be the product of $g$ ordinary elliptic curves $E_i$ over $k$ and provide $A$ with its natural principal polarization. Let $x \in \msf A_{g}(k)$ be the point corresponding to the isomorphism class of $A$. Let $q > (g-1)!$ be a prime number different from $p$ and let $\mr G_q(x) \subset \msf A_{g}( k)$ be the set of isomorphism classes $y = [(A_y,\lambda_y)]$ that admit an isogeny $\phi \colon A_y \to A_x$ with $\phi^\ast\lambda_x = q^N \cdot \lambda_y$ for some nonnegative integer $N$. We claim that $A_y$ satisfies the integral Tate conjecture for one-cycles over $k$ for any $y \in \mr G_q(x)$. Indeed, for such $y$ there exists a nonnegative integer $N$ such that the isogeny $[q^N]\colon A_y \to A_y$ factors through $A_x$. Consequently, 
$q^{(2g-2)\cdot N} \cdot \gamma_\theta$ is algebraic for the first Chern class $\theta$ of the principal polarization on $A_y$, which implies that $\gamma_\theta$ is algebraic (as $q > (g-1)!$). Thus, the claim follows from Theorem \ref{introth:integraltate}. Now $\mr G_q(z)$ is dense in $\msf A_g$ for any ordinary principally polarized abelian variety $(A_z,\lambda_z)$ by a result of Chai \cite[Theorem 2]{chaiordinaryhecke}. Therefore, $\mr G_q(x)$ is dense in $\msf A_g$ and the proof is finished.
\end{proof}

\printbibliography

\textsc{Thorsten Beckmann, Mathematical Institute of the University of Bonn, University of Bonn, Endenicher Allee 60, 53115 Bonn, Germany}\par\nopagebreak
  \textit{E-mail address:} \texttt{beckmann@math.uni-bonn.de}
\\
\\
\textsc{Olivier de Gaay Fortman, Institute of Algebraic Geometry, Leibniz University Hannover, Welfengarten 1, 30167 Hannover, Germany}\par\nopagebreak
  \textit{E-mail address:} \texttt{degaayfortman@math.uni-hannover.de}

\end{document}